\newtheorem{thm}{Theorem}[section]
\newtheorem{prop}[thm]{Proposition}
\newtheorem{cor}[thm]{Corollary}
\newtheorem{defn}[thm]{Definition}
\newtheorem{lem}[thm]{Lemma}
\newtheorem{conj}[thm]{Conjecture}
\newtheorem*{thmnn}{Theorem}
\begin{document}

\title{A Gross--Kohnen--Zagier Type Theorem for Higher-Codimensional Heegner
Cycles}

\author{Shaul Zemel\thanks{The initial stage of this research has been carried
out as part of my Ph.D. thesis work at the Hebrew University of Jerusalem,
Israel. The final stage of this work was carried out at TU Darmstadt and
supported by the Minerva Fellowship (Max-Planck-Gesellschaft).}}

\maketitle

\section*{Introduction}

A celebrated theorem of Gross, Kohnen, and Zagier in \cite{[GKZ]} states that
the Heegner divisors on modular curves correspond, in parts of the Jacobian
variety of the modular curve, to coefficients of a modular form of weight
$\frac{3}{2}$. This result was proved using height evaluations. Borcherds gave
in \cite{[B2]} another proof of this theorem, using his singular theta lift from \cite{[B1]}. The latter proof generalizes to Shimura curves, and the argument
extends to yield the modularity of Hirzebruch--Zagier divisors from \cite{[HZ]}
on Hilbert modular surfaces, of Humbert surfaces on Siegel threefolds, etc. The
goal of this paper is to establish a modularity result for cycles of higher
codimension inside the universal families arising from the moduli interpretation
of Shimura and modular curves.

Shimura curves parametrize Abelian surfaces with quaternionic multiplication.
This implies (under some technical assumptions) the existence of universal
families of Abelian surfaces over these curves. By taking the $m$th symmetric
power, we obtain a Kuga--Sato type variety, of dimension $2m+1$, in which the
fiber over a point in the Shimura curve is the product of $m$ copies of the
Abelian surface it represents. In correspondence with universal families of
elliptic curves, we denote this variety $W_{2m}$. The CM points on the Shimura
curve correspond to those Abelian surfaces whose endomorphism ring, or
equivalently whose group of cycles, is larger than the generic one. The $m$th
power of such an additional, normalized cycle has dimension $m$, and considering
it as a subvariety of the fiber of $W_{2m}$, it has codimension $m+1$ in the
latter variety. One defines the \emph{Heegner cycles} to be certain combinations
of these cycles in $W_{2m}$. As these cycles are cohomologically trivial, one
would like to investigate their images in the intermediate Jacobian of $W_{2m}$,
or in Hodge components of this Jacobian. There are several results indicating
the modularity of related objects in related groups (see, e.g., \cite{[Zh]} or
\cite{[H]}). This paper is proving another result of this type, namely
\begin{thmnn}
The images of the $(m+1)$-codimensional Heegner cycles in $W_{2m}$ correspond,
in an appropriate quotient group, to the coefficients of a modular form of
weight $\frac{3}{2}+m$.
\end{thmnn}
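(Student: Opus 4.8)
The plan is to follow the strategy of Borcherds in \cite{[B2]}, reducing the asserted modularity to a duality statement and then producing the necessary relations among the Heegner cycles by a regularized theta lift carrying the coefficient system $Sym^{2m}V_1$. First I would fix the lattice $L$ of signature $(2,1)$ attached to the modular (or Shimura) curve together with its Weil representation $\rho_L$, and make precise the ``appropriate quotient group'' $V$: the finite-dimensional $\mathbb{C}$-vector space obtained from the group generated by the codimension-$(m+1)$ CM cycle classes in the relevant Leray/Hodge piece of $H^{2m+1}(W_{2m})$ with coefficients in $Sym^{2m}V_1$, after killing the generic/tautological contributions and passing to rational (resp.\ homological) equivalence. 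The object to be shown modular is then the formal $q$-expansion $\Phi=\sum_{\mu,n} Y_{\mu,n}\,q^n e_\mu$ valued in $V$, indexed by $\mu\in L'/L$ and $n>0$ with $n\equiv -q(\mu)$.

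Next I would invoke the standard principle that, for finite-dimensional $V$, a $V$-valued formal series is a modular form of weight $\tfrac32+m$ and type $\rho_L$ if and only if $\lambda\circ\Phi$ is such a scalar-valued modular form for every $\lambda$ in the dual space $V^{*}$; this reduces the problem to scalar modularity of the numbers $\lambda(Y_{\mu,n})$. For the scalar statement I would use Serre duality for vector-valued modular forms, in the form used in \cite{[B2]}: a series $\sum a(\mu,n)q^n e_\mu$ is a holomorphic modular form of weight $\tfrac32+m$ and type $\rho_L$ precisely when $\sum_{\mu,n} c(\mu,-n)\,a(\mu,n)=0$ for every weakly holomorphic modular form $f=\sum c(\mu,n)q^n e_\mu$ of the dual weight $2-(\tfrac32+m)=\tfrac12-m$ and dual type $\rho_L^{*}$, the pairing being taken on principal parts. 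Everything thus comes down to establishing, for each such $f$, the single relation
\[
\sum_{\mu,\,n>0} c(\mu,-n)\,Y_{\mu,n}=0 \qquad\text{in } V.
\]

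Then the relation would be produced geometrically. I would apply the regularized (Borcherds/Kudla--Millson) theta lift to $f$ against a theta kernel twisted to take values in $Sym^{2m}V_1$ --- the twist being responsible both for the coefficient system and for raising the weight from $\tfrac32$ to $\tfrac32+m$ --- obtaining a current on $W_{2m}$ whose finite/cycle part is exactly $\sum_{\mu,n} c(\mu,-n)\,Y_{\mu,n}$. Because this current is a regularized lift of a (weakly) holomorphic input, its class is represented by a generic/tautological cycle together with a smooth Green-type contribution, both of which are annihilated in the quotient $V$; this yields the displayed vanishing, and, by the duality criterion above, the modularity of $\Phi$ of weight $\tfrac32+m$.

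I expect the main obstacle to be this last step: constructing the symmetric-power theta kernel explicitly, identifying the archimedean (Green current) and generic parts precisely enough to see that they die in $V$, and verifying that the relations coming from all weakly holomorphic $f$ of weight $\tfrac12-m$ exhaust the obstruction space $V^{*}$, so that they suffice to force modularity. Controlling the coefficient system $Sym^{2m}V_1$ throughout the Leray decomposition, and the bookkeeping of the half-integral weight together with the dual Weil representation $\rho_L^{*}$ for $m\geq 1$, is where the genuine work lies; the $m=0$ case should specialize to the classical Gross--Kohnen--Zagier theorem as reproved in \cite{[B2]}.
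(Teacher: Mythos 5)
Your skeleton---Serre duality against principal parts of weakly holomorphic forms of weight $\frac{1}{2}-m$, plus a polynomial-twisted regularized theta lift producing the relations---is indeed the strategy of the paper (following \cite{[B2]}), and your duality reduction is carried out there exactly as you describe. But there is a genuine gap at the decisive step, in two respects. First, the ``appropriate quotient group'': you define $V$ geometrically, as cycle classes modulo rational (or homological) equivalence in a piece of $H^{2m+1}(W_{2m})$, and then assert that the cycle part of the lift dies there. The paper proves no such thing: the quotient in Theorem \ref{main} is the Heegner class group $Heeg^{(m)}_{st}Cl(X(\Gamma))$, obtained by \emph{declaring} an $m$-divisor (strongly) principal when an \emph{algebraic} automorphic form with exactly the prescribed singularity structure exists (Definitions \ref{pringen} and \ref{prinmer}); whether these classes agree with images under an Abel--Jacobi-type map into a geometric quotient is discussed only speculatively in the final subsection. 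Second, and more fatally for your mechanism: for $m>0$ the lift $\frac{i^{m}}{2}\Phi_{L,m,m,0}(v,F)$ of $F=\delta_{1-\frac{b_{-}}{2}-m}^{m}f$ is \emph{not} a Green current in any closed or harmonic sense---it is an eigenfunction of $\Delta^{G}_{m}$ with nonzero eigenvalue $-2mb_{-}$ (Corollary \ref{DeltaGmPhi})---so ``smooth Green-type contribution annihilated in $V$'' has no content here. For $b_{-}=1$, a meromorphic object is recovered only after applying the weight raising operator $\delta_{2m}$, and showing that the resulting singularities are those of a meromorphic form requires the nontrivial combinatorial cancellation of Theorem \ref{b-=1mer} (via Lemma \ref{binompol}), which your sketch treats as automatic.

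The missing idea that the paper's definitional route makes indispensable, and which your proposal omits entirely, is the algebraicity constraint. By Proposition 3.11 of \cite{[BF]}, every principal part is realized by a harmonic weak Maass form $f$, whose lift has the same Laplace eigenvalue and the same singularities as a weakly holomorphic input---and for $b_{-}=1$ the non-holomorphic part of $f$ is invisible in the lift altogether. So if principality is defined merely by existence of an automorphic form with the prescribed singularities (which is all your vanishing argument would deliver), then \emph{every} Heegner $m$-divisor is principal, the class group collapses to zero, and the theorem becomes vacuous. The paper blocks this collapse by requiring the automorphic form to have algebraic Fourier coefficients, a property the lift inherits from weakly holomorphic $f$ with algebraic coefficients (Theorems \ref{PhiLmm0} and \ref{b-=1mer}) but conjecturally not from genuine harmonic weak Maass forms (Conjecture \ref{hwmftrans}). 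A minor further discrepancy: the coefficient system carried by the cycles is the full $Sym^{m}V_{2}$, not just $V_{2m}=Sym^{2m}V_{1}$ as in your kernel; the paper stresses that its statement is stronger in precisely this respect.
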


Before discussing the method of proof, we mention that the CM cycles are defined
in the modular case as actual cycles (normalized graphs of CM isogenies), but in
the general case only cohomology classes are considered (see, e.g., Section 5
of \cite{[Be]}) . A first aim of this paper is to describe the CM cycles also in
the non-split case as actual CM cycles inside the Abelian surface (see
Definition \ref{CMdef}). We then evaluate the fundamental cohomology classes of
these cycles explicitly, as well as simplify the proofs of several assertions
from \cite{[Be]}, using de-Rham cohomology. These CM cycles form, together with the generic cycles in case the quaternion algebra is trivial, all the Abelian subvarieties of Abelian surfaces with QM. However, the main result is proved for the corresponding cohomology classes, or Hodge vectors in variations of Hodge structures.

\medskip

Our method of proof follows \cite{[B2]} closely, but using the more general
theta lift from \cite{[B1]}, allowing the theta kernel of the even lattice $L$
of signature $(2,b_{-})$ to involve certain homogenous polynomials on
$L_{\mathbb{R}}$. The resulting functions are not meromorphic on the complex
manifold $G(L_{\mathbb{R}})$, but are eigenfunctions of eigenvalue $-2mb_{-}$
with respect to the weight $m$ Laplacian on $G(L_{\mathbb{R}})$, with known
singularities. In the case $b_{-}=1$ of Shimura and modular curves, the images
of these theta lifts under the weight raising operators are meromorphic (with
known poles). This establishes a new singular Shimura-type correspondence, as
stated in the following

\begin{thmnn}
Let $m$ be an even positive integer and let $f=\sum_{n}c_{n}q^{n}$ be a weakly
holomorphic modular form of weight $\frac{1}{2}-m$ with respect to $\Gamma_{0}(4)$ which lies in the Kohnen plus-space. Then the function
\[\frac{i^{m}}{2}\delta_{2m}\Phi_{L,m,m,0}(v,F)=\sum_{r=1}^{\infty}\bigg(\sum_{d|r}d^{m+1}c_{d^{2}}\bigg)r^{m}q^{r}\] is a meromorphic modular form of weight $2m+2$ with respect to $SL_{2}(\mathbb{Z})$. Its poles are all of order $m+1$, and they lie at CM points on the upper half-plane. More explicitly, $\frac{i^{m}}{2}\delta_{2m}\Phi_{L,m,m,0}(v,F)$ has a pole at a CM point of negative discriminant $-D$ if and only if the Fourier coefficient $c_{-D}$ of $f$ does not vanish.
\end{thmnn}

The method is developed here in a way aiming for proving similar results also
for higher-dimensional base spaces. Indeed, the subsequent work \cite{[Ze4]}
investigates the results thus obtained for the case $b_{-}=2$. It requires a
non-trivial analysis of the theta lift, which is based on the machinery from
\cite{[B1]}, but has to be examined explicitly since these particular functions
do not appear in that reference. The differential properties are established by
arguments similar to those appearing in \cite{[Bru]}. Additional useful
properties of our theta lifts are studied in \cite{[Ze2]}.

This paper is divided into 4 sections. In Section \ref{CMUF} we discuss
universal families over Shimura curves and the corresponding variations of Hodge
structures, as well as the CM cycles and their properties. In Section
\ref{RelDO} we derive a differential equation which is satisfied by Siegel theta
functions. Section \ref{Lift} presents the singular theta lift which we are
using, as well as its meromorphic image in the case $b_{-}=1$ (the singular
Shimura-type correspondence stated above). Finally, Section \ref{Rels} defines
the relations which we consider between the CM (or Heegner) cycles in question,
proves the main result of this paper, and suggests possible connections between
our results and existing theorems and conjectures.

\medskip

I am deeply indebted to my Ph.D. advisor R. Livn\'{e}, for proposing the
questions discussed here, for many enlightening discussions, and for being a
constant source of support during the work on this project. I would also like to
thank H. Farkas for his help during that time. Many thanks are due to J.
Bruinier for numerous comments and suggestions, which greatly improved this
paper relative to the original version. The help of S. Ehlen in the evaluation
and verification of several explicit examples of the singular Shimura-type lift
is also acknowledged. My special thanks are delivered to the anonymous referee,
whose suggestions greatly improved the readability of this paper.

\section{CM Cycles in Universal Families \label{CMUF}}

In this Section we present universal families of Abelian surfaces over Shimura
(and modular) curves, and the variations of Hodge structures arising from them.
We give the definition of general CM cycles inside such surfaces having also CM,
and show how to evaluate their fundamental cohomology classes.

\subsection{Variations of Hodge Structures over Shimura Curves}

Let $\mathcal{H}$ denote the upper half plane
$\{\tau=x+iy\in\mathbb{C}|\Im\tau>0\}$. The Shimura variation of Hodge structure
over $\mathcal{H}$, denoted $V_{1}$, is defined (see also Section 5 of
\cite{[Be]} and Section 3 of \cite{[Bry]}) as follows. The real local system is
just $\mathcal{H}\times\mathbb{R}^{2}$. Tensoring the fiber over
$\tau\in\mathcal{H}$ with $\mathbb{C}$, we fix $\binom{\tau}{1}$ to have Hodge
weight $(1,0)$ and $\binom{\overline{\tau}}{1}$ to have Hodge weight $(0,1)$
there. The determinant of $2\times2$ matrices defines a map
$\mathbb{R}^{2}\times\mathbb{R}^{2}\to\mathbb{R}$,
$(u,v)\mapsto\det\big((u,v)\big)$, which polarizes $V_{1}$. Let $V_{m}$ be be
the $m$th symmetric power of $V_{1}$, elements of whose fibers we write as
homogenous polynomials of degree $m$ in the fiber of $V_{1}$. Then the vector
$\binom{\tau}{1}^{m_{+}}\binom{\overline{\tau}}{1}^{m_{-}}$, with
$m_{+}+m_{-}=m$, has Hodge type $(m_{+},m_{-})$ in the complexification of the
fiber of $V_{m}$ over $\tau\in\mathcal{H}$.

Given $\tau\in\mathcal{H}$, we define \[M_{\tau}=\binom{\tau\ \ -\tau^{2}}{1 \ \
-\tau\ },\quad J_{\tau}=\frac{1}{y}\binom{x\ \ -|\tau|^{2}}{1\ \ \ -x\ \ },\quad
\mathrm{and}\quad M_{\overline{\tau}}=\binom{\overline{\tau}\ \
-\overline{\tau}^{2}}{1 \ \ -\overline{\tau}\ }.\] Recall that the action of
$A=\binom{a\ \ b}{c\ \ d} \in GL_{2}^{+}(\mathbb{R})$ sends $\tau\in\mathcal{H}$
to $A\tau=\frac{a\tau+b}{c\tau+d}$, with automorphy factor $j(A,\tau)=c\tau+d$.
We have the useful equation
\begin{equation}
A\binom{\tau}{1}=j(A,\tau)\binom{A\tau}{1}, \label{V1mod}
\end{equation}
as well as the relations
\begin{equation}
AM_{\tau}adjA=j(A,\tau)^{2}M_{A\tau}\quad\mathrm{and}\quad
AJ_{\tau}A^{-1}=J_{A\tau}. \label{Jtauconj}
\end{equation}
Here $adjA=\det A \cdot A^{-1}=\binom{\ \ d\ \ -b}{-c\ \ \ \ a}$ for $A$ as
above. More generally, Equation \eqref{V1mod} holds for every $A \in
M_{2}(\mathbb{R})$  and $\tau\in\mathbb{C}$ for which the natural extension of
$j(A,\tau)$ does not  vanish (so that $A\tau$ makes sense). In addition,
Equation \eqref{Jtauconj} extends to the case of $A \in GL_{2}(\mathbb{R})$ and
$\tau\in\mathcal{H}\cup\overline{\mathcal{H}}$ (where
$J_{\overline{\tau}}=-J_{\tau}$). We have
\begin{prop}
The following assertions are true:
\begin{enumerate}[$(i)$]
\item Multiplication of $\binom{\tau}{1}$ by any non-zero complex number can be
achieved by the action of $GL_{2}^{+}(\mathbb{R})$.
\item $V_{1}$, hence also $V_{m}$ for every $m$, is
$GL_{2}^{+}(\mathbb{R})$-equivariant. $SL_{2}(\mathbb{R})$ preserves the
polarization as well. Hence they descent to variations of Hodge structures on
quotients of $\mathcal{H}$ by Fuchsian groups.
\end{enumerate} \label{Vmequiv}
\end{prop}

\begin{proof}
Part $(i)$ follows by Equation \eqref{V1mod} from the fact that $M=dI+cJ_{\tau}$
satisfies $M\tau=\tau$ and $j(M,\tau)=ci+d$. Part $(ii)$ is an immediate
consequence of Equation \eqref{V1mod}. This proves the proposition.
\end{proof}

\medskip

Let $B$ be a \emph{rational quaternion algebra}, i.e., a 4-dimensional central
simple algebra over $\mathbb{Q}$, with reduced trace $Tr:B\to\mathbb{Q}$,
reduced norm $N:B^{\times}\to\mathbb{Q}^{\times}$, and main involution
$x\mapsto\overline{x}$. For background on (rational) quaternion algebras we
refer the reader to \cite{[Vi]}, for example. Assume that $B$ splits over
$\mathbb{R}$, and fix an isomorphism
$i:B_{\mathbb{R}}\stackrel{\sim}{\to}M_{2}(\mathbb{R})$. We identify
$B_{\mathbb{R}}$ with $M_{2}(\mathbb{R})$ via $i$, hence omit $i$ from the
notation. Recall that an \emph{ideal} in a rational quaternion algebra is a
finitely generated subgroup of full rank in $B$, and an \emph{order} is an
ideal which is a subring of $B$. An order is \emph{maximal} if it is not
contained in any larger order in $B$, and is \emph{Eichler} if it is the
intersection of two maximal orders. Given an order $\mathcal{M}$ in $B$, we
consider Abelian surfaces $A$ with quaternionic multiplication (QM) from
$\mathcal{M}$. To be precise, the latter statement means not only that
$\mathcal{M}$ is embedded into $A$, but also that the intersection of $End(A)$
with $B \subseteq End(A)_{\mathbb{Q}}$ is $\mathcal{M}$ (i.e., if
$\widetilde{\mathcal{M}}$ is a larger order in $B$ which is embedded into
$End(A)$ we say that $A$ has QM from $\widetilde{\mathcal{M}}$, and \emph{not}
from $\mathcal{M}$). As an example of such a surface, take
$\tau\in\mathcal{H}\cup\overline{\mathcal{H}}$ and an ideal $I \subseteq B$ such
that the order $L(I)=\{x \in B|xI \subseteq I\}$ is equal to $\mathcal{M}$, and
define $A_{\tau}=\mathbb{C}^{2}/I\binom{\tau}{1}$. $\mathcal{M}$ embeds into
$End(A_{\tau})$ by identifying $\mathbb{C}^{2}$ with
$M_{2}(\mathbb{R})\binom{\tau}{1}$ and letting $\mathcal{M}$ operate on $I$ and
$M_{2}(\mathbb{R})$. The \emph{Shimura curve} associated with $I$ is the
quotient of $\mathcal{H}$ by the group $R(I)^{\times}_{+}$ of (invertible) elements of reduced norm 1 in the order $R(I)=\{x \in B|Ix \subseteq I\}$. We now have
\begin{lem}
Let $\tau$ and $\sigma$ be elements of $\mathcal{H}\cup\overline{\mathcal{H}}$,
and let $I$ and $J$ be ideals in $B$ such that $L(I)=L(J)=\mathcal{M}$. Then the
maps $\mathbb{C}^{2}/J\binom{\sigma}{1}\to\mathbb{C}^{2}/I\binom{\tau}{1}$ which
commute with the action of $\mathcal{M}$ are in one-to-one correspondence with
elements $M \in B$ satisfying $M\tau=\sigma$ and $JM \subseteq I$. Every such
non-zero map is an isogeny, and it is an isomorphism if and only if $JM=I$.
\label{mapsAtau}
\end{lem}

\begin{proof}
Any map between these two Abelian surfaces gives a linear map from
$\mathbb{C}^{2}=M_{2}(\mathbb{R})\binom{\sigma}{1}$ to
$\mathbb{C}^{2}=M_{2}(\mathbb{R})\binom{\tau}{1}$ between the tangent spaces.
Commutation with the (left) action of $\mathcal{M}$ means that this map is
described by right multiplication on $M_{2}(\mathbb{R})$ by some matrix $M$.
Considering the $H_{1}$ groups shows that $M$ must satisfy $JM \subseteq I$. In
particular $M \in B$. On the complex vector spaces, the only maps which
commute with $\mathcal{M}$ are multiplication by scalars, which yields the
equality $M\binom{\tau}{1}=j\binom{\sigma}{1}$. If $j=0$ then $M=0$ and we are
done. Otherwise Equation \eqref{V1mod} yields $\sigma=M\tau$ (and
$j=j(M,\tau)$). This implies $M \in B^{\times}$ (for $M\neq0$), so that a
certain integral multiple of $M^{-1}$ yields a map in the other direction. As
one may use $M^{-1}$ itself for the map
$\mathbb{C}^{2}/I\binom{\tau}{1}\to\mathbb{C}^{2}/J\binom{\sigma}{1}$ if and
only if $JM=I$, this completes the proof of the lemma.
\end{proof}

The geometric interpretation of Shimura curves is given in

\begin{prop}
The moduli space of Abelian surfaces having QM from $\mathcal{M}$ is a disjoint
finite union of Shimura curves. If $\mathcal{M}$ is an Eichler order, this space is a single Shimura curve. \label{modspAVQM}
\end{prop}

\begin{proof}
As a real manifold, an Abelian surface $A$ with QM from $\mathcal{M}$ is
$V/\Lambda$ with $\Lambda=H_{1}(A,\mathbb{Z})$ and
$V=T_{0}(A)=\Lambda_{\mathbb{R}}$. Both $\Lambda$ and $V$ are
$\mathcal{M}$-modules. As $B$ has exactly one module of every
$\mathbb{Q}$-dimension divisible by 4, $\Lambda_{\mathbb{Q}}$ may be identified
with $B$ (hence $V$ with $M_{2}(\mathbb{R})$), and $\Lambda$ with some ideal $I
\subseteq B$ such that $L(I)=\mathcal{M}$. The complex structure commutes with
the action of $\mathcal{M}$, hence is defined through right multiplication by a
matrix in $M_{2}(\mathbb{R})$. This matrix must thus be of the form $J_{\tau}$
for some $\tau\in\mathcal{H}\cup\overline{\mathcal{H}}$. Identifying elements of
the holomorphic tangent space of $A$ with their first column multiplied by $iy$
shows that $A$ is isomorphic to $A_{\tau}$ defined above. Now, Lemma
\ref{mapsAtau} yields an isomorphism
$\mathbb{C}^{2}/IM^{-1}\binom{M\tau}{1}\to\mathbb{C}^{2}/I\binom{\tau}{1}$ for
any $M \in B^{\times}$. Applying this to $M$ with $N(M)<0$ shows that every
$A_{\tau}$ with $\tau\in\overline{\mathcal{H}}$ is isomorphic to some
$A_{\sigma}$ with $\sigma\in\mathcal{H}$ (and perhaps a different ideal). Hence
we may restrict attention to indices from $\mathcal{H}$ alone. The same
argument shows that we may also assume that the ideal $I$ belongs to a set of
representatives for the classes of ideals $I$ with $L(I)=\mathcal{M}$ modulo
right multiplication from $B^{\times}$. There are finitely many such classes, and if $\mathcal{M}$ is an Eichler order then there is only one such class (see
\cite{[Vi]}). In addition, for fixed $I$, Lemma \ref{mapsAtau} shows that if
$\tau$ and $\sigma$ are in $\mathcal{H}$ then $A_{\tau} \cong A_{\sigma}$ if and
only if $\sigma=M\tau$ for $M \in R(I)^{\times}$. Such $M$ must also have
positive reduced norm since $\tau$ and $M\tau$ are both in $\mathcal{H}$. This
proves the proposition.
\end{proof}

\smallskip

Let $\pi:\mathcal{A}\to\mathcal{H}$ be the universal Abelian surface with QM
from the order $\mathcal{M}$ (say with ideal $I$) as in, e.g., Section 4 of
\cite{[Be]}. This means that the fiber $\pi^{-1}(\tau)$ is $A_{\tau}$. Let
$\Gamma \subseteq R(I)^{\times}_{+}$ be of finite index, and let $X(\Gamma)$ be the associated Shimura curve. This is just $\Gamma\backslash\mathcal{H}$ if $B$ is not split. The map in which $M\in\Gamma$ takes $v+I\binom{\tau}{1}\big|_{\tau}$ (with $v\in\mathbb{C}^{2}$) to
$\frac{v}{j(M,\tau)}+I\binom{M\tau}{1}\big|_{M\tau}$ is a well-defined action of
$\Gamma$ on $\mathcal{A}$ over $\mathcal{H}$ (see the proof of Lemma
\ref{mapsAtau}), yielding a universal Abelian surface with QM over $X(\Gamma)$
(at least when $\Gamma$ has no elements with fixed points in $\mathcal{H}$), as
well as variations of Hodge structures on $X(\Gamma)$, which we describe below.
As such a universal family is the quotient of $\mathcal{H}\times\mathbb{C}^{2}$
by the semi-direct product of $\Gamma$ and $I$, this suggests the existence of a
theory of Jacobi-like forms on $\mathcal{H}\times\mathbb{C}^{2}$ in which
$\Gamma$ is the group acting on $\mathcal{H}$.

\smallskip

Let $I$ be an ideal in $B$, with discriminant $disc(I)$, and consider the
corresponding Abelian surface $A_{\tau}$ with $\tau\in\mathcal{H}$. As
$H_{1}(A_{\tau},\mathbb{Z})=I$, the group $H^{2}(A_{\tau},\mathbb{Q})$ is
identified with as the space of alternating rational-valued bilinear maps on
$I_{\mathbb{Q}}=B$. For an element $b$ in the space $B_{0}$ of traceless
elements of $B$, \cite{[Be]} defines two elements of $H^{2}(A,\mathbb{Q})$ by
\[\iota(b):(x,y) \mapsto Tr(bx\overline{y}),\qquad\tilde{\iota}(b):(x,y) \mapsto
Tr(b\overline{x}y)\] (with $x$ and $y$ in $I$, or in $B$). We quote Theorem 3.10
of \cite{[Be]}:
\begin{thm}
The following assertions hold:
\begin{enumerate}[$(i)$]
\item We have
$H^{2}(A_{\tau},\mathbb{Q})=\iota(B_{0})\oplus\tilde{\iota}(B_{0})$, an
orthogonal direct sum with respect to the cup product.
\item Given $b$ and $c$ in $B_{0}$, the pairing of $\iota(b)$ and $\iota(c)$ is
$disc(I)Tr(b\overline{c})$, while that of $\tilde{\iota}(b)$ and
$\tilde{\iota}(c)$ to $-disc(I)Tr(b\overline{c})$.
\item $\iota(B_{0}) \subseteq H^{1,1}(A_{\tau})$, while
$\tilde{\iota}(B_{0})_{\mathbb{C}}$ is isomorphic to the fiber of $V_{2}$ over
$\tau$.
\item For $a\in\mathcal{M} \subseteq End(A_{\tau})$ we have
$a^{*}\iota(b)=\iota(ab\overline{a})$ and
$a^{*}\tilde{\iota}(b)=N(a)\tilde{\iota}(b)$.
\item Let $\tau$, $\sigma$, $I$, $J$, and $M$ as in Lemma \ref{mapsAtau} be
given. Then the equalities $M^{*}\iota(b)=N(M)\iota(b)$ and
$M^{*}\tilde{\iota}(b)=\tilde{\iota}(Mb\overline{M})$ hold.
\end{enumerate} \label{H2Aprop}
\end{thm}
Part $(iii)$ of Theorem \ref{H2Aprop} also includes assertions from Theorem 5.8
and Proposition 5.12 of \cite{[Be]}. Part $(v)$ is a generalization of part (6)
of Theorem 3.10 of \cite{[Be]}, but the (straightforward) proof extends to this
case. We shall later consider these cohomology classes in the de-Rham setting,
which will also simplify the proofs of the first three parts of Theorem
\ref{H2Aprop}. Theorem \ref{H2Aprop} also has the following
\begin{cor}
Let $\tau$, $I$, and $A_{\tau}$ be as above.
\begin{enumerate}[$(i)$]
\item $\iota$ embeds $\Lambda=\big\{b \in
B_{0}\big|\iota(b)(I,I)\subseteq\mathbb{Z}\big\}$ into the N\'{e}ron-Severi
group $NS(A_{\tau})$ of $A_{\tau}$.
\item $\tilde{\Lambda}=\big\{b \in
B_{0}\big|\tilde{\iota}(b)(I,I)\subseteq\mathbb{Z}\big\}$ is an even lattice of
signature $(2,1)$.
\item Any element of the form $\frac{J_{\sigma}}{\sqrt{disc(I)}} \in B$, with
$\sigma\in\mathcal{H}$, yields via $\iota$ a principal polarization of
$A_{\tau}$.
\item All the possible polarizations are related by the action of
$\mathcal{M}$.
\end{enumerate}\label{H2lat}
\end{cor}

\begin{proof}
Part $(i)$ follows from parts $(iii)$ and $(v)$ of Theorem \ref{H2Aprop}
together with the Lefschetz Theorem on $(1,1)$ classes. Part $(ii)$ is a
consequence of part $(ii)$ of Theorem \ref{H2Aprop}. For part $(iii)$ we
apply parts $(ii)$ and $(iii)$ of Theorem \ref{H2Aprop} and the positivity
condition $-Tr(vxJ\overline{x})>0$ for $0 \neq x \in B$. Part $(iv)$ follows
from part $(iv)$ of that theorem. This proves the corollary.
\end{proof}

Part $(iii)$ of Corollary \ref{H2lat} justifies, a fortiori, our reference to
$A_{\tau}$ as an Abelian surface (rather than just a complex torus). On the
other hand, part $(iv)$ shows that the polarization cannot be characterized
further than coming from $\iota(\Lambda)$.

Consider now the universal family $\pi:\mathcal{A}\to\mathcal{H}$, as well as
the ones over $X(\Gamma)$ for $\Gamma$ without fixed points. Theorem
\ref{H2Aprop} then yields
\begin{cor}
The variation of Hodge structure $R^{2}\pi_{*}\mathbb{C}$ is the orthogonal
direct sum of two variations of Hodge structures. One is the constant variation
of Hodge structure $(B_{0})_{\mathbb{C}}[1]$. The other one is isomorphic to
$V_{2}$, but with the polarization multiplied by $disc(I)$. $\iota$ embeds the
lattice $\Lambda$ also into $NS(\mathcal{A})$. \label{R2pi*C}
\end{cor}

In the case where $B=M_{2}(\mathbb{Q})$ and $\mathcal{M}=I=M_{2}(\mathbb{Z})$,
Corollary \ref{R2pi*C} is already established in Section 3 of \cite{[Bry]}.
Indeed, we have $A_{\tau}=E_{\tau} \times E_{\tau}$ with
$E_{\tau}=\mathbb{C}/(\mathbb{Z}\tau\oplus\mathbb{Z})$, and
$\mathcal{A}$ is $\mathcal{E}\times_{\mathcal{H}}\mathcal{E}$ for the universal
elliptic curve $\pi^{\mathcal{E}}:\mathcal{E}\to\mathcal{H}$. The group
$H^{2}(E_{\tau} \times E_{\tau})$ decomposes as \[(H^{2}(E_{\tau}) \otimes
H^{0}(E_{\tau}))\oplus(H^{0}(E_{\tau}) \otimes
H^{2}(E_{\tau}))\oplus\bigwedge^{2}H^{1}(E_{\tau}) \oplus
Sym^{2}H^{1}(E_{\tau})\] (K\"{u}nneth and the action of $S_{2}$ on
$H^{1}(E_{\tau})^{\otimes2}$), where the lattice $\Lambda$ from part $(i)$ of
Corollary \ref{H2lat} is the direct sum of the first three terms. The lattice
$\tilde{\Lambda}$ from part $(ii)$ of that Corollary is the last term. Since
$V_{1}=R^{1}\pi^{\mathcal{E}}_{*}\mathbb{C}$, it follows that
\[R^{2}\pi_{*}\mathbb{C}=(R^{2}\pi^{\mathcal{E}}_{*}\mathbb{C} \otimes
R^{0}\pi^{\mathcal{E}}_{*}\mathbb{C})\oplus(R^{0}\pi^{\mathcal{E}}_{*}\mathbb{C}
\otimes R^{2}\pi^{\mathcal{E}}_{*}\mathbb{C})\oplus\bigwedge^{2}V_{1} \oplus
V_{2},\] with the first three variations of Hodge structures being constant of
type $(1,1)$. The cycles from Corollaries \ref{H2lat} and \ref{R2pi*C} which are
associated with the decompositions are the axes $\{0\} \times E_{\tau}$ and
$E_{\tau}\times\{0\}$ (or their unions $\bigcup_{\tau\in\mathcal{H}}\{0\} \times
E_{\tau}\big|_{\tau}$ and
$\bigcup_{\tau\in\mathcal{H}}E_{\tau}\times\{0\}\big|_{\tau}$ inside the
universal family $\mathcal{E}\times_{\mathcal{H}}\mathcal{E}$) and the
normalized diagonal $\widetilde{\Delta}_{E_{\tau}}=\Delta_{E_{\tau}}-\{0\}
\times E_{\tau}-E_{\tau}\times\{0\}$ (or
$\bigcup_{\tau\in\mathcal{H}}\widetilde{\Delta}_{E_{\tau}}\big|_{\tau}
\subseteq\mathcal{E}\times_{\mathcal{H}}\mathcal{E}$), where $\Delta_{E_{\tau}}$
is the imae of diagonal embedding of $E_{\tau}$ into $A_{\tau}$. More generally,
if $\mathcal{M}$ (and $I$) is the classical Eichler order of level $N$ in
$M_{2}(\mathbb{Q})$, so that $\Gamma=\Gamma_{0}(N)$, then the Abelian surface
$A_{\tau}$ is $E_{\tau} \times E_{N\tau}$. Then the first axis is $\{0\} \times
E_{N\tau}$, and $\widetilde{\Delta}_{E_{\tau}}$ is replaced by the normalized
graph of the classical isogeny $E_{\tau} \to E_{N\tau}$ descending from
multiplication by $N$ on $\mathbb{C}$.

\subsection{De-Rham Cohomology Classes}

The complex multiplication (CM) cycles inside Abelian surfaces with QM are
defined in Section 5 of \cite{[Be]} as certain cohomology classes. We shall
later define them as actual cycles, generalizing the classical graphs of CM
endomorphisms from the split case. Evaluating the cohomology classes of our
cycles is most easily carried out in the de-Rham setting, since it has the
advantage that the calculation is independent of the choice of quaternion
algebra $B$ with which we work. This is so, since $H^{k}_{dR}(A_{\tau}) \cong
H^{k}(A_{\tau},\mathbb{R})$ is the space of algebraic $k$-forms on the space
$T_{0}(A_{\tau})=M_{2}(\mathbb{R})$, which is independent of $B$. The Hodge
decomposition of the complexification of this space is also independent of $B$.
This approach also has the advantage of extending to some base spaces of higher
dimensions (see \cite{[Ze4]} for more details). Now, writing matrices in
$M_{2}(\mathbb{R})$ as $\binom{a\ \ b}{c\ \ d}$ defines $a$, $b$, $c$, and $d$
as algebraic 1-forms on $M_{2}(\mathbb{R})$, and we have
\begin{lem}
Extending scalars to $\mathbb{R}$, the maps $\iota$ and $\tilde{\iota}$ on
$(B_{0})_{\mathbb{R}}=M_{2}(\mathbb{R})_{0}$ may be evaluated by the equalities
\[\iota\binom{0\ \ -1}{0\ \ \ \ \ 0}=d \wedge c,\quad\iota\binom{1\ \ \ \ \
0}{0\ \ -1}=a \wedge d-b \wedge c,\quad\iota\binom{0\ \ \ 0}{1\ \ \ 0}=b \wedge
a,\] \[\tilde{\iota}\binom{0\ \ -1}{0\ \ \ \ \ 0}=c \wedge
a,\quad\tilde{\iota}\binom{1\ \ \ \ \ 0}{0\ \ -1}=d \wedge a+c \wedge
b,\quad\tilde{\iota}\binom{0\ \ \ 0}{1\ \ \ 0}=d \wedge b.\] \label{dReval}
\end{lem}

The proof is straightforward and simple. For the cup product, we prove
\begin{lem}
The form $a \wedge b \wedge c \wedge d=b \wedge a \wedge d \wedge c=d \wedge c
\wedge b \wedge a$ on $A_{\tau}$ is oriented. If $A_{\tau}$ is defined by the
ideal $I$ in $B$ then the integral of this form over $A_{\tau}$ equals
$disc(I)$. \label{volume}
\end{lem}

\begin{proof}
The assertion about orientation is immediate. For the integral, we first observe
that for $I=M_{2}(\mathbb{Z})$ it indeed equals 1. Moreover, $M_{2}(\mathbb{Z})$
is self-dual with respect to the pairing $(x,y) \mapsto Tr(x\overline{y})$ on
$M_{2}(\mathbb{R})$. For the general case, let $T \in M_{4}(\mathbb{R})$ be the
matrix representing a basis of $I$ over $\mathbb{Z}$ by $a$, $b$, $c$, and $d$.
The dual basis for $I^{*}=\big\{x \in
M_{2}(\mathbb{R})\big|(x,I)\subseteq\mathbb{Z}\big\}$ is represented by
$(T^{t})^{-1}$, so that the definition of the discriminant yields
$disc(I)^{2}=\det T/\det(T^{t})^{-1}=(\det T)^{2}$. As the integral in question
equals $|\det T|$, the lemma follows.
\end{proof}

Tensoring with $\mathbb{C}$ we obtain the Hodge types, which are determined by
the following
\begin{lem}
If $dz_{1}$ and $dz_{2}$ are the holomorphic 1-forms on the holomorphic tangent
space $\mathbb{C}^{2}$ of $A_{\tau}$ defined by writing its elements as
$\binom{z_{1}}{z_{2}}$, then the equalities \[d \wedge c=\frac{dz_{2} \wedge
d\overline{z}_{2}}{-2iy},\quad b \wedge a=\frac{dz_{1} \wedge
d\overline{z}_{1}}{-2iy},\quad a \wedge d-b \wedge c=\frac{dz_{2} \wedge
d\overline{z}_{1}+dz_{1} \wedge d\overline{z}_{2}}{2iy},\] \[dz_{2} \wedge
dz_{1}=\tilde{\iota}(M_{\tau}),\quad d\overline{z}_{2} \wedge
d\overline{z}_{1}=\tilde{\iota}(M_{\overline{\tau}}),\quad\mathrm{and}\quad
\frac{dz_{2} \wedge d\overline{z}_{1}-dz_{1} \wedge
d\overline{z}_{2}}{2}=\tilde{\iota}(yJ_{\tau})\] hold. \label{abcdz1z2}
\end{lem}

\begin{proof}
The identification $\mathbb{C}^{2}=M_{2}(\mathbb{R})\binom{\tau}{1}$ gives
\[dz_{1}=a\tau+b,\quad dz_{2}=c\tau+d,\quad
d\overline{z}_{1}=a\overline{\tau}+b,\quad\mathrm{and}\quad
d\overline{z}_{2}=c\overline{\tau}+d.\] The first three equalities follow from a
direct calculation. The other equalities are consequences of Lemma \ref{dReval}.
This proves the lemma.
\end{proof}

As Lemma \ref{volume} shows that the cup product of $\eta$ and $\omega$ in
$H^{2}(A_{\tau},\mathbb{R})$ is $Ndisc(I)$ if $\eta\wedge\omega=Na \wedge b
\wedge c \wedge d$, Lemmas \ref{dReval}, \ref{volume}, and \ref{abcdz1z2} prove
the first three parts of Theorem \ref{H2Aprop} directly. The proof of part
$(iii)$ also makes use of the explicit isomorphism between the representations
$Sym^{2}\mathbb{R}^{2}$ and $M_{2}(\mathbb{R})_{0}$ of $SL_{2}(\mathbb{R})$
(the action on the latter is by conjugation) given by
\[\binom{1}{0}^{2}\longleftrightarrow\binom{0\ \ -1}{0\ \ \ \ \ \
0},\quad\binom{1}{0}\binom{0}{1}\longleftrightarrow\frac{1}{2}\binom{1\ \ \ \ \
\ 0}{0\ \ -1},\quad\mathrm{and}\quad\binom{0}{1}^{2}\longleftrightarrow\binom{0\
\ 0}{1 \ \ 0}\!.\] Similar considerations also prove
\begin{cor}
The fundamental (cohomology) class of a 1-codimensional cycle $C$ in $A_{\tau}$
is represented by \[\frac{1}{disc(I)}\Bigg[\bigg(\int_{C}d \wedge c\bigg)b
\wedge a+\bigg(\int_{C}b \wedge a\bigg)d \wedge c-\bigg(\int_{C}d \wedge
b\bigg)c \wedge a+\] \[-\bigg(\int_{C}c \wedge a\bigg)d \wedge b+\bigg(\int_{C}c
\wedge b\bigg)d \wedge a+\bigg(\int_{C}d \wedge a\bigg)c \wedge b\Bigg].\]
\label{cycform}
\end{cor}

\begin{proof}
the cycle $C$ is represented, in terms of Poincar\'{e} duality, by the 2-form
$\omega$, if and only if the equality
\[\int_{C}\eta=\int_{A_{\tau}}\omega\wedge\eta=\int_{A_{\tau}}\eta\wedge\omega
\quad\mathrm{holds\ for\ every}\quad\eta \in H^{2}(A_{\tau},\mathbb{C}).\] The
corollary now follows from Lemma \ref{volume}.
\end{proof}

We first recover the expressions for the fundamental classes of the graphs of
the generic maps between elliptic curves. Take $I=\mathcal{M}$ to be the
classical Eichler order of level $N$ in $B=M_{2}(\mathbb{Q})$. For any $M \in
N\mathbb{Z}$ we define $\varphi_{M}:E_{\tau} \to E_{N\tau}$ to be the map
descending from multiplication by $M$ on $\mathbb{C}$. Similarly, if
$L\in\mathbb{Z}$ then $\psi_{L}:E_{N\tau} \to E_{\tau}$ descends from
multiplication by $L$ on $\mathbb{C}$. The graphs of these maps are cycles in
$A_{\tau}=E_{\tau} \times E_{N\tau}$, for which we indeed obtain
\begin{prop}
The fundamental class of the graph of $\varphi_{M}$ in $A_{\tau}$ is
\[\frac{M^{2}}{N}b \wedge a+\frac{M}{N}(a \wedge d-b \wedge c)+\frac{d \wedge
c}{N}=\iota\binom{M/N\ \ -1/N}{M^{2}/N\ \ -M/N}.\] For the graph of $\psi_{L}$
it is \[b \wedge a+L(a \wedge d-b \wedge c)+L^{2}d \wedge c=\iota\binom{L\ \
-L^{2}}{1\ \ \ -L}.\] \label{funclgen}
\end{prop}
Note that the first class here lies in $H^{2}(A_{\tau},\mathbb{Z})$ since
$\frac{c}{N} \in H^{1}(A_{\tau},\mathbb{Z})$.

\begin{proof}
The equalities $dz_{2}=Mdz_{1}$ and $d\overline{z}_{2}=Md\overline{z}_{1}$ hold
along the graph of $\varphi_{M}$, which is defined by the relation $z_{2}=Mz_{1}$. This gives $c=Ma$ and $d=Mb$ by the proof of Lemma \ref{abcdz1z2}. The integral of $b \wedge a$ over this cycle coincides with its integral over $E_{\tau}$, which equals 1. Using the relations $c=Ma$ and $d=Mb$ on this cycle we can evaluate the other integrals appearing in Corollary \ref{cycform}. This gives the desired result since $disc(I)=N$. For the graph of $\psi_{L}$ we get the equalities $a=Lc$ and $b=Ld$, while the integral of $d \wedge c$ over $E_{N\tau}$ (and over the cycle) is $N$. A similar argument now completes the proof of the proposition.
\end{proof}
In particular, the axes $E_{\tau}\times\{0\}$ and $\{0\} \times E_{N\tau}$ are
the graphs of $\varphi_{0}$ and $\psi_{0}$ respectively. Proposition
\ref{funclgen} assigns to these the classes $\frac{d \wedge c}{N}=\iota\binom{0\
\ -1/N}{0\ \ \ \ \ 0\ \ }$ and $b \wedge a=\iota\binom{0\ \ 0}{1\ \ 0}$
respectively. The third ``basis element'' $\iota\binom{1\ \ \ \ 0}{0\ \ -1}=a
\wedge d-b \wedge c$ is indeed obtained by subtracting the appropriate multiples
of the axes from the graphs of either $\varphi_{N}$ or $\psi_{1}$ (which are
dual isogenies). Hence for split $B$ the generic subgroup of $NS(A_{\tau})$ is
generated by classes of Abelian subvarieties of $A_{\tau}$. This is never the
case if $B$ is a division algebra---see Theorem \ref{Absub} below.

\medskip

Next, we consider the classical CM isogenies. Let an imaginary quadratic field
$\mathbb{K}=\mathbb{Q}(\sqrt{-D})$ be given. For any
$\tau\in\mathcal{H}\cap\mathbb{K}$, both $E_{\tau}$ and $E_{N\tau}$ have CM from
an order $\mathcal{O}$ in $\mathbb{K}$. Multiplying $D$ by an appropriate
square, we may assume that $\sqrt{-D}\in\mathcal{O}$. Over such an element
$\tau$ of $\mathcal{H}$ one can define the CM isogenies ``multiplication by
$N\sqrt{-D}$" from $E_{\tau}$ to $E_{N\tau}$ and ``multiplication by
$\sqrt{-D}$" from $E_{N\tau}$ to $E_{\tau}$. Let us reproduce the results about
their fundamental classes in $A_{\tau}=E_{\tau} \times E_{N\tau}$:
\begin{prop}
The fundamental class of the graph of the latter isogeny is \[b \wedge a+Dd
\wedge c+\frac{\sqrt{D}}{y}\big[|\tau|^{2}c \wedge a+x(d \wedge a+c \wedge b)+d
\wedge b\big]=\iota\binom{0\ \ -D}{1\ \ \ \ \ 0\
}+\tilde{\iota}(\sqrt{D}J_{\tau}).\] For the former, we get \[\frac{d \wedge
c}{N}+NDb \wedge a-\frac{\sqrt{D}}{y}\big[|\tau|^{2}c \wedge a+x(d \wedge
a+c \wedge b)+d \wedge b\big],\] which equals $\iota\binom{\ \ 0\ \ \ -1/N}{ND\
\ \ \ 0\ \ }-\tilde{\iota}(\sqrt{D}J_{\tau})$.
\label{funclCME}
\end{prop}

\begin{proof}
Along the graph of the CM isogeny $E_{N\tau} \to E_{\tau}$ we have
$dz_{1}=\sqrt{-D}dz_{2}$ and $d\overline{z}_{1}=-\sqrt{-D}d\overline{z}_{2}$,
which translate to $(a\ \ b)=(c\ \ d)\sqrt{D}J_{\tau}$ (as row vectors of length
2). The integral of $d \wedge c$ over the cycle is $N$ (like with $\psi_{L}$
from Proposition \ref{funclgen}). Evaluating the other integrals along the cycle
by relating the 2-forms to $d \wedge c$ and substituting the results as well as
the value $N$ of $disc(I)$ into Corollary \ref{cycform} yields the asserted
value. The equalities satisfied along the graph of the other CM isogeny are
$dz_{2}=N\sqrt{-D}dz_{1}$ and $d\overline{z}_{2}=-N\sqrt{-D}d\overline{z}_{1}$,
whence $(c\ \ d)=(a\ \ b)N\sqrt{D}J_{\tau}$. The integral of $b \wedge a$ along
this graph is 1 (as for $\varphi_{M}$ in Proposition \ref{funclgen}), and again
we obtain the required fundamental class. For the description using $\iota$ and
$\tilde{\iota}$ we apply Lemma \ref{dReval}. This proves the proposition.
\end{proof}

The generic parts in the fundamental classes appearing in Proposition
\ref{funclCME} are multiples of the axes $E_{\tau}\times\{0\}$ and $\{0\} \times
E_{N\tau}$ coming from the intersection numbers. In view of Theorem
\ref{funcltau0} below, we mention that they are equal
$\iota(\sqrt{D}J_{\sqrt{-D}})$ and $\iota\big(\sqrt{D}J_{-1/N\sqrt{-D}}\big)$
respectively. The normalized fundamental classes are the transversal classes
$\pm\tilde{\iota}(\sqrt{D}J_{\tau})$, indeed of Hodge type $(1,1)$ by part
$(iii)$ of Theorem \ref{H2Aprop}. The different signs are related to the fact
that these isogenies are not dual to one another, but to minus one another.

\subsection{CM Points and CM Cycles}

Let $B$ now be arbitrary, with $i$, $\mathcal{M}$, $I$, $\tau$, $A_{\tau}$, and
$\pi:\mathcal{A}\to\mathcal{H}$ as above. For every $\tau\in\mathcal{H}$,
$NS(A_{\tau})$ contains the generic part $\iota(\Lambda)$ (hence is of rank at
least 3), but it cannot be of rank exceeding
$\dim_{\mathbb{C}}H^{1,1}(A_{\tau})=4$. The CM points, which depend on $B$ and
on the isomorphism $i$, are characterized by the following extension of Lemma
7.2 of \cite{[Be]}:

\begin{prop}
For $\tau\in\mathcal{H}$, the following conditions are equivalent:
\begin{enumerate}[$(i)$]
\item There exists $b \in B\setminus\mathbb{Q}$ with $N(b)>0$ such that $b=i(b)
\in GL_{2}^{+}(\mathbb{R})$ fixes $\tau$.
\item Some (real) multiple of $J_{\tau}$ lies in $B=i(B)$.
\item The rank of $NS(A_{\tau})$ is 4 (rather than the generic 3).
\item There is a non-trivial endomorphism of $A_{\tau}$ commuting with the
action of $\mathcal{M}$.
\end{enumerate}\label{CMptchar}
\end{prop}

\begin{proof}
The equivalence of $(i)$ and $(ii)$ follows from the fact that the stabilizer of
$\tau$ in $GL_{2}^{+}(\mathbb{R})$ consists of the matrices of the form
$dI+cJ_{\tau}$ with $c$ and $d$ real. $(ii)$ and $(iii)$ are equivalent since
$H^{1,1}(A_{\tau}) \cap \iota(B_{0})^{\perp}$ consists of multiples of
$\tilde{\iota}(J_{\tau})$, using the Lefschetz Theorem on $(1,1)$ classes. Now,
Lemma \ref{mapsAtau} implies that endomorphisms of $A_{\tau}$ which commute with
the action of $\mathcal{M}$ are given by right multiplication by a matrix from
$R(I)$ whose action preserves $\tau$. As such a non-zero element must have a
positive reduced norm, the equivalence of $(ii)$ and $(iv)$ also follows. This
proves the proposition.
\end{proof}

The proof of Proposition \ref{CMptchar} shows that if $A_{\tau}$ has CM then
$End_{\mathcal{M}}(A_{\tau})$ is generated by some element of the form
$b=cJ_{\tau}$ of $B$, whose square $-c^{2}=-N(b)$ is rational and negative.
Hence $End_{\mathcal{M}}(A_{\tau})$ is an order in an imaginary quadratic field,
which embeds into $B$. This imaginary quadratic field thus splits $B$.

\smallskip

Fixing an imaginary quadratic field $\mathbb{K}$ which splits $B$, we need to
normalize certain embeddings. Fix one embedding of $\mathbb{K}$ into
$\mathbb{C}$ (which we write just as inclusion), so that henceforth
$\sqrt{-D}\in\mathbb{K}$ with positive $D\in\mathbb{Q}$ will mean the element
of $\mathbb{K}\cap\mathcal{H}$ via this inclusion. Any embedding of $\mathbb{K}$
into $B \subseteq M_{2}(\mathbb{R})$ sends $\sqrt{-D}\in\mathbb{K}$ to
$\pm\sqrt{D}J_{\tau}$ for some $\tau\in\mathcal{H}$. We call this embedding
\emph{normalized} if the sign is positive. Considering the action of the
endomorphism $\sqrt{D}J_{\tau}$ of $A_{\tau}$ appearing in part $(iv)$ of
Proposition \ref{CMptchar} on $H^{1,0}(A_{\tau})$, one easily verifies that our
normalization is equivalent to the one given in Section 7 of \cite{[Be]}. All
the embeddings henceforth will be assumed to be normalized.

Recall that in the split case, all the points from $\mathcal{H}\cap\mathbb{K}$
for a fixed imaginary quadratic field $\mathbb{K}$ are related by the action of
$GL_{2}^{+}(\mathbb{Q})$. We now extend this assertion to any quaternion algebra
$B$. Let $\tau_{0}\in\mathcal{H}$ be the fixed point of a normalized embedding
of some imaginary quadratic field $\mathbb{K}$ (of discriminant $-D$) into $B$.
Thus $A_{\tau_{0}}$ has CM from an order in $\mathbb{K}$ and
$\sqrt{D}J_{\tau_{0}} \in B$ by Proposition \ref{CMptchar}. We now prove

\begin{lem}
For a point $\tau\in\mathcal{H}$, the following are equivalent:
\begin{enumerate}[$(i)$]
\item The Abelian surface $A_{\tau}$ (with QM from $\mathcal{M}$) has CM from an
order in $\mathbb{K}$.
\item $\tau_{0}=\gamma\tau$ for some $\gamma \in B$ with positive norm.
\end{enumerate}\label{CMK}
\end{lem}

\begin{proof}
Proposition \ref{CMptchar} shows that condition $(i)$ is equivalent to
$\sqrt{D}J_{\tau}$ being in $B$. Combining Equation \eqref{Jtauconj}, the
Skolem-Noether Theorem, and the fact that $J_{\tau}$ determines $\tau$ with the
fact that $\det\gamma$ must be positive if $\tau_{0}=\gamma\tau$ holds with
$\tau$ and $\tau_{0}$ from $\mathcal{H}$ now yields the desired equivalence.
This proves the lemma.
\end{proof}

Scalar multiplication from $\mathbb{Z}$ can take the element $\gamma$ in
condition $(ii)$ of Lemma \ref{CMK} to any order or ideal in $B$ of our choice.
Moreover, Lemma \ref{CMK} extends to $\tau_{0}\in\overline{\mathcal{H}}$ by
allowing $\gamma$ to have negative reduced norm. This will be useful for
defining the most general CM cycles in $A_{\tau}$ below.

\medskip

We consider the graphs of CM isogenies from the split $B$ case as the images in
$A_{\tau}$ of the lines $\mathbb{C}\binom{1}{N\sqrt{-D}}$ and
$\mathbb{C}\binom{\sqrt{-D}}{1}$ in the universal cover $\mathbb{C}^{2}$
respectively. Multiplying $D$ by an integral square changes the slopes of these
lines, yielding many CM cycles in $A_{\tau}$. The graphs of the generic maps
$\varphi_{M}$ with $M \in N\mathbb{Z}$ or $\psi_{L}$ for $L\in\mathbb{Z}$ are
described similarly, using the lines $\mathbb{C}\binom{1}{M}$ and
$\mathbb{C}\binom{L}{1}$ respectively. With this motivation, let $B$ (and $i$)
be general, let $\tau_{0}\in\mathcal{H}\cup\overline{\mathcal{H}}$ and
$\mathbb{K}$ be as in (the extended version of) Lemma \ref{CMK}, and assume that
the Abelian surface $A_{\tau}$ with QM has also CM from an order in
$\mathbb{K}$. We now prove
\begin{prop}
The image of the line $\mathbb{C}\binom{\tau_{0}}{1}\subseteq\mathbb{C}^{2}$ in
$A_{\tau}$ is an Abelian subvariety of $A_{\tau}$. Furthermore, it is isomorphic
to an elliptic curve with CM from $\mathbb{K}$. \label{CMcyc}
\end{prop}

\begin{proof}
It suffices to show that the intersection
$I\binom{\tau}{1}\cap\mathbb{C}\binom{\tau_{0}}{1}$ is a full lattice in that
line. Lemma \ref{CMK} produces an element $\gamma \in B$ such that
$\gamma\tau=\tau_{0}$, which we assume to lie in $I$ and be primitive there.
Hence $\gamma\binom{\tau}{1} \in I\binom{\tau}{1}$ lies in
$\mathbb{C}\binom{\tau_{0}}{1}$ by Equation \eqref{V1mod}. But $\sqrt{D}J_{\tau}
\in B$ by Proposition \ref{CMptchar}, and by renormalizing $D$ we may assume
that this element lies in $R(I)$. As $\sqrt{D}J_{\tau}$ multiplies
$\binom{\tau}{1}$ by $\sqrt{-D}\in\mathbb{C}\setminus\mathbb{R}$ (Equation
\eqref{V1mod} again), we find that $\gamma\cdot\sqrt{D}J_{\tau}\binom{\tau}{1}$
is another element of $\mathbb{C}\binom{\tau_{0}}{1}$, which is linearly
independent of the previous one over $\mathbb{R}$. Hence the image of
$\mathbb{C}\binom{\tau_{0}}{1}$ in $A_{\tau}$ is an Abelian subvariety of
$A_{\tau}$, which is an elliptic curve. As $\gamma$ is primitive in $I$, we may
find integers $g>0$ and $h$ such that $\gamma\binom{\tau}{1}$ and
$\gamma\frac{\sqrt{D}J_{\tau}-h}{g}\binom{\tau}{1}$ generate the lattice
$I\binom{\tau}{1}\cap\mathbb{C}\binom{\tau_{0}}{1}$. Our elliptic curve is thus
isomorphic to $E_{\tilde{\tau}}$ for
$\tilde{\tau}=\frac{\sqrt{-D}-h}{g}\in\mathcal{H}$, indeed having CM by
$\sqrt{-D}\in\mathbb{K}$. This proves the proposition.
\end{proof}

Proposition \ref{CMcyc} allows us to make the following
\begin{defn}
Given $\tau$ and $\tau_{0}$ as above, we define the \emph{CM cycle corresponding
to $\tau_{0}$} in $A_{\tau}$ to be the cycle described in Proposition
\ref{CMcyc}. \label{CMdef}
\end{defn}
Definition \ref{CMdef} includes the case of graphs of CM isogenies in the split
case from above, as the cases $\tau_{0}=\sqrt{-D}\in\mathcal{H}$ and
$\tau_{0}=\frac{1}{N\sqrt{-D}}\in\overline{\mathcal{H}}$ respectively. The
graphs of the generic maps may be seen as an extension of Definition \ref{CMdef}
to some rational (or infinite) $\tau_{0}$---more on this below.

\smallskip

Let $\tau_{0}=x_{0}+iy_{0}\in\mathcal{H}\cup\overline{\mathcal{H}}$ be as above.
We prove
\begin{thm}
The fundamental class of the CM cycle corresponding to $\tau_{0}$ equals
$sgn(y_{0})\big[\iota(\beta\sqrt{D}J_{x_{0}+i|y_{0}|})+\tilde{\iota}(\beta\sqrt{
D}J_{\tau})\big]$ for some positive $\beta\in\mathbb{Q}$. \label{funcltau0}
\end{thm}

\begin{proof}
The proof of Proposition \ref{CMcyc} shows that as a subset of $A_{\tau}$, the
CM cycle in question is
$\mathbb{C}\binom{\tau_{0}}{1}/j(\gamma,\tau)(\mathbb{Z}\tilde{\tau}
\oplus\mathbb{Z})\binom{\tau_{0}}{1}$ (recall the coefficient $j(\gamma,\tau)$
in Equation \eqref{V1mod}). Along this cycle the relations
$dz_{1}=\tau_{0}dz_{2}$ and
$d\overline{z}_{1}=\overline{\tau_{0}}d\overline{z}_{2}$ hold, and in $A_{\tau}$
these relations are equivalent to $(a\ \ b)=(c\ \ d)(x_{0}I+y_{0}J_{\tau})$. As
in Propositions \ref{funclgen} and \ref{funclCME}, evaluating the integral of $d
\wedge c$ over our cycle suffices for determining the required fundamental
class. Using the first formula in Lemma \ref{abcdz1z2}, we evaluate the integral
of $dz_{2} \wedge d\overline{z}_{2}$ instead. Now, the isomorphism of our cycle
with $E_{\tilde{\tau}}$ is given by the second coordinate divided by
$j(\gamma,\tau)$. Hence we may substitute $dz_{2}=j(\gamma,\tau)dz$ and
$d\overline{z}_{2}=\overline{j(\gamma,\tau)}d\overline{z}$, where $z$ is the
coordinate of $E_{\tilde{\tau}}$. A straightforward evaluation now gives
\begin{equation}
\int_{\left(\mathbb{C}\binom{\tau_{0}}{1} \to A_{\tau}\right)}\!\!dz_{2}
\wedge d\overline{z}_{2}=|j(\gamma,\tau)|^{2}\!\int_{E_{\tilde{\tau}}}\!dz
\wedge d\overline{z}=-2i|j(\gamma,\tau)|^{2}\Im\tilde{\tau}=-2iy\frac{
\det\gamma\cdot\Im\tilde{\tau}}{y_{0}}, \label{intCMcyc}
\end{equation}
where in the last step we have used the equality $y_{0}=\frac{\det\gamma \cdot
y}{|j(\gamma,\tau)|^{2}}$ (recall that $\tau_{0}=\gamma\tau$, and $\det\gamma$
is not necessarily 1). Dividing by $-2iy$ to get the integral of $d \wedge c$
and evaluating the other integrals appearing in Corollary \ref{cycform} as in
the proofs of Propositions \ref{funclgen} and \ref{funclCME} gives the
expression
\[\frac{\det\gamma\cdot\Im\tilde{\tau}}{disc(I)}\bigg[\frac{b \wedge
a}{y_{0}}+\frac{|\tau_{0}|^{2}}{y_{0}}d \wedge c+\frac{|\tau|^{2}}{y}c \wedge
a+\frac{d \wedge b}{y}+\bigg(\frac{x}{y}-\frac{x_{0}}{y_{0}}\bigg)d \wedge
a+\bigg(\frac{x}{y}+\frac{x_{0}}{y_{0}}\bigg)c \wedge b\bigg]\] for the
fundamental class in question. Lemma \ref{dReval} reduces the latter expression
to
$\frac{\det\gamma\cdot\Im\tilde{\tau}}{disc(I)}(\iota(J_{\tau_{0}})+\tilde{\iota
}(J_{\tau}))$. Now, the coefficient
$\frac{\det\gamma\cdot\Im\tilde{\tau}}{disc(I)}$ equals $\tilde{\beta}\sqrt{D}$
for some $\tilde{\beta}\in\mathbb{Q}$ having the same sign as $\det\gamma$ as
well as $y_{0}$. Putting $\beta=|\tilde{\beta}|$ and using the relation
$J_{\tau_{0}}=-J_{\overline{\tau_{0}}}$ if $\tau_{0}\in\overline{\mathcal{H}}$
completes the proof of the theorem.
\end{proof}

The proofs of Proposition \ref{CMcyc} and Theorem \ref{funcltau0} involve the
parameter $\tilde{\tau}$, which depends on the choice of the primitive element
$\gamma \in I$ taking $\tau$ to $\tau_{0}$. However, choosing another such
element $\delta$ and taking the appropriate $g$ and $h$ can be seen to yield
the index $\alpha\tilde{\tau}$ for some $\alpha \in SL_{2}(\mathbb{Z})$.
Moreover, $\gamma^{-1}\delta$ has reduced norm $|j(\alpha,\tilde\tau)|^{2}$ in
this case. As $E_{\alpha\tilde{\tau}} \cong E_{\tilde{\tau}}$ and
$\det\delta\cdot\Im\alpha\tilde{\tau}=\det\gamma\cdot\Im\tilde{\tau}$, both the
description of the CM cycle and the coefficient we wrote as
$\tilde{\beta}\sqrt{D}$ are intrinsic. The actual value of $\beta$ is probably
the minimal positive rational number such that both terms give elements of
$H^{2}(A_{\tau},\mathbb{Z})$, but this point requires further investigation.

We remark that the generic cycles in the split $B$ case can be obtained as the
case $\tau_{0}\in\mathbb{P}^{1}(\mathbb{R})$ of Proposition \ref{CMcyc} and
Theorem \ref{funcltau0}. Indeed, by keeping $|j(\gamma,\tau)|^{2}$ instead of
the (here undefined) expression $\frac{\det\gamma}{y_{0}}$ in Equation
\eqref{intCMcyc}, the same argument works if $\det\gamma=0$ and
$\tau_{0}\in\mathbb{R}$. Similar manipulations extend the assertion to
$\tau_{0}=\infty$ as well. Note that the sign of the normalized cycle from
Theorem \ref{funcltau0} distinguishes between lines having slopes in
$\mathcal{H}$ from those with slopes in $\overline{\mathcal{H}}$ (whence the
signs in Proposition \ref{funclCME}). The vanishing of the $\tilde{\iota}$
part for $\tau_{0}\in\mathbb{P}^{1}(\mathbb{R})$ in Proposition \ref{funclgen}
corresponds to the fact that $\mathbb{P}^{1}(\mathbb{R})$ lies between
$\mathcal{H}$ and $\overline{\mathcal{H}}$.

We remark that this method of calculating fundamental classes of CM cycles is
equivalent to the more direct approach of finding intersection numbers of the
cycles themselves (see, e.g., the proof of Corollary \ref{cycform}). However, it
has the advantage of avoiding the need to analyze the structure of the specific
quaternion algebra $B$. We also note that our set-theoretic presentation depends
on the embedding $i$. However, by the Skolem--Noether Theorem, this makes no
essential difference algebraically.

\smallskip

An Abelian surface $A_{\tau}$ with QM having CM from an order in a field
$\mathbb{K}$ contains infinitely many different CM cycles. For these we prove
\begin{prop}
For $\tau$ as above we have
\begin{enumerate}[$(i)$]
\item The action of $\mathcal{M} \subseteq End(A_{\tau})$ relates all the CM
cycles in $A_{\tau}$ to one another.
\item Given $M \in R(I)^{\times}_{+}$, the isomorphism $A_{M\tau} \to A_{\tau}$ from Lemma \ref{mapsAtau} takes the CM cycle corresponding to $\tau_{0}$ in $A_{M\tau}$ to the one in $A_{\tau}$.
\item If $A_{\tau}$ and its QM and CM endomorphisms are defined over a number
field $\mathbb{F}$ then all the CM cycles are also defined over $\mathbb{F}$.
\item $A_{\tau}$ is isogenous, over $\mathbb{F}$, to the self-product of an
elliptic curve with CM from an order in $\mathbb{K}$.
\end{enumerate}\label{CMrel}
\end{prop}

\begin{proof}
Part $(i)$ follows from Lemma \ref{CMK}. For part $(ii)$, Lemma \ref{mapsAtau}
shows that the isomorphism in question multiplies the holomorphic tangent space
$\mathbb{C}^{2}$ by a scalar. Thus, this map preserves lines in
$\mathbb{C}^{2}$, hence also CM cycles. To prove part $(iii)$ note that Equation
\eqref{V1mod} characterizes $\mathbb{C}\binom{\tau_{0}}{1}$ as those vectors in
$\mathbb{C}^{2}$ on which the QM endomorphism
$\sqrt{D}J_{\tau_{0}}\in\mathcal{M} \subseteq B$ acts like the CM endomorphism
$\sqrt{-D}\in\mathcal{O}\subseteq\mathbb{K}\subseteq\mathbb{C}$ (with positive
imaginary part). By connectedness, the CM cycle corresponding to $\tau_{0}$ is
thus the connected component of the kernel of the endomorphism
$\sqrt{D}J_{\tau_{0}}-\sqrt{-D}\in\mathcal{M}\otimes\mathcal{O}=End(A_{\tau})$.
The kernel itself is clearly defined over $\mathbb{F}$. For the connected
component, observe that any  automorphism of $\mathbb{C}/\mathbb{F}$ gives an
isomorphism of Abelian varieties from $A_{\tau}$ to itself, preserves the kernel
in question, takes connected components to connected components, and maps 0 to
0. Hence any such automorphism preserves the CM cycle in question (this argument
was shown to me by Philipp Habegger), which completes the proof of part $(iii)$.
For part $(iv)$ we recall that embedding two different CM cycles into $A_{\tau}$
defines an isogeny between the product of these CM cycles and $A_{\tau}$, and
that any two elliptic curves with CM from $\mathbb{K}$ are isogenous. The
desired assertion now follows from part $(iii)$ and Proposition \ref{CMcyc}.
This proves the proposition.
\end{proof}

Part $(i)$ of Proposition \ref{CMrel} shows that like with polarizations, there
is no ``canonical'' choice of a CM cycle in $A_{\tau}$. Part $(ii)$ implies that
the CM cycles are well-defined also on fibers over $X(\Gamma)$ for $\Gamma
\subseteq R(I)^{\times}_{+}$. They may therefore be considered as cycles of
codimension 2 in the (algebraic) universal family $\mathcal{A}$ over
$X(\Gamma)$. Parts $(iii)$ and $(iv)$ essentially appear in Section 7 of
\cite{[Be]}, but our proof is simpler and more elementary. The first three
assertions of Proposition \ref{CMrel} extend to the generic cycles in the split
$B$ case, when part $(i)$ is interpreted appropriately.

\medskip

We now have two kinds of (1-dimensional) Abelian subvarieties of an Abelian
surface $A$ with QM: The split $B$ case gives rise to generic cycles (graphs of
generic correspondences), while the CM case (with general $B$) introduces CM
cycles. On the other hand, we can prove
\begin{thm}
For $\tau\in\mathcal{H}$ we have
\begin{enumerate}[$(i)$]
\item If the Abelian surface $A_{\tau}$ with QM has also CM then the lines with
finite non-real slopes in $\mathbb{C}^{2}$ which give Abelian subvarieties of
$A_{\tau}$ are exactly the lines descending to CM cycles as in Proposition
\ref{CMcyc}.
\item If $A_{\tau}$ has no CM then no lines of non-real slopes give Abelian
subvarieties of $A_{\tau}$.
\item If $B=M_{2}(\mathbb{Q})$ then a line of slope from
$\mathbb{P}^{1}(\mathbb{R})$ gives an Abelian subvariety of $A_{\tau}$ if and
only if the slope is in $\mathbb{P}^{1}(\mathbb{Q})$.
\item If $B$ is not split then no lines of real or infinite slope give Abelian
subvarieties of $A_{\tau}$.
\end{enumerate}\label{Absub}
\end{thm}

\begin{proof}
We need to find all slopes $\tau_{0}$ such that $I\binom{\tau}{1}$ intersects
$\mathbb{C}\binom{\tau_{0}}{1}$ in a full lattice in this line. Equivalently, by
Equation \eqref{V1mod} and its extensions we need two linearly independent
elements of $B$ which take $\tau$ to $\tau_{0}$. Now, for part $(i)$ we must
have $\tau_{0}=\gamma\tau$ for $\gamma \in B^{\times}$, and the assertion
follows as in Lemmas \ref{CMptchar} and \ref{CMK}. Now, for two elements
$\gamma$ and $\delta$ sending $\tau$ to $\tau_{0}$, $\gamma^{-1}\delta$
stabilizes $\tau$. Hence part $(ii)$ follows from Proposition \ref{CMptchar} as
well. In addition, the equality $\tau_{0}=\gamma\tau$ with $\tau\in\mathcal{H}$,
$\gamma\neq0$, and $\tau_{0}\in\mathbb{P}^{1}(\mathbb{R})$ implies
$N(\gamma)=0$. Part $(iv)$ is thus also established. For part $(iii)$, recall
that an element $0\neq\gamma \in M_{2}(\mathbb{Q})$ satisfies $\det\gamma=0$ if
and only if its rows are rational multiples of one another. Hence only slopes
from $\mathbb{P}^{1}(\mathbb{Q})$ may be obtained. Finally, for
$\tau_{0}=\frac{L}{M}\in\mathbb{P}^{1}(\mathbb{Q})$ (with
$\infty=\frac{1}{0}$), the elements $\binom{0\ \ L}{0\ \ M}$ and $\binom{L\ \
0}{M\ \ 0}$ of $M_{2}(\mathbb{Q})$ send $\binom{\tau}{1}$ to $\binom{L}{M}$ and
$\tau\binom{L}{M}$, respectively. Thus, lines with such slopes do give Abelian
subvarieties, which proves $(iii)$. This completes the proof of the theorem.
\end{proof}

Theorem \ref{Absub} shows that the only Abelian subvarieties of an Abelian
surface $A$ with QM are the CM cycles (if they exist), plus the generic cycles
if $B$ splits. This result is related to rational points on Brauer--Severi
varieties (the algebra is $End(A)_{\mathbb{Q}}$ considered over its center, the
latter being $\mathbb{K}$ in the case of CM and $\mathbb{Q}$ otherwise). Recall
that any Abelian subvariety $C$ of an Abelian variety $A$ admits a
``complementary'' subvariety $\widetilde{C}$ of $A$ such that $A$ is isogenous
to $C\times\widetilde{C}$ (see, for example, the proof of Proposition 10.1 of
\cite{[M]}). Expressing the latter result in terms of (connected components of)
kernels of endomorphisms of $A$ shows, together with the proof of part $(iii)$
of Proposition \ref{CMrel} here (extended also to the generic cycles), that our
Theorem \ref{Absub} agrees with the expected result. However, it also provides
an explicit, set-theoretic description of the Abelian subvarieties in this case
of Abelian surfaces with QM.

\medskip

Let $\mathcal{A}^{m}$ be the fibered product of $\mathcal{A}$ with itself $m$
times over $\mathcal{H}$, with the projection
$\pi_{m}:\mathcal{A}^{m}\to\mathcal{H}$. Given a CM cycle $C \subseteq
A_{\tau}$, we define its $m$th power ot be \[C^{m}=\big\{(z_{1},\ldots,z_{m})
\in A_{\tau}^{m}\big|z_{j} \in C,1 \leq j \leq m\big\} \subseteq A_{\tau}^{m}.\]
This $m$-cycle in $A_{\tau}^{m}$ is also called a CM cycle. We consider $C^{m}$
as a vertical cycle in $\mathcal{A}^{m}$, which is algebraic in $A_{\tau}^{m}$
as well as in $\mathcal{A}^{m}$ over $X(\Gamma)$. Such a cycle has a fundamental
cohomology class in $H^{2m}(A_{\tau}^{m})$, or more precisely in the
$S_{m}$-invariant part $Sym^{m}H^{2}(A_{\tau})$ of the K\"{u}nneth component
$H^{2}(A_{\tau})^{\otimes m}$ of the latter cohomology group. By letting $\tau$
vary, the normalized fundamental classes of these cycles give elements of the
fiber of the symmetric part $Sym^{m}V_{2}$ of the subvariation of Hodge
structure $V_{2}^{\otimes m}\subseteq(R^{2}\pi_{*}\mathbb{C})^{\otimes m}$ of
$R^{2m}\pi_{m,*}\mathbb{C}$ over $\mathcal{H}$ or over $X(\Gamma)$. Now,
$Sym^{m}V_{2}$ admits a smaller rational subvariation of Hodge structure which
is isomorphic to $V_{2m}$ (it is defined as the kernel of a certain Laplacian
map to $Sym^{m-2}V_{2}$). The objects referred to as CM cycles in Section 5 of
\cite{[Be]} are the images of the normalized fundamental classes of our
$m$-dimensional CM cycles under the projection onto this subvariation of
Hodge structure (denoted $P$ in that reference). Theorem \ref{main} below proves
the modularity of our cycles in the larger subvariation of Hodge structure
$Sym^{m}V_{2}$. We note that \cite{[FM1]} and \cite{[FM2]} have related
modularity results, but with the local system $V_{2m}$ (using other means like
the Shintani and Kudla--Millson lifts). For the split $B$ case, the
$m$-codimensional cycles arising from $\tau_{0}=\sqrt{-D}$ resemble the cycles
defined in page 123 of \cite{[Zh]} (for $m=k-1$). The action of the larger group
$S_{2m}$ (rather than our $S_{m}$) appearing in \cite{[Zh]} is the incarnation
of the projector $P$ in this case.

\section{Differential Operators on Theta Functions \label{RelDO}}

In this Section we prove a relation between the actions of two Laplacians on
Siegel theta functions with polynomials of a specific type. This relation is a
crucial step in showing that the theta lifts considered in the next Section are
eigenfunctions of the Laplacian on the Grassmannian. This property is central in
the following applications.

\subsection{The Basic Summand in a Theta Function}

Let $V$ be a real vector space with a bilinear form $(\cdot,\cdot):V \times
V\to\mathbb{R}$, which is non-degenerate with signature $(b_{+},b_{-})$. We
shorthand $(\lambda,\lambda)$ to $\lambda^{2}$. Hence the associated quadratic
form, denoted $q$ in \cite{[Bru]}, is $\lambda\mapsto\frac{\lambda^{2}}{2}$.
Consider the \emph{Grassmannian} \[G(V)=\{V=v_{+} \oplus
v_{-}|v_{+}\gg0,v_{-}\ll0,v_{+} \perp v_{-}\}\] (i.e., $v_{+}$ is positive
definite and $v_{-}$ is negative definite) of $V$, in elements of which we have
$\dim v_{\pm}=b_{\pm}$. Given $\lambda \in V$, its projection onto $v_{\pm}$ in
a given element of $G(V)$ is denoted $\lambda_{v_{\pm}}$. The Laplacian of
$v_{\pm}$ is
$\Delta_{v_{\pm}}=\pm\sum_{h=1}^{b_{\pm}}\frac{\partial^{2}}{\partial\lambda_{h}
^{2}}$ using an orthonormal basis for $v_{\pm}$. The Laplacian $\Delta_{V}$ of
$V$ is $\Delta_{v_{+}}+\Delta_{v_{-}}$ (this is independent of the choice of the
decomposition), and the \emph{Laplacian corresponding to $v \in G(V)$} is
$\Delta_{v}=\Delta_{v_{+}}-\Delta_{v_{-}}$ (this operator is based on the
majorant corresponding to $v$). For $\tau\in\mathcal{H}$, $v \in G(V)$,
a polynomial $p$ on $V$, and $\lambda \in V$, define the function
\[F(\tau,v,p,\lambda)=e^{-\Delta_{v}/8\pi
y}(p)(\lambda)\mathbf{e}\bigg(\tau\frac{\lambda_{v_{+}}^{2}}{2}+\overline{
\tau}\frac{\lambda_{v_{-}}^{2}}{2}\bigg),\] where $\mathbf{e}(z)=e^{2\pi iz}$
for any complex number $z$. The polynomial $p$ involved with these functions
will usually be \emph{homogenous of degree $(m_{+},m_{-})$ with respect to
$v$}. This means that for $\lambda \in V$, which decomposes as
$\lambda_{v_{+}}+\lambda_{v_{-}}$ with respect to $v$, and two numbers
$\alpha_{\pm}\in\mathbb{R}$, we have
$p(\alpha_{+}\lambda_{v_{+}}+\alpha_{-}\lambda_{v_{-}})=\alpha_{+}^{m_{+}}
\alpha_{-}^{m_{-}}p(\lambda)$.

An \emph{even lattice} in $V$ is a discrete subgroup $L \subseteq V$ of maximal
rank $b_{+}+b_{-}$ such that $\lambda^{2}\in2\mathbb{Z}$ for any $\lambda \in
L$. The dual lattice $L^{*}=Hom(L,\mathbb{Z})$ is a subgroup of $V$ which
contains $L$ with finite index $\Delta_{L}$. The \emph{Siegel theta function of
$L$}, with respect to the element $v \in G(V)$ and a polynomial $p$ which is
homogenous of degree $(m_{+},m_{-})$ with respect to $v$, is the
$\mathbb{C}[L^{*}/L]$-valued
function \[\Theta_{L}(\tau,v,p)=\sum_{\gamma \in
L^{*}/L}\theta_{\gamma}(\tau,v,p)e_{\gamma},
\qquad\theta_{\gamma}(\tau,v,p)=\sum_{\lambda \in
L+\gamma}F(\tau,v,p,\lambda).\] Here $e_{\gamma}$ is the canonical basis
element of $\mathbb{C}[L^{*}/L]$ which corresponds to the element $\gamma \in
L^{*}/L$.

The group $SL_{2}(\mathbb{R})$ admits a double cover, known as the The
\emph{metaplectic group} $Mp_{2}(\mathbb{R})$. Its elements are pairs
$(M,\varphi)$ with $M \in SL_{2}(\mathbb{R})$ and
$\varphi:\mathcal{H}\to\mathbb{C}$ holomorphic such that
$\varphi^{2}(\tau)=j(M,\tau)$ for every $\tau\in\mathcal{H}$. The product rule
in $Mp_{2}(\mathbb{R})$ takes $(M,\varphi)$ and $(N,\psi)$ to $\big(MN,(\varphi
\circ N)\cdot\psi\big)$. Let $Mp_{2}(\mathbb{Z})$ denote the inverse image of
$SL_{2}(\mathbb{Z})$ in $Mp_{2}(\mathbb{R})$. It is generated by the two
elements $T=\big(\binom{1\ \ 1}{\ \ \ \ 1},1\big)$ and $S=\big(\binom{\ \ -1}{1\
\ \ \ },\sqrt{\tau}\in\mathcal{H}\big)$, satisfying the relation
$S^{2}=(ST)^{3}$. This common element $Z=(-I,i)$ has order 4 and generates the
center of $Mp_{2}(\mathbb{Z})$ (as well as that of $Mp_{2}(\mathbb{R})$). The
kernel of the projection to $SL_{2}(\mathbb{R})$ is generated by $Z^{2}=(I,-1)$.
Let $\rho_{V}$ be the Weil representation of $Mp_{2}(\mathbb{R})$ on the space
$\mathcal{S}(V)$ of Schwartz functions on $V$. Explicitly, let $\big(M=\binom{a\
\ b}{c\ \ d},\varphi\big)$ and a Schwartz function $\Phi$ on $V$ be given. If
$c=0$, so that $\varphi(\tau)$ is the constant $\delta\sqrt{|d|}$ for some
$\delta\in\{\pm1,\pm i\}$, then $\rho_{V}(M,\varphi)\Phi$ is the function
\[\lambda\mapsto\delta^{b_{-}-b_{+}}|a|^{(b_{+}+b_{-})/2}\Phi(a\lambda)\mathbf{e
}\bigg(\frac{ab\lambda^{2}}{2}\bigg).\] Otherwise, it gives the function sending
$\lambda \in V$ to
\[\Big(sgn\big(\Re\varphi(\tau)\big)\zeta_{8}^{sgn(c)}\Big)^{b_{-}-b_{+}}|c|^
{-(b_{+}+b_{-})/2}\int_{V}\Phi(\mu)\mathbf{e}\bigg[\frac{d\mu^{2}}{2c}-\frac{
(\lambda,\mu)}{c}+\frac{a\lambda^{2}}{2c}\bigg]d\mu.\] Here
$\zeta_{8}=\mathbf{e}\big(\frac{1}{8}\big)$ is the basic 8th root of unity.
Furthermore, we denote $\rho_{L}$ the \emph{Weil representation of
$Mp_{2}(\mathbb{Z})$} on $\mathbb{C}[L^{*}/L]$, which is defined on the
generators by the well-known formulae
\[\rho_{L}(T)(e_{\gamma})=\mathbf{e}(\gamma^{2}/2)e_{\gamma},\]
\[\rho_{L}(S)(e_{\gamma})=\frac{\zeta_{8}^{b_{-}-b_{+}}}{\sqrt{\Delta_{L}}}\sum_
{\delta \in L^{*}/L}\mathbf{e}(-(\gamma,\delta))e_{\delta}\] (see, for example,
Section 4 of \cite{[B1]} or Section 2 of \cite{[B2]}). The number
$\zeta_{8}^{b_{+}-b_{-}}$ is the \emph{Weil index} of $V$ arising from its
quadratic structure and the character $t\mapsto\mathbf{e}(t)$ of $\mathbb{R}$,
as well as of the discriminant form $L^{*}/L$. This representation factors
through a double cover of $SL_{2}(\mathbb{Z}/N\mathbb{Z})$ for some integer $N$
called the level of $L$, or through $SL_{2}(\mathbb{Z}/N\mathbb{Z})$ itself if
the signature of $L$ is even. The action of a general element of
$Mp_{2}(\mathbb{Z})$ via $\rho_{L}$ is given in in \cite{[Sche]} for even
signature and \cite{[Str]} or \cite{[Ze1]} for the general case.

Let $K$ be the stabilizer of $i\in\mathcal{H}$ in $Mp_{2}(\mathbb{R})$. It is a
maximal compact subgroup of $Mp_{2}(\mathbb{R})$. It consists of the elements
$k_{\theta}=\big(\binom{\cos\theta\ \ -\sin\theta}{\sin\theta\ \ \
\cos\theta},\varphi\big)$, where $\varphi(i)=e^{i\theta/2}$, for
$\theta\in\mathbb{R}/4\pi\mathbb{Z}$. Some properties of the functions $F$ and
$\Theta_{L}$, which will turn out useful in this paper, are summarized in the
following

\begin{thm}
Assume that the polynomial $p$ is homogenous of degree
$(m_{+},m_{-})$ with respect to $v$.
\begin{enumerate}[$(i)$]
\item For any $M \in Mp_{2}(\mathbb{R})$ we have the equality
\[\rho_{V}(M)F(\tau,v,p,\lambda)=j(M,\tau)^{-\frac{b_{+}}{2}-m_{+}}\overline{j(M
,\tau)}^{-\frac{b_{-}}{2}-m_{-}}F(M\tau,v,p,\lambda)\] of functions of $\lambda
\in V$.
\item The action of $\rho_{V}(k_{\theta})$ multiplies $F(i,v,p,\lambda)$ by
$(e^{i\theta})^{\frac{b_{-}}{2}+m_{-}-\frac{b_{+}}{2}-m_{+}}$.
\item If $M \in Mp_{2}(\mathbb{Z})$ then the equality
\[\rho_{L}(M)\Theta_{L}(\tau,v,p)=j(M,\tau)^{-\frac{b_{+}}{2}-m_{+}}\overline{
j(M,\tau)}^{-\frac{b_{-}}{2}-m_{-}}\Theta_{L}(M\tau,v,p)\] holds in
$\mathbb{C}[L^{*}/L]$.
\end{enumerate}\label{modularity}
\end{thm}

\begin{proof}
Part $(i)$ is essentially Lemma 1.2 of \cite{[Sn]} (in fact, only the case
$b_{+}=2$ and certain harmonic polynomials of homogeneity degree $(m_{+},0)$ are
considered there, but using the results of Section 3 of \cite{[B1]} the proof
extends to the general case). Part $(ii)$ is the special case in which we take
$M=k_{\theta} \in K$ and $\tau=i$ in part $(i)$. Part $(iii)$ is the case
$\alpha=\beta=0$ in Theorem 4.1 of \cite{[B1]}. This proves the theorem.
\end{proof}

\medskip

Given real $r$ and $s$, let $\widetilde{\Delta}_{r,s}$ denote the operator
\[y^{2}(\partial_{x}^{2}+\partial_{y}^{2})-iy(r-s)\partial_{x}+y(r+s)\partial_{y
}=4y^{2}\partial_{\tau}\partial_{\overline{\tau}}-2iry\partial_{\overline{\tau}}
+2is\partial_{\tau}\] on functions on $\mathcal{H}$. The operator
$\Delta_{r,s}=\widetilde{\Delta}_{r,s}+(r-1)s$ is the \emph{weight $(r,s)$
Laplacian on $\mathcal{H}$}, and the equality
$\Delta_{r,s}y^{t}=y^{t}\Delta_{r+t,s+t}$ holds for any $t$. The \emph{weight
$r$ Laplacian} is just $\Delta_{r}=\Delta_{r,0}$. On the other hand, let
$\Delta_{SO_{b_{+},b_{-}}^{+}}$ denote the Laplacian of $O(V)$, i.e., the
Casimir element of the universal enveloping algebra of
$\mathfrak{so}_{b_{+},b_{-}}$. We shall need the following
\begin{prop}
Let $\tau\in\mathcal{H}$, $\lambda \in V$, and $v \in G(V)$ be as above, and
let $p$ be as in Theorem \ref{modularity}. Denote
$\frac{b_{+}}{2}+m_{+}-\frac{b_{-}}{2}-m_{-}$ by $k$. Then we have the equality
\[\Delta_{SO_{b_{+},b_{-}}^{+},\lambda}y^{\frac{b_{-}}{2}+m_{-}}F(\tau,v,p,
\lambda)=4\Delta_{k,\tau}y^{\frac{b_{-}}{2}+m_{-}}F(\tau,v,p,\lambda)+\]
\[+[(m_{+}-m_{-})(m_{+}-m_{-}+b_{+}-b_{-}-2)-(b_{+}-2)b_{-}]y^{\frac{b_{-}}{2}
+m_{-}}F(\tau,v,p,\lambda).\] \label{Deltalambdatau}
\end{prop}

\begin{proof}
Given $\tau\in\mathcal{H}$ define $g_{\tau}=\big(\frac{1}{\sqrt{y}}\binom{y\ \
x}{\ \ \ 1},\varphi\equiv+\frac{1}{\sqrt[4]{y}}\big) \in Mp_{2}(\mathbb{R})$.
Part $(i)$ of Theorem \ref{modularity} yields
$\rho_{V}(g_{\tau})F(i,v,p,\lambda)=y^{\frac{b_{+}}{4}+\frac{m_{+}}{2}+\frac{b_{
-}}{4}+\frac{m_{-}}{2}}F(\tau,v,p,\lambda)$. On the other hand, after extending
the Weil representation $\rho_{V}$ to the Lie algebra
$\mathfrak{sl}_{2}(\mathbb{R})$ of $Mp_{2}(\mathbb{R})$ and then to its
universal enveloping algebra, Lemma 1.4 of \cite{[Sn]} (with $m=2k$) and part
$(ii)$ of Theorem \ref{modularity} show that
\begin{equation}
\rho_{V}(C)y^{\frac{k}{2}+\frac{b_{-}}{2}+m_{-}}F(\tau,v,p,\lambda)=4\widetilde{
\Delta}_{\frac{k}{2},-\frac{k}{2},\tau}y^{\frac{k}{2}+\frac{b_{-}}{2}+m_{-}}
F(\tau,v,p,\lambda), \label{Sh14}
\end{equation}
where $C=2EF+2FE+H^{2}$ is the Casimir element of the universal enveloping
algebra of $\mathfrak{sl}_{2}(\mathbb{R})$. Now, the right hand side of equation
\eqref{Sh14} can be written as
$y^{\frac{k}{2}}\big(4\Delta_{k,\tau}+k(k-2)\big)y^{\frac{b_{-}}{2}+m_{-}}F(\tau
,v,p,\lambda)$. Moreover, Lemma 1.5 of \cite{[Sn]} shows that the action of
$\rho_{V}(C)$ coincides with that of
$\Delta_{SO_{b_{+},b_{-}}^{+}}+\frac{(b_{+}+b_{-})(b_{+}+b_{-}-4)}{4}$.
Substituting the value of $k$ in the constant completes the proof of the
proposition.
\end{proof}

\medskip

In the case $p=1$, Proposition \ref{Deltalambdatau} suffices to prove
Proposition 4.5 of \cite{[Bru]}. Indeed, in this case the action of
$SO_{b_{+},b_{-}}^{+}$ in the $\lambda$ variable coincides with its action on
the $v$ variable. Thus the actions of the Laplacians coming from these two
operations of $SO_{b_{+},b_{-}}^{+}$ coincide, so that one only needs to find
the form of $\Delta_{SO_{b_{+},b_{-}}^{+}}$ in the variable of the Grassmannian
$G(V)$. Here we consider the case of a non-trivial polynomial. There is thus no
evident connection between these two actions (and Laplacians). Indeed, $p$
depends on $v$ by the homogeneity condition, and this dependence affects
strongly the form which $\Delta_{SO_{b_{+},b_{-}}^{+},v}$ attains. However, we
now present a particular case in which the actions can be related.

\subsection{Grassmannians with Complex Structures}

We take $b_{+}=2$, where the Grassmannian carries the structure of a complex
manifold, which we now briefly describe (see Section 13 of \cite{[B1]} or
Section 3.2 of \cite{[Bru]} for more details). Indeed, in this case $G(V)$ is
diffeomorphic to a connected component of the analytically open subset which is
defined by the inequality $(Z_{V},\overline{Z_{V}})>0$ on the conic
$\mathbb{P}(V_{\mathbb{C}})_{0}=\{[Z_{V}] \in
\mathbb{P}(V_{\mathbb{C}})|Z_{V}^{2}=0\}$ (here $\mathbb{P}$ stands for the
associated projective space). The inverse image $P$ of this variety in
$V_{\mathbb{C}}$ corresponds, by taking the real and imaginary parts of a
complex vector, to the set of oriented, orthogonal pair of vectors of the same
positive norm. Such a pair of vectors forms a basis for $v_{+}$ in the image of
that point in $G(V)$. We may view $P$ as a $\mathbb{C}^{*}$-bundle over $G(V)$.

Given an indefinite vector space $V$, pick an isotropic vector $z \in V$. Then
$K_{\mathbb{R}}=z^{\perp}/\mathbb{R}z$ is non-degenerate of signature
$(b_{+}-1,b_{-}-1)$. By choosing $\zeta \in V$ with $(z,\zeta)=1$, we identify
$K_{\mathbb{R}}$ with $\{z,\zeta\}^{\perp}$. Vectors of $V$ may then be written
as triplets $(\eta,a,b)$, with $\eta \in K_{\mathbb{R}}$ and $a$ and $b$ in
$\mathbb{R}$. This symbol stands for the vector $\eta+a\zeta+bz$ (under the
identification of $K_{\mathbb{R}}$ with $\{z,\zeta\}^{\perp}$), which has norm
$\eta^{2}+a^{2}\zeta^{2}+2ab$. If $b_{+}=2$ then the choice of $z$ defines a
holomorphic section $G(V) \to P$ by picking the element pairing to 1 with $z$.
Moreover, $K_{\mathbb{R}}$ is then Lorentzian, so that the set of positive norm
vectors is the disjoint union of two cones. The choice of $z$ and of the
orientation on the bases determine one cone $C$ to be the positive one. Our
section then yields a biholomorphic diffeomorphism between $G(V)$ and the tube
domain $K_{\mathbb{R}}+iC$. Under this identification, the section maps $Z=X+iY
\in K_{\mathbb{R}}+iC$ to the element
$Z_{v,V}=\big(Z,1,\frac{-Z^{2}-\zeta^{2}}{2}\big)$ of $P$ (the inverse map is
subtracting $\zeta$ and projecting to $K_{\mathbb{C}}$). The real and imaginary
parts of this vector are $X_{v,V}=\big(X,1,\frac{Y^{2}-X^{2}-\zeta^{2}}{2}\big)$
and $Y_{v,V}=\big(Y,0,-(X,Y)\big)$ respectively. We have $X_{v,V} \perp Y_{v,V}$
and $X_{v,V}^{2}=X_{v,V}^{2}=Y^{2}>0$. Here $v$ stands for the element of
$G(L_{\mathbb{R}})$ in which $v_{+}$ is spanned by the latter two vectors.

\medskip

For non-negative integers $r$, $s$, and $t$, let
$P_{r,s,t}(Z,\lambda)=\frac{(\lambda,Z_{v,V})^{r}(\lambda,\overline{Z_{v,V}})^{t
}}{(Y^{2})^{s}}$. It is homogenous of degree $(r+t,0)$ with respect to $v$.
Hence the constant in Proposition \ref{Deltalambdatau} reduces, for
$p=P_{r,s,t}$, to $(r+t)(r+t-b_{-})$. Given an even lattice $L$ in $V$, we
denote the theta function $\Theta_{L}(\tau,v,P_{r,s,t})$ by
$\Theta_{L,r,s,t}(\tau,v)$.

Let $O^{+}(V)$ be the subgroup of $O(V)$ in which the orientation on the
positive definite part is preserved. It operates on $G(V) \cong
K_{\mathbb{R}}+iC$, with a factor of automorphy which we denote $j(\sigma,Z)$
for $\sigma \in O^{+}(V)$ and $Z \in K_{\mathbb{R}}+iC$. It is defined by the
equation \[\sigma(Z_{v,V})=j(\sigma,Z)Z_{\sigma v,V},\quad\mathrm{or\
equivalently}\quad j(\sigma,Z)=\big(\sigma(Z_{v,V}),z\big).\] Given integers $k$
and $l$ and a discrete subgroup $\Gamma$ of $O^{+}(V)$, we say that a function
$\Phi:K_{\mathbb{R}}+iC\to\mathbb{C}$ (or on $G(V)$) is an \emph{automorphic
form of weight $(k,l)$ with respect to $\Gamma$} if the equality \[\Phi(\sigma
Z)=j(\sigma,Z)^{k}\overline{j(\sigma,Z)}^{l}\Phi(Z)\] holds for every $Z \in
K_{\mathbb{R}}+iC$ and $\sigma\in\Gamma$. A standard argument shows that this is
equivalent to the definition of automorphy appearing in \cite{[B1]}, using
homogeneity and $\Gamma$-invariance on $P$. We now prove
\begin{prop}
For fixed $\tau\in\mathcal{H}$, the function $v\mapsto\Theta_{L,r,s,t}(\tau,v)$
is automorphic of weight $(s-r,s-t)$ with respect to the discriminant group
$\Gamma$ of $L$, namely the kernel of the natural map from $Aut^{+}(L)=Aut(L)
\cap O^{+}(L)$ to $Aut(L^{*}/L)$. \label{modOVTheta}
\end{prop}

\begin{proof}
The definition of $j(\sigma,Z)$ and the equality $\big(\Im(\sigma
Z)\big)^{2}=\frac{Y^{2}}{|j(\sigma,Z)|^{2}}$ from (the simple) Lemma 3.20 of
\cite{[Bru]} imply the equality
\begin{equation}
P_{r,s,t}(\sigma
Z,\lambda)=j(\sigma,Z)^{s-r}\overline{j(\sigma,Z)}^{s-t}P_{r,s,t}(Z,\sigma^{-1}
\lambda) \label{modOVP}
\end{equation}
for all $Z \in K_{\mathbb{R}}+iC$, $\lambda \in V$, and $\sigma \in O^{+}(V)$.
Now, $\Delta_{v}P_{r,s,t}=\Delta_{v_{+}}P_{r,s,t}$ is just $4rtP_{r-1,s-1,t-1}$.
As this operation preserves the differences $s-r$ and $s-t$, Equation
\eqref{modOVP} continues to hold if we replace $P_{r,s,t}$ by
$e^{-\Delta_{v}/8\pi y}P_{r,s,t}$. By the usual automorphic properties of the
expression
$\mathbf{e}\Big(\tau\frac{\lambda_{v_{+}}^{2}}{2}+\overline{\tau}\frac{\lambda_{
v_{-}}^{2}}{2}\Big)$, the validity of Equation \eqref{modOVP} extends also to
$F_{r,s,t}(\tau,Z,\lambda)=F(\tau,v,P_{r,s,t},\lambda)$. Now, as the operation
of $\sigma\in\Gamma$ preserves absolutely convergent sums on cosets of $L$
inside $L^{*}$, this completes the proof of the proposition.
\end{proof}

\medskip

For $\sigma \in O^{+}(V)$, let $[\sigma]_{s-r,s-t}$ denote the \emph{slash
operator} of weight $(k,l)$:
\[\Phi[\sigma]_{k,l}(Z)=\Phi(\sigma Z)j(\sigma,Z)^{-k}\overline{j(\sigma,Z)}^{\
-l}.\] Equation \eqref{modOVP} relates the usual action of $O^{+}(V)$ on the
$\lambda$ variable in $P_{r,s,t}$ or $F_{r,s,t}$ to its action on $Z$ via the
slash operators of weight $(s-r,s-t)$. As the operator
$\Delta_{SO_{b_{+},b_{-}}^{+},\lambda}$ has order 2 and it commutes with the
action of $SO_{b_{+},b_{-}}^{+}$ on $\lambda$, its action must be related to an
operator (of order 2) in the $Z$ variable which commutes with the action of
these slash operators. The standard theory of Laplacians and Casimir operators
in simple Lie groups shows that there should be only one such second order
differential operator (up to multiplicative and additive constants). Let
$\Delta^{G}$ be the operator which in an orthonormal basis for $K_{\mathbb{R}}$
takes the form
\[8\sum_{g,h}y_{g}y_{h}\partial_{g}\overline{\partial}_{h}-4Y^{2}\bigg(\partial_
{1}\overline{\partial}_{1}-\sum_{k>1}\partial_{k}\overline{\partial}_{k}\bigg)\]
(this is 8 times the operator $\Delta_{1}$ of \cite{[Na]} or $8\Omega$ in the
notation of \cite{[Bru]}), and define $D^{*}=\sum_{h}y_{h}\partial_{h}$ and
$\overline{D^{*}}=\sum_{h}y_{h}\overline{\partial}_{h}$. The differential
operator we are looking for is given in the following

\begin{lem}
The combination
\[\widetilde{\Delta}^{G}_{k,l}=\Delta^{G}-4ik\overline{D^{*}}+4ilD^{*}\]
commutes with the slash operators $[\sigma]_{k,l}$ for every $\sigma \in
O^{+}(V)$.
\label{DeltaGkl}
\end{lem}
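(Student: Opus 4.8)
The plan is to verify the commutation directly, in parallel with the classical fact (used above via the lemmas of \cite{[Sn]}) that the weight-$(r,s)$ Laplacian $\widetilde{\Delta}_{r,s}$ on $\mathcal{H}$ commutes with the weight-$(r,s)$ slash operators, but now on the tube domain $K_{\mathbb{R}}+iC\cong G(V)$ with the orthogonal automorphy factor $j(\sigma,Z)$. I would first isolate two inputs. (a) The operator $\Delta^{G}$, being $8$ times the $O^{+}(V)$-invariant Laplacian $\Delta_{1}$ of \cite{[Na]} (equivalently $8\Omega$ of \cite{[Bru]}), commutes with the weight-$(0,0)$ action $T_{\sigma}\colon\Phi\mapsto\Phi\circ\sigma$, i.e.\ $\Delta^{G}(\Phi\circ\sigma)=(\Delta^{G}\Phi)\circ\sigma$; since $\widetilde{\Delta}^{G}_{0,0}=\Delta^{G}$, this already settles $k=l=0$. (b) The factor $j(\sigma,Z)=\big(\sigma(Z_{v,V}),z\big)$ is holomorphic in $Z$, so $\overline{\partial}_{h}\log j=0=\partial_{g}\log\overline{j}$, and it satisfies the recorded relation $\big(\Im(\sigma Z)\big)^{2}=Y^{2}/|j(\sigma,Z)|^{2}$.

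Writing $\mu=j^{-k}\overline{j}^{-l}$ and $[\sigma]_{k,l}=M_{\mu}\circ T_{\sigma}$ (multiplication by $\mu$ following pullback by $\sigma$), the desired identity $\widetilde{\Delta}^{G}_{k,l}\,[\sigma]_{k,l}=[\sigma]_{k,l}\,\widetilde{\Delta}^{G}_{k,l}$ is equivalent to $M_{\mu}^{-1}\widetilde{\Delta}^{G}_{k,l}M_{\mu}=T_{\sigma}\widetilde{\Delta}^{G}_{k,l}T_{\sigma}^{-1}$. I would compute the left-hand side by the Leibniz rule. Writing $\Delta^{G}=\sum_{g,h}c_{gh}\partial_{g}\overline{\partial}_{h}$ with $c_{gh}=8y_{g}y_{h}-4Y^{2}s_{h}\delta_{gh}$ (where $s_{1}=1$, $s_{h}=-1$ for $h>1$, so $Y^{2}=\sum_{h}s_{h}y_{h}^{2}$), holomorphy of $j$ kills most Leibniz terms, leaving $\Delta^{G}$ plus the first-order operator $-k\sum_{g,h}c_{gh}(\partial_{g}\log j)\overline{\partial}_{h}-l\sum_{g,h}c_{gh}(\overline{\partial}_{h}\log\overline{j})\partial_{g}$ and the zeroth-order term $kl\sum_{g,h}c_{gh}(\partial_{g}\log j)(\overline{\partial}_{h}\log\overline{j})$; likewise $M_{\mu}^{-1}D^{*}M_{\mu}=D^{*}-k\,D^{*}\!\log j$ and $M_{\mu}^{-1}\overline{D^{*}}M_{\mu}=\overline{D^{*}}-l\,\overline{D^{*}}\!\log\overline{j}$.

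The right-hand side needs the behavior of $\widetilde{\Delta}^{G}_{k,l}$ under $T_{\sigma}$. Invariance gives $T_{\sigma}\Delta^{G}T_{\sigma}^{-1}=\Delta^{G}$, so only the first-order pieces move; but $D^{*}$ and $\overline{D^{*}}$ are \emph{not} $O^{+}(V)$-invariant, and I would compute their defects $T_{\sigma}D^{*}T_{\sigma}^{-1}-D^{*}$ and $T_{\sigma}\overline{D^{*}}T_{\sigma}^{-1}-\overline{D^{*}}$ from the chain rule for the holomorphic map $Z\mapsto\sigma Z$, whose Jacobian is governed by $j$, together with $\big(\Im(\sigma Z)\big)^{2}=Y^{2}/|j|^{2}$ and holomorphy of $j$. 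Matching the two sides turns the lemma into a finite set of pointwise identities among the $c_{gh}$, the $y_{h}$, $Y^{2}$, and the first logarithmic derivatives of $j$: the coefficients $-4ik$ and $+4il$ are precisely those for which the first-order defects from $[\Delta^{G},M_{\mu}]$ cancel against $-4ik$ and $+4il$ times the non-invariance defects of $\overline{D^{*}}$ and $D^{*}$, while the residual zeroth-order terms — the $kl$-contraction above and $4ikl(\overline{D^{*}}\!\log\overline{j}-D^{*}\!\log j)$ — are matched by the zeroth-order parts of those same defects.

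The step I expect to be the genuine obstacle is exactly this bookkeeping of defects: checking that the identities produced by differentiating $Y^{2}\circ\sigma=Y^{2}/|j|^{2}$ feed back into the $c_{gh}$ and the fields $D^{*},\overline{D^{*}}$ with the coefficients dictated by $\Delta^{G}$. One simplification worth exploiting: conjugation by a real power $(Y^{2})^{c}$ sends $[\sigma]_{k,l}$ to $[\sigma]_{k+c,l+c}$, and a parallel computation sends $\widetilde{\Delta}^{G}_{k,l}$ to $\widetilde{\Delta}^{G}_{k+c,l+c}$ up to an additive constant, so commutation for one weight propagates along the whole diagonal $k-l=\mathrm{const}$; this removes the $k+l$ direction and isolates the genuinely hard holomorphic-versus-antiholomorphic imbalance $k-l$, which no global real conjugation can absorb. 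As a consistency check — and an alternative proof — the operator $\widetilde{\Delta}^{G}_{k,l}$ should coincide, up to an additive constant, with the action on weight-$(k,l)$ functions of the $O^{+}(V)$-Casimir $\Delta_{SO_{2,b_{-}}^{+}}$ that enters through $\rho_{V}(C)$ in \eqref{Sh14}, whence centrality of the Casimir makes the commutation automatic; I would nonetheless present the coordinate computation, since the operator is handed to us explicitly in the coordinates of $K_{\mathbb{R}}+iC$.
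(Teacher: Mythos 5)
Your overall strategy --- conjugate $\widetilde{\Delta}^{G}_{k,l}$ by multiplication by $\mu=j^{-k}\overline{j}^{\,-l}$ on one side and by pullback $T_{\sigma}$ on the other, and match the first- and zeroth-order defects --- is viable and would prove the lemma, but as written it has a genuine gap: the decisive computation is deferred rather than performed. You never derive the non-invariance defects $T_{\sigma}D^{*}T_{\sigma}^{-1}-D^{*}$ and $T_{\sigma}\overline{D^{*}}T_{\sigma}^{-1}-\overline{D^{*}}$ for a general $\sigma$, and the claim that the coefficients $-4ik$ and $+4il$ are ``precisely those for which'' all defects cancel is exactly the content of the lemma, not an argument for it. The missing ingredient is concrete: one needs the transformation law of the tube-domain Jacobian, and in the hardest case the conjugated $D^{*}$ does \emph{not} stay in the span of $D^{*}$ and $\overline{D^{*}}$ with function coefficients --- new first-order operators $D=\sum_{h}\partial_{h}$ and $\overline{D}$ appear, and their cancellation against the $j^{-k-1}$- and $\overline{j}^{\,-l-1}$-terms from your Leibniz expansion is the crux. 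The paper makes the whole verification finite by invoking Nakajima's generator decomposition of $O^{+}(V)$ into $p_{\xi}$, $k_{a,A}$, and $w$: for $p_{\xi}$ and $k_{a,A}$ the factor $j(\sigma,Z)$ is constant in $Z$ and everything is immediate from the invariance of $\Delta^{G}$, $D^{*}$, $\overline{D^{*}}$, while for $w$ (where $j(w,Z)=Z^{2}/2$) the needed formulas $D^{*}(F\circ w)(Z)=\frac{W^{2}}{\overline{W}^{2}}(D^{*}F)(wZ)-2i\frac{(\Im W)^{2}}{\overline{W}^{2}}(DF)(wZ)$, its conjugate, and $D(F\circ w)=-(DF)\circ w$ are taken from \cite{[Na]}. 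Without this reduction (or an equivalent uniform Jacobian identity, which you neither state nor prove), your ``finite set of pointwise identities'' is not actually exhibited.

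Two further remarks on your auxiliary devices. The conjugation by $(Y^{2})^{c}$ along the diagonal is sound --- it is the relation $\Delta^{G}_{k,l}(Y^{2})^{t}=(Y^{2})^{t}\Delta^{G}_{k+t,l+t}$ recorded right after the lemma, combined with Lemma 3.20 of \cite{[Bru]} --- and it is a nice observation that it isolates the $k-l$ direction, but it does not touch the hard case. The proposed Casimir shortcut, however, is circular relative to the paper's development: the identification of the $\rho_{k,l}$-action of the Casimir with $\alpha\Delta^{G}_{k,l}+\beta$ is \emph{deduced from} this lemma in the proof of Proposition \ref{DeltatauZ} (via the uniqueness of second-order operators commuting with the slash action), so it cannot be used as an independent proof without first computing the Casimir in the tube-domain coordinates --- which is the same computation you are trying to avoid.
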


\begin{proof}
\cite{[Na]} has shown that $O^{+}(V)$ is generated by the elements $p_{\xi}$ for
$\xi \in K_{\mathbb{R}}$, $k_{a,A}$ for $a\in\mathbb{R}^{*}$ and $A \in
O^{sgn(a)}(K_{\mathbb{R}})$, and $w$. In the
$K_{\mathbb{R}}+\mathbb{R}\zeta+\mathbb{R}z$ notation, under the assumption
$\zeta^{2}=0$ (which is made in \cite{[Na]} and can always be satisfied by
replacing $\zeta$ by $\zeta-\frac{\zeta^{2}}{2}z$), these elements take the form
\[p_{\xi}=\left(\begin{array}{ccc}I & 0 & -\xi^{*} \\ \xi & 1 &
-\frac{\xi^{2}}{2} \\ 0 & 0 & 1\end{array}\right),\quad
k_{a,A}=\left(\begin{array}{ccc}A & 0 & 0 \\ 0 & \frac{1}{a} & 0 \\ 0 & 0 &
a\end{array}\right),\quad w=\left(\begin{array}{ccc}\widetilde{w} & 0 & 0 \\ 0
& 0 & -1 \\ 0 & -1 & 0\end{array}\right),\] where
$\xi^{*}:K_{\mathbb{R}}\to\mathbb{R}$ is defined by pairing with $\xi$ and
$\widetilde{w}$ is the reflection with respect to the hyperplane perpendicular
to a pre-fixed positive norm vector in $K_{\mathbb{R}}$. The sign condition on
$A$ ensures that $k_{a,A} \in O^{+}(V)$, and $w$ lies in $SO^{+}(V)$. Hence it
suffices to verify the commutativity of $\widetilde{\Delta}^{G}_{k,l}$ with
$[\sigma]_{k,l}$ for $\sigma$ being one of the elements $p_{\xi}$, $k_{a,A}$, or
$w$.

For $\sigma=p_{\xi}$ and for $\sigma=k_{a,A}$ the automorphy factor
$j(\sigma,Z)$ is a constant function of $Z$ (1 for $p_{\xi}$, $\frac{1}{a}$ for
$k_{a,A}$). Hence the assertion follows easily from the fact that the three
operators $\Delta^{G}$, $D^{*}$, and $\overline{D^{*}}$ are invariant under such
$\sigma$. For $\sigma=w$, the action of the part $-4ik\overline{D^{*}}+4ilD^{*}$
of $\widetilde{\Delta}^{G}_{k,l}$ on $F[\sigma]_{k,l}$ yields
\[-4ik\big[\overline{D^{*}}(F \circ w)\overline{j}^{\ -l}+(F \circ
w)\overline{D^{*}}\overline{j}^{\ -l}\big]j^{-k}+4il\big[D^{*}(F \circ
w)j^{-k}+(F \circ w)D^{*}j^{-k}\big]\overline{j}^{\ -l}.\] On the other hand,
$\Delta^{G}$ is the sum of two operators, so that apart from the expression
$\Delta^{G}(F \circ w)j^{-k}\overline{j}^{\ -l}$, the combination
$\Delta^{G}\big(F[\sigma]_{k,l}\big)$ involves
\[8\overline{D^{*}}(F \circ w)D^{*}j^{-k}\overline{j}^{\ -l}+8D^{*}(F \circ
w)j^{-k}\overline{D^{*}}\overline{j}^{\ -l}+8(F \circ
w)D^{*}j^{-k}\overline{D^{*}}\overline{j}^{\ -l}\] from the action of the
operator $\sum_{g,h}y_{j}y_{h}\partial_{g}\overline{\partial}_{h}$ and similar
three expressions from the action of the other operator. The automorphy factor
$j(w,Z)$ is $\frac{Z^{2}}{2}$. Evaluating $D^{*}j^{-k}$, $\overline{D^{*}}\
\overline{j}^{\ -l}$, and the other derivatives of $j^{k}$ and $\overline{j}^{\
-l}$ thus shows that $\widetilde{\Delta}^{G}_{k,l}\big(F[\sigma]_{k,l}\big)$
equals \[\Delta^{G}(F \circ w)j^{-k}\overline{j}^{\
-l}-2ik\overline{Z}^{2}\overline{D^{*}}(F \circ w)j^{-k-1}\overline{j}^{\
-l}+2ilZ^{2}D^{*}(F \circ w)j^{-k}\overline{j}^{\ -l-1}+\]
\[+4kY^{2}\overline{D}(F \circ w)j^{-k-1}\overline{j}^{\ -l}+4lY^{2}D(F \circ
w)j^{-k}\overline{j}^{\ -l-1}\] (and the coefficients of
$F(wZ)j^{-k-1}\overline{j}^{\ -l-1}$ cancel out). Now, the Theorem of
\cite{[Na]} shows that $\Delta^{G}(F \circ w)(Z)=(\Delta^{G}F)(wZ)$, and the
formulae concerning $D^{*}$ and $\overline{D^{*}}$ in \cite{[Na]} translate to
\[D^{*}(F \circ w)(Z)=\frac{W^{2}}{\overline{W}^{2}}(D^{*}F)(wZ)-2i\frac{(\Im
W)^{2}}{\overline{W}^{2}}(DF)(wZ)\] and \[\overline{D^{*}}(F \circ
w)(Z)=\frac{\overline{W}^{2}}{W^{2}}(\overline{D^{*}}F)(wZ)+2i\frac{(\Im
W)^{2}}{W^{2}}(\overline{D}F)(wZ),\] with $W=w(Z)$ (recall that the expressions
denoted $\delta$ and $d$ in \cite{[Na]} are $\frac{Z^{2}}{2}$ and
$\frac{Y^{2}}{2}$ respectively). One also verifies that $D(F \circ
w)(Z)=-(DF)(wZ)$ and
$\overline{D}(F \circ w)(Z)=-(\overline{D}F)(wZ)$, while
$\frac{W^{2}}{2}=\frac{2}{Z^{2}}$,
$\frac{\overline{W}^{2}}{2}=\frac{2}{\overline{Z}^{2}}$, and $(\Im
W)^{2}=\frac{4Y^{2}}{Z^{2}\overline{Z^{2}}}$. Therefore,
$\widetilde{\Delta}^{G}_{k,l}\big(F[\sigma]_{k,l}\big)(Z)$ equals
\[(\Delta^{G}F)(wZ)j^{-k}\overline{j}^{\ -l}-4ik\overline{D^{*}}F(wZ)j^{-k}
\overline{j}^{\ -l}+4ilD^{*}F(wZ)j^{-k}\overline{j}^{\ -l},\] which agrees with
the value $(\widetilde{\Delta}^{G}_{k,l}F)[\sigma]_{k,l}(Z)$. This proves the
lemma.
\end{proof}

\smallskip

The \emph{weight $(k,l)$ Laplacian on $G(V)$} is the operator
$\Delta^{G}_{k,l}=\widetilde{\Delta}^{G}_{k,l}-2(2k-b_{-})l$. Lemma 3.20 of
\cite{[Bru]} shows that multiplication by $(Y^{2})^{t}$ takes an automorphic
form of weight $(k+t,l+t)$ to an automorphic form of weight $(k,l)$. One also
verifies that the operators $\Delta^{G}_{k,l}$ satisfy the relation
$\Delta^{G}_{k,l}(Y^{2})^{t}=(Y^{2})^{t}\Delta^{G}_{k+t,l+t}$.

We remark that the results of \cite{[Na]} are stated for $b_{-}$ (or $q$ in the
notation of \cite{[Na]}) being at least 3. However, the proof holds equally well
for $q=2$, and the same applies for our Lemma \ref{DeltaGkl}. For $b_{-}=1$ the
Grassmannian $K_{\mathbb{R}}+iC$ is $\mathcal{H}$, the operator $\Delta^{G}$ is
the usual Laplacian $4y^{2}\partial\overline{\partial}$ on $\mathcal{H}$, and
$\Delta^{G}_{k,l}$ is $\Delta_{2k,2l}$. Therefore, Lemma \ref{DeltaGkl} holds
for any value of $b_{-}$.

\subsection{A Differential Equation for $\Theta_{L,r,s,t}$}

The following generalization of Proposition 4.5 of \cite{[Bru]} will turn out
very important for our purposes:

\begin{prop}
Let $L$ be an even lattice in the space $V$ of signature $(2,b_{-})$, and let
$k=1+\frac{b_{-}}{2}+r+t$. Then the theta function $\Theta_{L,r,s,t}$ satisfies
the differential equation
\[4\Delta_{k,\tau}y^{\frac{b_{-}}{2}}\Theta_{L,r,s,t}(\tau,Z)=\big[\Delta^{G}_{
s-r,s-t,Z}+2r(b_{-}-2t)\big]y^{\frac{b_{-}}{2}}\Theta_{L,r,s,t}(\tau,Z).\]
\label{DeltatauZ}
\end{prop}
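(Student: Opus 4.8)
The plan is to relate the two sides through the common operator $\Delta_{SO^{+}_{2,b_{-}}}$, which appears in equation \eqref{Deltalambdatau} acting on the $\lambda$-variable, and to transport its action from $\lambda$ to the Grassmannian variable $Z$ (equivalently $v$) via the homogeneity and modularity properties established above. First I would apply the operator $\Delta_{SO^{+}_{2,b_{-}},\lambda}$ summand-by-summand to the theta series $\Theta_{L,r,s,t}(\tau,v)$; since this operator is the Casimir of $\mathfrak{so}_{2,b_{-}}$ acting in $\lambda$ and the lattice sum $\sum_{\lambda\in L+\gamma}$ is preserved (up to the discriminant kernel), equation \eqref{Deltalambdatau}, specialized via the reduction $b_{+}=2$, $m_{+}=r+t$, $m_{-}=0$, with the constant simplifying to $m_{+}(m_{+}-b_{-})=(r+t)(r+t-b_{-})$, will give an identity expressing $\Delta_{SO^{+}_{2,b_{-}},\lambda}\,y^{b_{-}/2}\Theta_{L,r,s,t}$ in terms of $4\Delta_{k,\tau}\,y^{b_{-}/2}\Theta_{L,r,s,t}$ plus this explicit constant multiple of the same function.

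The crux is then to identify $\Delta_{SO^{+}_{2,b_{-}},\lambda}$, when acting on $\Theta_{L,r,s,t}$, with an operator in the $Z$-variable. By Lemma \ref{DeltaGkl}, the operator $\widetilde{\Delta}^{G}_{s-r,s-t}$ is (up to the Casimir's characterization as the unique second-order $O^{+}(V)$-invariant operator) the image on $G(V)$ of the orthogonal Casimir acting on functions carrying the automorphy weight $(s-r,s-t)$, which is precisely the weight of $\Theta_{L,r,s,t}(\tau,\cdot)$ by equation \eqref{modOVTheta}. The argument here parallels the $p=1$ case of Proposition 4.5 of \cite{[Bru]}: because $SO^{+}_{2,b_{-}}$ acts compatibly on $\lambda$ and on $v$ (the homogeneity of $P_{r,s,t}$ ties the two actions together through the factor of automorphy \eqref{modOVP}), the Laplacian built from either action must coincide on $\Theta_{L,r,s,t}$, and the $O^{+}(V)$-invariance characterization pins down the $Z$-side operator as $\widetilde{\Delta}^{G}_{s-r,s-t}$. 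Matching the additive normalization then replaces $\widetilde{\Delta}^{G}_{s-r,s-t}$ by $\Delta^{G}_{s-r,s-t}=\widetilde{\Delta}^{G}_{s-r,s-t}-2(2(s-r)-b_{-})(s-t)$.

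Assembling the pieces, I would equate the $\lambda$-side evaluation (namely $4\Delta_{k,\tau}y^{b_{-}/2}\Theta_{L,r,s,t}$ plus the constant from \eqref{Deltalambdatau}) with the $Z$-side evaluation (namely $\Delta^{G}_{s-r,s-t,Z}y^{b_{-}/2}\Theta_{L,r,s,t}$ plus the constant absorbed in passing from $\widetilde{\Delta}^{G}$ to $\Delta^{G}$), and the claimed equation will emerge once all additive constants are collected. The main obstacle I anticipate is bookkeeping the constants: one must carefully track the shift $\Delta_{r,s}y^{t}=y^{t}\Delta_{r+t,s+t}$ (and its $G(V)$-analogue $\Delta^{G}_{k,l}(Y^{2})^{t}=(Y^{2})^{t}\Delta^{G}_{k+t,l+t}$), together with the constant $(r+t)(r+t-b_{-})$ from \eqref{Deltalambdatau} and the normalizing constant $2(2(s-r)-b_{-})(s-t)$ hidden in $\Delta^{G}_{s-r,s-t}$, and verify that their difference collapses exactly to the stated residual term $2r(b_{-}-2t)$. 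The conceptual content — that both Laplacians are avatars of the same orthogonal Casimir — is secure by the uniqueness of the invariant second-order operator; the genuine labor lies in confirming that these explicit constants reconcile, which is where a sign error or a misplaced factor of $2$ (recall the $SL_{2}$-versus-$SO_{2,1}$ doubling noted after Lemma \ref{DeltaGkl}) would be easiest to make.
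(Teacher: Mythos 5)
Your overall strategy coincides with the paper's: reduce to the basic summand, view both Laplacians as avatars of the orthogonal Casimir transported between the $\lambda$-action and the $Z$-action via Equation \eqref{modOVP} and Lemma \ref{DeltaGkl}, and then combine with Equation \eqref{Deltalambdatau}. But there is a genuine gap at the point where you claim that the invariance characterization ``pins down the $Z$-side operator as $\widetilde{\Delta}^{G}_{s-r,s-t}$'' and that what remains is bookkeeping of the shifts you list. Uniqueness of the invariant second-order operator determines the Casimir's action via $\rho_{s-r,s-t}$ only \emph{up to unknown multiplicative and additive constants} $\alpha$ and $\beta$, and neither is fixed by the normalization data you enumerate. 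Compare the $\tau$-side, where the same Casimir acts as $4\widetilde{\Delta}_{\frac{k}{2},-\frac{k}{2}}$ (Equation \eqref{Sh14}), with multiplicative constant $4$ and the further additive discrepancy $\frac{n(n-4)}{4}$: the analogous constants on the $Z$-side are exactly what your plan leaves undetermined. Concretely, if your bookkeeping were exhaustive, the residual constant would be $2\big(2(s-r)-b_{-}\big)(s-t)-(r+t)(r+t-b_{-})$, which depends on $s$, whereas the correct constant $2r(b_{-}-2t)$ does not: taking $r=t=0$ and $s\neq0$ your expression gives $2(2s-b_{-})s\neq0$ while the proposition requires $0$. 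So the constants cannot ``reconcile'' by abstract collection; the true relation (recorded in the remarks after the proposition) is that on these functions the Casimir acts as $\Delta^{G}_{s-r,s-t}+(r-t)^{2}+b_{-}(r-t)$, an additive constant your list does not produce.

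The paper closes this gap by an explicit evaluation on a test vector, which is the step your proposal is missing. Having written $4\Delta_{k,\tau}y^{b_{-}/2}F_{r,s,t}=(\widetilde{\alpha}\Delta^{G}_{s-r,s-t}+\widetilde{\beta})y^{b_{-}/2}F_{r,s,t}$ with $\widetilde{\alpha},\widetilde{\beta}$ independent of $\lambda$ (this independence is why one works summand-by-summand, as you correctly propose), the paper chooses a basis of $K_{\mathbb{R}}$ whose first two vectors span a hyperbolic plane and takes $\lambda$ to be the second basis vector. Then $e^{-\Delta_{v}/8\pi y}(P_{r,s,t})$ evaluates in closed form, giving $F_{r,s,t}(\tau,Z,\lambda)=\sum_{j=0}^{\min\{r,t\}}\frac{j!}{(-2\pi)^{j}}\binom{r}{j}\binom{t}{j}\frac{z_{1}^{r-j}\overline{z_{1}}^{t-j}}{(Y^{2})^{s-j}}e^{-2\pi y|z_{1}|^{2}/Y^{2}}$, and a direct computation on this single function pins down $\widetilde{\alpha}=1$ and $\widetilde{\beta}=2r(b_{-}-2t)$. (Alternatively, the direct route sketched after the proposition, via Equations \eqref{direv} and \eqref{dirPrst}, verifies the same identity without any Casimir argument.) Without one of these explicit computations your argument is underdetermined by precisely the two constants in which the content of the proposition resides; also note that your reconstruction carries an unresolved $\pm$ sign in the passage from $\rho_{s-r,s-t}$ to $\Delta_{SO_{2,b_{-}}^{+},\lambda}$, which the same test-vector evaluation settles.
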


\begin{proof}
We follow the proof of Proposition 4.5 of \cite{[Bru]}, with the necessary
adjustments. It suffices to prove that the basic summand
$F_{r,s,t}(\tau,Z,\lambda)$ satisfies this differential equation for any
$\lambda$. Let $\rho_{k,l}$ be the representation of $O^{+}(V)$ on
$C^{\infty}(G(V))$ using the weight $(k,l)$ slash operators, namely
$\rho_{k,l}(\sigma)\Phi=\Phi[\sigma^{-1}]_{k,l}$, and extend it to the universal enveloping algebra of $\mathfrak{so}(V)$. Lemma \ref{DeltaGkl} shows that the
action of the Casimir operator of $O^{+}(V)$ via $\rho_{k,l}$ must be the same
as that of $\alpha\Delta^{G}_{k,l}+\beta$ for some constants $\alpha$ and
$\beta$. Equation \eqref{modOVP} and the paragraph following it imply the
equality
\[F_{r,s,t}(\tau,Z,\sigma^{-1}\lambda)=\rho_{s-r,s-t}(\sigma)F_{r,s,t}(\tau,Z,
\lambda)\] for every $\tau\in\mathcal{H}$, $Z \in K_{\mathbb{R}}+iC$, $\lambda
\in V$, and $\sigma \in O^{+}(V)$. Since the Casimir operator of $O^{+}(V)$ acts
on functions of $\lambda$ as $\Delta_{SO_{2,b_{-}}^{+}}$, we obtain
\[\Delta_{SO_{2,b_{-}}^{+},\lambda}F_{r,s,t}(\tau,Z,\lambda)=\pm(\alpha\Delta^{G
}_{s-r,s-t}+\beta)F_{r,s,t}(\tau,Z,\lambda).\] Proposition \ref{Deltalambdatau}
now yields
\[4\Delta_{k,\tau}y^{\frac{b_{-}}{2}}F_{r,s,t}(\tau,Z,\lambda)=(\widetilde{
\alpha}\Delta^{G}_{s-r,s-t}+\widetilde{\beta})y^{\frac{b_{-}}{2}}F_{r,s,t}(\tau,
Z,\lambda)\] for some constants $\widetilde{\alpha}$ and $\widetilde{\beta}$
which are independent of $\lambda$. Choose a basis for $K_{\mathbb{R}}$ in which
the first two basis elements span a hyperbolic plane and the rest are
orthonormal (or with common norm $-2$, as in \cite{[Bru]} and \cite{[Na]}), and
take $\lambda$ to be the second basis element. Then evaluating
$e^{-\Delta_{v}/8\pi y}(P_{r,s,t})$ shows that
\[F_{r,s,t}(\tau,Z,\lambda)=\sum_{j=0}^{\min\{r,t\}}\frac{j!}{(-2\pi)^{j}}\binom
{r}{j}\binom{t}{j}\frac{z_{1}^{r-j}\overline{z_{1}}^{t-j}}{(Y^{2})^{s-j}}e^{
-2\pi y\frac{|z_{1}|^{2}}{Y^{2}}},\] with $z_{1}$ being the first coordinate of
$Z$ in this basis. A straightforward computation shows that
$\widetilde{\alpha}=1$ and $\widetilde{\beta}=2r(b_{-}-2t)$. This proves the
proposition.
\end{proof}

We remark that all these results extend to the case where $P_{r,s,t}$ is
multiplied by $(\lambda_{v_{-}}^{2})^{h}$ for some integer $h$ (powers of
$\lambda_{v_{+}}^{2}$ just change the indices of $P_{r,s,t}$, hence produce no
new functions). Then the weight is $k=1+\frac{b_{-}}{2}+r+t-2h$, and the
constant $\widetilde{\beta}$ from Proposition \ref{DeltatauZ} becomes
$2(r-h)\big(b_{-}-2(t-h)\big)$. However, we shall not need this generalization
in this paper.

\medskip

In fact, as we are concerned with explicit functions and operators, Equation
\eqref{Sh14} and Propositions \ref{Deltalambdatau} and \ref{DeltatauZ} can be
obtained by direct evaluation of the corresponding derivatives. In addition, it
is expedient to evaluate the images of
$y^{\frac{b_{-}}{2}+m_{-}}F(\tau,v,p,\lambda)$ under the \emph{weight raising
operator} $R_{k}=2i\frac{\partial}{\partial\tau}+\frac{k}{y}$ and the
\emph{weight lowering operator}
$L=-2iy^{2}\frac{\partial}{\partial\overline{\tau}}$ (note
the sign difference relative to \cite{[Bru]}!) in general. The resulting
functions are
\begin{subequations}
\begin{align}
-2\pi y^{\frac{b_{-}}{2}+m_{-}}F(\tau,v,\lambda_{v_{+}}^{2}p,\lambda)-\frac{1}
{8\pi}y^{\frac{b_{-}}{2}+m_{-}-2}F(\tau,v,\Delta_{v_{-}}p,\lambda) &
\label{RkTheta} \\ \intertext{and} 2\pi
y^{2+\frac{b_{-}}{2}+m_{-}}F(\tau,v,\lambda_{v_{-}}^{2}p,\lambda)+\frac{1}{8\pi}
y^{\frac{b_{-}}{2}+m_{-}}F(\tau,v,\Delta_{v_{+}}p,\lambda) & \label{LTheta}
\end{align}
\end{subequations}
respectively. Thus similar formulae describe the action of these operators on
arbitrary theta functions with polynomials. One verifies that the action of
$\Delta_{SO_{b_{+},b_{-}}^{+},\lambda}$ on
$y^{\frac{b_{-}}{2}+m_{-}}F(\tau,v,p,\lambda)$ takes it to
$y^{\frac{b_{-}}{2}+m_{-}}q(\lambda)\mathbf{e}\big(\tau\frac{\lambda_{v_{+}}^{2}
}{2}+\overline{\tau}\frac{\lambda_{v_{-}}^{2}}{2}\big)$ with $q$ being the image
of $e^{-\Delta_{v}/8\pi y}(p)$ under the operator
\begin{equation}
\Delta_{SO_{b_{+},b_{-}}^{+}}+8\pi y\lambda_{v_{-}}^{2}I_{v_{+}}-8\pi
y\lambda_{v_{+}}^{2}I_{v_{-}}+4\pi yb_{+}\lambda_{v_{-}}^{2}-4\pi
yb_{-}\lambda_{v_{+}}^{2}-16\pi^{2}y^{2}\lambda_{v_{+}}^{2}\lambda_{v_{-}}^{2}.
\label{direv}
\end{equation}
The symbol $I_{U}$, for a vector space $U$, stands for the \emph{homogeneity
operator} $\sum_{j}u_{j}\frac{\partial}{\partial u_{j}}$ in some (hence any)
basis $(u_{j})_{j=1}^{\dim U}$ of $U$. Evaluating the action of
$\Delta_{k}=R_{k-2} \cdot L$ for $k=\frac{b_{+}}{2}+m_{+}-\frac{b_{-}}{2}-m_{-}$
on $y^{\frac{b_{-}}{2}+m_{-}}F(\tau,v,p,\lambda)$ yields the function described
in Equation \eqref{direv} up to the constant multiple of
$y^{\frac{b_{-}}{2}+m_{-}}F(\tau,v,p,\lambda)$ appearing in Proposition
\ref{Deltalambdatau}. In the case $b_{+}=2$ and the polynomial $P_{r,s,t}$, one
can check directly that the operator from Equation \eqref{direv} takes
$P_{r,s,t}$ to
\begin{equation}
-4\lambda_{-}^{2}a_{r,t}P_{r-1,s-1,t-1}-4\pi
yb_{-}P_{r+1,s+1,t+1}+2tb_{-}P_{r,s,t}, \label{dirPrst}
\end{equation}
where $a_{r,t}$ stands for \[(r-2\pi y\lambda_{v_{+}}^{2})(t-2\pi
y\lambda_{v_{+}}^{2})-2\pi y\lambda_{v_{+}}^{2}=rt-(r+t+1)2\pi
y\lambda_{v_{+}}^{2}+(2\pi y\lambda_{v_{+}}^{2})^{2}.\] A direct evaluation of
the action of the operator $\Delta^{G}_{s-r,s-t}+(r-t)^{2}+b_{-}(r-t)$ on
$P_{r,s,t}(Z,\lambda)\mathbf{e}\Big(\tau\frac{\lambda_{v_{+}}^{2}
}{2}+\overline{\tau}\frac{\lambda_{v_{-}}^{2}}{2}\Big)$ gives the expression
from Equation \eqref{dirPrst} multiplied by
$\mathbf{e}\Big(\tau\frac{\lambda_{v_{+}}^{2}}{2}+\overline{\tau}\frac{\lambda_{
v_{-}}^{2}}{2}\Big)$. Recall that $\Delta_{v_{+}}^{j}P_{r,s,t}$ is some constant
multiple of $P_{r-j,s-j,t-j}$ for any $j\leq\min\{r,t\}$, and the difference
$(r-j)-(t-j)$ equals $r-t$. It follows that
$\Delta^{G}_{s-r,s-t}+(r-t)^{2}+b_{-}(r-t)$ and
$\Delta_{SO_{b_{+},b_{-}}^{+},\lambda}$ give the same result also on
$F_{r,s,t}$. This argument suggests an alternative proof of Equation
\eqref{Sh14} and of Propositions \ref{Deltalambdatau} and \ref{DeltatauZ} (as
well as of Proposition 4.5 of \cite{[Bru]}), a proof which is independent of
Theorem \ref{modularity}, the results of \cite{[Sn]}, and theorems about actions
of Casimir operators and Laplacians in general.

\section{Theta Lifts with Polynomials \label{Lift}}

In this Section we evaluate the theta lifts of certain almost weakly holomorphic
modular forms, which will give the main tool for the arithmetic application
later (see Definitions \ref{pringen} and \ref{prinmer} below). Many parts of
this Section are very technical, and skipping most of it except the statements
of Theorem \ref{PhiLmm0} for general $b_{-}$ and Theorem \ref{b-=1mer} for
$b_{-}=1$ may suffice on the first reading.

\subsection{Modular Forms and their Theta Lifts}

Let $\Gamma \subseteq Mp_{2}(\mathbb{R})$ be a Fuchsian group of the first kind,
and let $\rho$ be a representation of $\Gamma$ on some finite-dimensional
complex vector space $V_{\rho}$. Given $k$ and $l$ in $\frac{1}{2}\mathbb{Z}$,
a \emph{modular form of weight $(k,l)$ and representation $\rho$ with respect to
$\Gamma$} is a real-analytic function $f:\mathcal{H} \to V_{\rho}$ which
satisfies the functional equation
\[f(M\tau)=j(M,\tau)^{k}\overline{j(M,\tau)}^{l}\rho(M)f(\tau)\] for every
$\tau\in\mathcal{H}$ and $M\in\Gamma$ (the metaplectic data makes the
half-integral powers well-defined). For example, part $(iii)$ of Theorem
\ref{modularity} states that $\Theta_{L}$ is modular of weight
$\big(\frac{b_{+}}{2}+m_{+},\frac{b_{-}}{2}+m_{-}\big)$ and representation
$\rho_{L}$ with respect to $Mp_{2}(\mathbb{Z})$. A function $f:\mathcal{H} \to
V_{\rho}$ is called \emph{almost holomorphic} if it takes the form
$f(\tau)=\sum_{k=0}^{k_{max}}\frac{f_{k}(\tau)}{y^{k}}$, for some integer
$k_{max}$, with the $f_{k}$ holomorphic functions. This condition is stable
under the action of $Mp_{2}(\mathbb{R})$. In case $\Gamma$ has cusps, a
\emph{weakly holomorphic} modular form is a modular form which is holomorphic on
$\mathcal{H}$ but may have poles at the cusps. Relaxing the requirement of
holomorphicity on $\mathcal{H}$ to almost holomorphicity yields \emph{almost
weakly holomorphic} modular forms. The weight raising operator $R_{k}$
increases the weight of a modular form of weight $k$ by 2, while the weight
lowering operator $L$ reduces it by 2. Both preserve almost holomorphicity and
almost weakly holomorphicity, with the latter operator annihilating (weakly)
holomorphic modular forms.

A modular form $f$ of any weight $(k,l)$ and representation $\rho_{L}$ with
respect to $Mp_{2}(\mathbb{Z})$ has a Fourier expansion
\begin{equation}
f(\tau)=\sum_{\gamma \in
L^{*}/L}\sum_{n\in\mathbb{Q}}c_{\gamma,n}(y)q^{n}e_{\gamma}, \label{Fourier}
\end{equation}
with the standard notation $q=\mathbf{e}(\tau)$. Here $c_{\gamma,n}$ are smooth
functions of $y$ which vanish unless $n\in\mathbb{Z}+\frac{\gamma^{2}}{2}$. If
$f$ is almost weakly holomorphic then the functions $c_{\gamma,n}$ are
polynomials of bounded degree in $\frac{1}{y}$, and if $f$ is weakly holomorphic
then they are constants. In both cases they vanish unless $n\gg-\infty$.

\medskip

Let $L$ be an even lattice of signature $(b_{+},b_{-})$. We take, for every $v
\in G(L_{\mathbb{R}})$, a polynomial $p_{v}$ on $L_{\mathbb{R}}$ which is
homogenous of degree $(m_{+},m_{-})$ with respect to $v$. Assume that $p_{v}$
depends smoothly on $v$. Let $F$ be a (not necessarily holomorphic) modular form
of weight $\frac{b_{+}}{2}+m_{+}-\frac{b_{-}}{2}-m_{-}$ and representation
$\rho_{L}$. Considering $\mathbb{C}[L^{*}/L]$ as a unitary space in which the
canonical basis is orthonormal, the function $\tau \mapsto
y^{\frac{b_{+}}{2}+m_{+}}\langle F(\tau),\Theta_{L}(\tau,v,p_{v})
\rangle_{\rho_{L}}$ is $Mp_{2}(\mathbb{Z})$-invariant for every $v$. Following
\cite{[B1]} and others, we define the \emph{theta lift of $F$} as the integral
\begin{equation}
\Phi_{L}(v,F,p_{v})=\int_{X(1)}y^{\frac{b_{+}}{2}+m_{+}}\langle
F(\tau),\Theta_{L}(\tau,v,p_{v})\rangle_{\rho_{L}}\frac{dxdy}{y^{2}}
\label{liftp}
\end{equation}
as a function of $v \in G(L_{\mathbb{R}})$. If $F$ decreases exponentially
towards the cusp, then the integral in Equation \eqref{liftp} converges for
every such $v$, yielding a smooth function on $G(L_{\mathbb{R}})$ (depending on
the polynomial $p:v \mapsto p_{v}$). If $F$ grows exponentially toward the cusp,
then the integral in Equation \eqref{liftp} diverges, but can be regularized as
follows. Let
\[D=\big\{\tau\in\mathcal{H}\big||\Re\tau|\leq1/2,|\tau|\geq1\big\}\] be the
classical fundamental domain for $SL_{2}(\mathbb{Z})$, and let $D_{w}$ be the
compact domain $\{\tau \in D|y \leq w\}$ for every $w>1$. Multiply the integrand
from Equation \eqref{liftp} by $y^{-s}$, carry out the integration over
$D_{w}$, and take the limit as $w\to\infty$. For $\lambda \in L^{*}$ with
$\lambda^{2}<0$ we define the sub-Grassmannian
\[\lambda^{\perp}=\{v \in G(L_{\mathbb{R}})|\lambda \in v_{-}\} \subseteq
G(L_{\mathbb{R}})\] of $G(L_{\mathbb{R}})$. It has real codimension $b_{+}$ in
$G(L_{\mathbb{R}})$, and in the case $b_{+}=2$ it has complex codimension 1
there. The results of Sections 6 and 7 of \cite{[B1]} give the following
\begin{thm}
Assume that the coefficients $c_{\gamma,n}(y)$ in Equation \eqref{Fourier}
satisfy the condition that for any two real numbers $\alpha>0$ and $\beta\geq0$,
the integral $\int_{0}^{\infty}c_{\gamma,n}(y)e^{-\alpha/y-\beta y}y^{-t}dy$
converges for $t\in\mathbb{C}$ with $\Re t\gg-\infty$ to give a function of $t$
which can be meromorphically continued to all $t\in\mathbb{C}$. Then the limit
from above produces, for any $v \in G(L_{\mathbb{R}})$, the restriction of a
meromorphic function of $s\in\mathbb{C}$ to some right half-plane in
$\mathbb{C}$. The constant term of this function at $s=0$ yields a function on
$G(L_{\mathbb{R}})$, which is smooth outside a locally finite union of the
sub-Grassmannians $\lambda^{\perp}$ defined above, and has singularities along
these sub-Grassmannians. \label{regthetalift}
\end{thm}

Note that a function is considered ``singular'' along a submanifold if it is not
smooth there, not only discontinuous.

Assume now that the modular form $F$ is almost weakly holomorphic, so that
Equation \eqref{Fourier} becomes \[F(\tau)=\sum_{\gamma \in
L^{*}/L}\sum_{k=0}^{k_{max}}\sum_{n\gg-\infty}c_{\gamma,n,k}\frac{q^{n}}{y^{k}}
e_{\gamma}\] for some integer $k_{max}$ with $c_{\gamma,n,k}\in\mathbb{C}$. The
polar part of $F$ is a finite sum, and we have
\begin{thm}
An almost weakly holomorphic modular form satisfies the condition of Theorem
\ref{regthetalift}. If such a modular form is expanded as above, then given $j$
and $k$ we define $\beta=\frac{b_{+}}{2}+m_{+}-j-k-1$. The singularity of the regularized theta lift $\Phi_{L}$ from Theorem \ref{regthetalift} along $\lambda^{\perp}$ then looks like
\[\sum_{\alpha\lambda \in
L^{*}}\sum_{j,k}\frac{c_{\alpha\lambda,\frac{\alpha^{2}\lambda^{2}}{2},k}\Delta_
{v}^{j}(\overline{p_{v}})(\alpha\lambda)}{(-8\pi)^{j}j!}\left\{\begin{array}{cc}
\displaystyle{\frac{\Gamma(\beta)}{(2\pi\alpha^{2}\lambda_{v_{+}}^{2})^{\beta}}}
& \beta\not\in-\mathbb{N} \\
\displaystyle{\frac{\ln(\alpha^{2}\lambda_{v_{+}}^{2})}{(-2\pi\alpha^{2}\lambda_
{v_{+}}^{2})^{\beta}(-\beta)!}} & \beta\in-\mathbb{N}.\end{array}\right.\]
\label{sing}
\end{thm}

The set $\mathbb{N}$ is assumed to include 0 in Theorem \ref{sing}.
\begin{proof}
The fact that $F$ satisfies the condition of Theorem \ref{regthetalift} follows
from Lemmas 7.2 and 7.3 of \cite{[B1]} (note the difference in conventions
arising from the fact that in Equation \eqref{Fourier} the coefficients
$c_{\gamma,n}(y)$ multiply $q^{n}e_{\gamma}$ while in \cite{[B1]} the notation
$c_{\gamma,n}(y)$ is used for the coefficient of $\mathbf{e}(nx)e_{\gamma}$).
The form of the singularity is now given in Theorem 6.2 of \cite{[B1]}. This proves the theorem.
\end{proof}
The sums in Theorem \ref{sing} are essentially finite: For $\alpha$ we consider
only coefficients from the (finite) polar part of $F$, for $j$ we need
$\Delta_{v}^{j}(\overline{p_{v}})$ not to vanish, and $0 \leq k \leq k_{max}$.

\medskip

If $p_{v}$ is homogenous of degree $(m_{+},m_{-})$ with respect to $v$ and
$\sigma \in O^{+}(V)$ then $p_{v}\circ\sigma^{-1}$ has the same homogeneity
degree with respect to $\sigma v$. Moreover, one has $\Delta_{\sigma
v}(p\circ\sigma^{-1})=(\Delta_{v}p)\circ\sigma^{-1}$. Section 6 of \cite{[B1]}
states that the equality \[\Phi_{L}(\sigma
v,F,p_{v}\circ\sigma^{-1})=\Phi_{L}(v,f,p_{v})\] holds wherever the regularized
theta lift of $F$ from Theorem \ref{regthetalift} is well-defined and $\sigma$
lies in the discriminant kernel $\Gamma$ from Proposition \ref{modOVTheta}. For
$p=1$ this gives the $\Gamma$-invariance of $\Phi_{L}$. For a general polynomial
$p_{v}$ the automorphic property of $\Phi_{L}$ depends on the behavior of $v
\mapsto p_{v}$ under $\Gamma$. The latter may be complicated in general, but for
the case $b_{+}=2$ and the polynomials $P_{r,s,t}$ defined above (or even for
$P_{r,s,t}(\lambda_{v_{-}}^{2})^{h}$), Proposition \ref{modOVTheta} implies
\begin{equation}
\Phi_{L,r,s,t}(\sigma
v,F)=j(\sigma,Z)^{s-t}\overline{j(\sigma,Z)}^{s-r}\Phi_{L,r,s,t}(v,F)
\label{modOVPhi}
\end{equation}
for any $Z \in K_{\mathbb{R}}+iC$ and $\sigma\in\Gamma$ (the complex conjugation
on $\Theta_{L}$ in $\Phi_{L}$ interchanges the powers of $j$ and
$\overline{j}$). Here and throughout, $\Phi_{L,r,s,t}(v,F)$ denotes
$\Phi_{L}(v,F,P_{r,s,t})$.

\smallskip

At this point we need to sort out some inaccuracies in the proofs of \cite{[B1]}
which are relevant for our discussion. In particular, the argument based on
Lemma 14.1 of this reference (specifically Corollary 6.3 and Corollary 14.2
there) can be replaced by the following

\begin{lem}
Let $C$ be a positive integer, and let $p$ be a polynomial of degree
smaller than $C$. Then $\sum_{j=0}^{C}(-1)^{j}\binom{C}{j}p(j)=0$.
\label{binompol}
\end{lem}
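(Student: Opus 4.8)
The plan is to recognize the alternating sum as, up to sign, the $C$th iterated forward difference of $p$ evaluated at $0$, and to exploit the fact that differencing strictly lowers the degree of a polynomial.

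First I would introduce the forward difference operator $(\Delta f)(x)=f(x+1)-f(x)$, acting on polynomials (or on functions on $\mathbb{Z}$), and prove by induction on $C$ the closed formula
\[(\Delta^{C}f)(0)=\sum_{j=0}^{C}(-1)^{C-j}\binom{C}{j}f(j),\]
the inductive step using Pascal's rule $\binom{C}{j}=\binom{C-1}{j-1}+\binom{C-1}{j}$ together with $\Delta^{C}f=\Delta^{C-1}(\Delta f)$. Since $(-1)^{-j}=(-1)^{j}$, multiplying through by $(-1)^{C}$ identifies the sum in the statement with $(-1)^{C}(\Delta^{C}p)(0)$.

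Next I would observe that $\Delta$ sends a polynomial of degree $d\geq 1$ to a polynomial of degree exactly $d-1$: if $p(x)=a_{d}x^{d}+\cdots$ then the leading $x^{d}$ terms of $p(x+1)$ and $p(x)$ cancel, leaving $d\,a_{d}x^{d-1}+\cdots$, while $\Delta$ annihilates constants. Iterating, $\Delta^{C}$ annihilates every polynomial of degree strictly smaller than $C$. Applying this to $p$ gives $(\Delta^{C}p)(0)=0$, whence the claimed vanishing.

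The main obstacle, such as it is, is purely bookkeeping: getting the binomial signs in the closed form for $\Delta^{C}$ correct and matching them to the sign convention in the statement. As an alternative that avoids the difference operator, one may reduce by linearity to monomials $p(x)=x^{k}$ with $k<C$ and evaluate $\sum_{j}(-1)^{j}\binom{C}{j}j^{k}$ by applying the operator $\big(x\tfrac{d}{dx}\big)^{k}$ to the identity $(1-x)^{C}=\sum_{j}(-1)^{j}\binom{C}{j}x^{j}$ and then setting $x=1$; since $(1-x)^{C}$ vanishes to order $C$ at $x=1$, each such derivative combination still vanishes there for $k<C$.
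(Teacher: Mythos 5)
Your proof is correct, and it takes a genuinely different route from the paper's. The paper argues by linearity: it expands $p$ in the binomial-coefficient basis $\binom{x}{r}=\prod_{i=0}^{r-1}(x-i)/r!$, reduces to $p(x)=\binom{x}{r}$ with $r<C$, and then kills the sum via the identity $\binom{C}{j}\binom{j}{r}=\binom{C}{r}\binom{C-r}{j-r}$ together with the vanishing of the alternating sum $\sum_{i}(-1)^{i}\binom{C-r}{i}=(1-1)^{C-r}$. You instead identify the whole sum with $(-1)^{C}(\Delta^{C}p)(0)$ for the forward difference operator $\Delta$, and conclude from the fact that $\Delta$ strictly lowers degree, so $\Delta^{C}$ annihilates all polynomials of degree less than $C$. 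The two arguments are close cousins---the paper's basis $\binom{x}{r}$ is exactly the basis adapted to $\Delta$, since $\Delta\binom{x}{r}=\binom{x}{r-1}$, so its computation is the operator argument made explicit in coordinates---but your version buys conceptual economy: no basis change, no binomial-product identity, just one induction for the closed form of $\Delta^{C}$ (where your sign bookkeeping, via $(-1)^{j}=(-1)^{C}(-1)^{C-j}$, is handled correctly) and the degree-lowering observation. The paper's proof, by contrast, is entirely self-contained binomial manipulation, which may be why the author chose it for a short lemma. Your alternative argument via applying $\big(x\frac{d}{dx}\big)^{k}$ to $(1-x)^{C}$ and evaluating at $x=1$ is also sound, since each application of the first-order operator lowers the order of vanishing at $x=1$ by at most one, leaving order at least $C-k>0$; this is a third standard route, again organized around reduction to monomials rather than to the binomial basis.
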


\begin{proof}
Let $\binom{x}{r}$ is the polynomial $\prod_{i=0}^{r-1}(x-i)/r!$, of degree
$r$. Since we can write $p(x)$ as $\sum_{r=0}^{\deg p}a_{r}\binom{x}{r}$, it
suffices to prove the claim for $p(x)=\binom{x}{r}$ with $0 \leq r<C$. In this
case only terms with $j \geq r$ contribute, and the contribution is given by
$\binom{C}{j}\binom{j}{r}=\binom{C}{r}\binom{C-r}{j-r}$. Hence we find that
\[\sum_{j=r}^{C}(-1)^{j}\binom{C}{j}\binom{j}{r}=(-1)^{r}\binom{C}{r}\sum_{i=0}^
{C-r}(-1)^{i}\binom{C-r}{i}=0,\] as $r<C$. This proves the lemma.
\end{proof}

Indeed, both Corollary 6.3 and Corollary 14.2 of \cite{[B1]} involve sums over
binomial coefficients of the form $\binom{A+D-2j}{A-2j}=\binom{A+D-2j}{D}$ for
$A$ and $D$ non-negative integers with $D<C$ which are independent of $j$. This
expression is a polynomial function of $j$ of degree $D<C$, so that our Lemma
\ref{binompol} applies for both cases.

Somewhat more disturbing are the assertions in Corollary 6.3 and Theorem 10.3 of
this reference which state that certain expressions are polynomials in an
oriented norm 1 vector $v_{1}$ in a Lorentzian space. Indeed, in our notation
(with $v \mapsto p_{v}$), the ``wall crossing formula'' from Corollary 6.3
states that the difference between the values of the theta lift on two adjacent
Weyl chambers $W$ and $\widetilde{W}$ with separating ``wall'' $\lambda^{\perp}$
for $(\lambda,W)>0$ is
\begin{equation}
\sum_{x\lambda \in
L^{*}}\sum_{j,k}\frac{4c_{x\lambda,\frac{x^{2}\lambda^{2}}{2},k}\Delta_{v}^{j
}(\overline{p_{v}})(x\lambda)\Gamma\big(m_{+}-j-k-\frac{1}{2}\big)}{(-8\pi)^{j}
j!}(\sqrt{2\pi}x(\lambda,v_{1}))^{1+2j+2k-2m_{+}}, \label{wallcross}
\end{equation}
with only $x>0$ and with $v_{1}$ being an oriented norm 1 vector spanning
$v_{+}$. The proof there shows that only positive powers of $(\lambda,v_{1})$
appear. However, this is not necessarily a polynomial in $v_{1}$ because of the
unknown dependence of $p_{v}$ on $v$ (or on $v_{1}$). The fact that in
\cite{[B1]} one does not work with $v \in G(M)$ but with $v$ an isometry from
$L_{\mathbb{R}}$ to $\mathbb{R}^{b_{+},b_{-}}$ does not overcome this problem,
since then the expression in Equation \eqref{wallcross} depends on $v$ as an
isometry and not only on its image in $G(L_{\mathbb{R}})$. Hence this expression
cannot be described as a function of $v_{1}$ alone. Only the smoothness of this
difference as a function on all of $G(L_{\mathbb{R}})$ survives (at least in our
conventions with $p_{v}$). On the other hand, when
$p_{v}(\lambda)=(\lambda,v_{1})^{m_{+}}$ of homogeneity degree $(m_{+},0)$ (and
no multiplying constant), the expression in Equation \eqref{wallcross} is indeed
the restriction of a polynomial on $L_{\mathbb{R}}$ of the asserted degree to
the set of vectors of the form $v_{1}$.

\smallskip

The decomposition of $p$ appearing just above Lemma 5.1 of \cite{[B1]} takes, in
our notation, the following form. Given $v \in G(L_{\mathbb{R}})$, a polynomial
$p_{v}$ with the usual homogeneity property, and natural numbers $h_{+}$ and
$h_{-}$, there is a (unique) polynomial $p_{v,h_{+},h_{-}}$ on $K_{\mathbb{R}}$
such that the equality
\[p_{v}(\lambda)=\sum_{h_{+},h_{-}}(\lambda,z_{v_{+}})^{h_{+}}(\lambda,z_{v_{-}}
)^{h_{-}}p_{v,h_{+},h_{-}}\bigg[\bigg(\lambda-\frac{(\lambda,z)z_{v_{+}}}{z_{v_{
+}}^{2}}\bigg)\bigg/\mathbb{R}z\bigg]\] holds for every $\lambda \in
L_{\mathbb{R}}$. $p_{v,h_{+},h_{-}}$ is homogenous of degree
$(m_{+}-h_{+},m_{-}-h_{-})$ with respect to the decomposition $w$ of
$K_{\mathbb{R}}$ into the images $w_{\pm}$ of $z_{v_{\pm}}^{\perp} \subseteq
v_{\pm}$ in $K_{\mathbb{R}}$. Note that it depends on $v$ (not only on $w$),
and the map $v \mapsto w$ is not injective. The map
$\lambda\mapsto\big(\lambda-\frac{(\lambda,z)z_{v_{+}}}{z_{v_{+}}^{2}}
\big)\big/\mathbb{R}z$ corresponds to the map denoted $w$ in \cite{[B1]}. A
careful verification of Sections 5 and 7 of \cite{[B1]} shows that the reduction
formula in Theorem 7.1 of \cite{[B1]} holds in our notation with each
$p_{w,h_{+},h_{-}}$ replaced by $p_{v,h_{+},h_{-}}$. Now, Theorem 10.2 of
\cite{[B1]} implies that the theta lift $\Phi_{L}$ is a smooth function inside
any Weyl chamber. However, the assertion that it is a polynomial does not
necessarily hold, because of the same problem discussed in the previous
paragraph. Only in the special case in which
$p_{v}(\lambda)=(\lambda,v_{1})^{m_{+}}$, where we have $h_{-}=0$ and
$p_{v,m_{+},0}=1$, the assertion of Theorem 10.3 of \cite{[B1]} holds as stated.
This is related to the fact that the equality $p_{v}\circ\sigma^{-1}=p_{\sigma
v}$ holds for this $p_{v}$, since $\Gamma$-invariance is used in the proof of
that theorem.

These remarks show that one must be careful when investigating properties of
lifts of modular forms using theta functions with polynomials. However, in the
applications appearing in \cite{[B1]} one considers only the case $p=1$ (in
Sections 11, 13, and 15 there), or some multiple of
$\eta\mapsto(\eta,v_{1})^{m_{+}}$ (in Section 14). Hence the results of these
sections hold as stated.

\subsection{Differential Properties of Certain Theta Lifts}

Let
\[\delta_{l}=-\frac{R_{l}}{4\pi}=\frac{\partial_{\tau}}{2\pi i}-\frac{l}{4\pi
y}\] be the (normalized) weight raising operator for weight $l$, where
$\partial_{\tau}$ denotes $\frac{\partial}{\partial\tau}$ from now on. A
well-known formula for the composition of these operators, which is easily
proved by induction, states that
\[\delta_{l}^{m}=\delta_{l+2m-2}\circ\ldots\circ\delta_{l}=\sum_{k}\binom{m}{k}
\Bigg[\prod_{s=m-k}^{m-1}(l+s)\Bigg]\bigg(\frac{-1}{4\pi
y}\bigg)^{k}\bigg(\frac{\partial_{\tau}}{2\pi i}\bigg)^{m-k}\] (see, for
example, Equation (56) in \cite{[Za]}). Moreover, $\delta_{l}$ sends
eigenfunctions of (minus) the weight $l$ Laplacian $\Delta_{l}$ having eigenvalue $\lambda$ to eigenfunctions of (minus) $\Delta_{l+2}$ with eigenvalue $\lambda+l$. Hence the $\delta_{l}^{m}$-images of the former functions have
the eigenvalue $\lambda+m(l+m-1)$. In particular, if
$f=\sum_{\gamma,n}c_{\gamma,n}q^{n}e_{\gamma}$ is weakly holomorphic of weight
$1-\frac{b_{-}}{2}-m$ and representation $\rho_{L}$ for some even lattice $L$ of
signature $(2,b_{-})$ then
\begin{equation}
F=\delta_{1-\frac{b_{-}}{2}-m}^{m}f=\sum_{\gamma,n,k}c_{\gamma,n,k}\frac{q^{n}
}{y^{k}}e_{\gamma}, \quad
c_{\gamma,n,k}=\binom{m}{k}n^{m-k}\cdot\prod_{r=0}^{k-1}\bigg(r+\frac{b_{-}}{2}
\bigg)\cdot\frac{c_{\gamma,n}}{(4\pi)^{k}} \label{coeff}
\end{equation}
is an almost weakly holomorphic modular form of weight $1-\frac{b_{-}}{2}+m$ and
representation $\rho_{L}$ which is an eigenfunction of (minus)
$\Delta_{1-\frac{b_{-}}{2}+m}$ with eigenvalue $-\frac{mb_{-}}{2}$.

\medskip

An important feature of $\frac{i^{m}}{2}\Phi_{L,m,m,0}(v,F)$ is that it is an
eigenfunction of (minus) the operator $\Delta^{G}_{m}=\Delta^{G}_{m,0}$. The
proof is similar to the results appearing in Section 4.1 of \cite{[Bru]}.
However, as the regularization in \cite{[Bru]} is different from ours, we give
the proofs of all assertions, with an emphasis on the differences relative to
\cite{[Bru]}.

\smallskip

\begin{lem}
Let $f$ and $g$ be modular forms of weights $k$ and $k+2$ respectively, and
representation $\rho_{L}$. The equality
\[\int_{D_{w}}\!\!\!\!\big(y^{k+2}\langle R_{k}f,g \rangle+y^{k}\langle
f,Lg\rangle\big)y^{-s}d\mu\!=\!s\!\!\int_{D_{w}}\!\!\!\!y^{k+1}\langle
f,g\rangle
y^{-s}d\mu-\!\int_{\mathbb{R}/\mathbb{Z}+iw}\!\!\!\!\!\!\!w^{k}\langle
f,g\rangle w^{-s}dx\] holds for any $w>1$ and $s\in\mathbb{C}$.
\label{intRkL}
\end{lem}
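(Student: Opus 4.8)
The plan is to rewrite the left-hand integrand as the sum of a total $\tau$-derivative and a multiple of the integrand appearing in the first term on the right, and then to evaluate the total-derivative part by Stokes' theorem on the compact domain $D_w$. Since $D_w$ is compact, every integral in sight converges, so no growth hypotheses on $f$ and $g$ at the cusp are needed and all manipulations are legitimate.

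First I would unwind the definitions. Writing $f=\sum_\gamma f_\gamma e_\gamma$, $g=\sum_\gamma g_\gamma e_\gamma$, and using that $\langle\cdot,\cdot\rangle_{\rho_L}$ is $\sum_\gamma f_\gamma\overline{g_\gamma}$, the operator $R_k=2i\partial_\tau+\frac{k}{y}$ contributes $2i(\partial_\tau f_\gamma)\overline{g_\gamma}+\frac{k}{y}f_\gamma\overline{g_\gamma}$ in each coordinate, while $L=-2iy^2\partial_{\overline\tau}$ contributes, via $\overline{\partial_{\overline\tau}g_\gamma}=\partial_\tau\overline{g_\gamma}$, the term $2iy^2f_\gamma\partial_\tau\overline{g_\gamma}$ to $\langle f,Lg\rangle$. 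Collecting these and abbreviating $\Psi=\langle f,g\rangle$ yields
\[y^{k+2}\langle R_k f,g\rangle+y^k\langle f,Lg\rangle=2iy^{k+2}\partial_\tau\Psi+ky^{k+1}\Psi,\]
the two derivative pieces having combined into $\partial_\tau$ of the product. Here the precise sign in $L=-2iy^2\partial_{\overline\tau}$ (the one flagged in the text) is essential: with the opposite sign the two terms would not assemble into a derivative of $\Psi$.

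Next I would multiply by $y^{-s-2}$ to pass to the coefficient of $dx\,dy$ against $d\mu=\frac{dx\,dy}{y^2}$, and use $2i\partial_\tau=\partial_y+i\partial_x$ to write this coefficient as $\partial_y(y^{k-s}\Psi)+\partial_x(iy^{k-s}\Psi)+sy^{k-s-1}\Psi$. The last summand integrates to $s\int_{D_w}y^{k+1}\Psi\,y^{-s}d\mu$, which is the first term on the right. The first two summands are exactly the exterior derivative of the one-form $\eta=-y^{k-s}\Psi\,d\overline\tau$, so by Stokes' theorem $\int_{D_w}d\eta=\int_{\partial D_w}\eta$.

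The crux --- and the step I expect to be the main obstacle --- is to show that on $\partial D_w$ only the top edge $\{y=w\}$ contributes. The two vertical sides cancel because $\eta$ is genuinely invariant under $T$: since $j(T,\tau)=1$ and $\rho_L(T)$ is unitary, $\Psi$ is $1$-periodic and the $y^{-s}$ factor is unaffected, while the sides are traversed in opposite directions. The two halves of the lower arc $|\tau|=1$ cancel because $\eta$ is invariant under $S$ \emph{along that arc}: one computes $S^*\eta=|j(S,\tau)|^{2s}\eta$ in general, and although the $y^{-s}$ twist breaks invariance off the arc, one has $|j(S,\tau)|=|\tau|=1$ exactly on the unit circle, where $S$ interchanges the two halves with opposite orientation. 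This cancellation of the $s$-twisted form on the arc is the delicate point that makes the identity hold for all $s$ and not only at $s=0$. What remains is the top edge, on which $dy=0$ forces $\eta=-w^{k-s}\Psi\,dx$; integrating this in the induced boundary orientation of $\partial D_w$ gives precisely $-\int_{\mathbb{R}/\mathbb{Z}+iw}w^k\langle f,g\rangle\,w^{-s}dx$, which together with the $s$-term completes the proof.
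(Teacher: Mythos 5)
You follow precisely the paper's own route, which is Bruinier's Lemma 4.2 argument adapted to the $y^{-s}$-truncated setting: writing $\Psi=\langle f,g\rangle$, you correctly establish the pointwise identity $y^{k+2}\langle R_{k}f,g\rangle+y^{k}\langle f,Lg\rangle=2iy^{k+2}\partial_{\tau}\Psi+ky^{k+1}\Psi$, split off the term $sy^{k-s-1}\Psi$ (this is exactly the paper's ``difference between $y^{-s}d\omega$ and $d(y^{-s}\omega)$''), and identify the remainder as $d\eta$ for $\eta=-y^{k-s}\Psi\,d\overline{\tau}$. Your treatment of $\partial D_{w}$ is also exactly the point the paper's proof singles out: $T$-invariance kills the vertical sides, and although $S^{*}\eta=|j(S,\tau)|^{2s}\eta$ is not $\eta$ in general, on the arc $|\tau|=1$ one has $|j(S,\tau)|=|\tau|=1$, so the two halves of the arc cancel --- this is the paper's remark that $y^{-s}$ restricted to $|\tau|=1$ is $S$-invariant because $\Im(S\tau)=y/|\tau|^{2}$.

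The one step that fails is the final orientation claim. Since the left-hand side and the $s$-term are ordinary integrals against the positive measure $d\mu$, the domain $D_{w}$ must carry the orientation $dx\wedge dy$, and its induced boundary orientation traverses the top edge from $x=1/2$ to $x=-1/2$; hence $\int_{\mathrm{top}}\eta=\int_{1/2}^{-1/2}\bigl(-w^{k-s}\Psi\bigr)dx=+\int_{-1/2}^{1/2}w^{k-s}\Psi\,dx$, so your derivation, carried out consistently, produces the boundary term with a \emph{plus} sign, not the minus sign you assert in your last sentence. That the plus is in fact the correct sign under this paper's conventions can be checked at $s=0$ with $f=1$ (weight $0$) and $g=E_{2}-\frac{3}{\pi y}$ (weight $2$), scalar valued (take $L$ unimodular so that $\rho_{L}$ is trivial): then $R_{0}f=0$ and $Lg=3/\pi$, so the left-hand side equals $\frac{3}{\pi}\mathrm{vol}(D_{w})=1-\frac{3}{\pi w}>0$, whereas $-\int_{\mathbb{R}/\mathbb{Z}+iw}\overline{g}\,dx=-\bigl(1-\frac{3}{\pi w}\bigr)<0$. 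In other words, the minus sign in the printed statement is itself a slip inherited from the sign conventions of \cite{[Bru]} (the paper explicitly warns of ``some sign differences,'' e.g.\ in the definition of $L$), and it is harmless downstream, since Lemmas \ref{intDeltak}--\ref{Lapmov} use only that the line integral vanishes as $w\to\infty$ and that the other term carries the factor $s$. But as written, your closing sentence silently reverses the boundary orientation in order to land on the stated sign; a careful write-up should either prove the identity with $+$ or flag the discrepancy explicitly.
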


\begin{proof}
For the proof, see Lemma 4.2 \cite{[Bru]} (with some sign differences). The
first term on the right hand side arises from the difference between
$y^{-s}d\omega$ and $d(y^{-s}\omega)$, for $\omega$ being the
$SL_{2}(\mathbb{Z})$-invariant 1-form $y^{k}\langle f,\overline{g} \rangle
d\overline{\tau}$ appearing in \cite{[Bru]}. Note that while $y^{-s}\omega$ is
not $Mp_{2}(\mathbb{Z})$-invariant in general, the fact that $y^{-s}$ is
$T$-invariant and its restriction to the curve $|\tau|=1$ is also $S$-invariant
(since $\Im(S\tau)=\frac{y}{|\tau|^{2}}$) allows us to apply the argument also
in this case. This proves the lemma.
\end{proof}

Using Lemma \ref{intRkL} twice yields the following analog of Lemma 4.3 of
\cite{[Bru]}:

\begin{lem}
Let $f$ and $g$ be two modular forms of weight $k$ and representation
$\rho_{L}$, and let $w>1$ and $s\in\mathbb{C}$ be given. The difference
\[\int_{D_{w}}y^{k}\langle\Delta_{k}f,g\rangle
y^{-s}d\mu-\int_{D_{w}}y^{k}\langle f,\Delta_{k}g \rangle y^{-s}d\mu\] equals
\[s\int_{D_{w}}y^{k-1}\big(\langle Lf,g\rangle-\langle
f,Lg\rangle\big)y^{-s}d\mu-\int_{\mathbb{R}/\mathbb{Z}+iw}w^{k-2}\big(\langle
Lf,g\rangle-\langle f,Lg\rangle\big)w^{-s}dx.\] \label{intDeltak}
\end{lem}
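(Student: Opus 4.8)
The plan is to reduce everything to two applications of Lemma \ref{intRkL}, using the factorization $\Delta_{k}=R_{k-2}\cdot L$ recorded at the end of Section \ref{RelDO}. First I note that since $L$ lowers the weight by $2$ and preserves the representation $\rho_{L}$, the functions $Lf$ and $Lg$ are again modular forms of weight $k-2$ and representation $\rho_{L}$, so Lemma \ref{intRkL} may legitimately be applied to pairs assembled from them.

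For the first integral $\int_{D_{w}}y^{k}\langle\Delta_{k}f,g\rangle y^{-s}d\mu$ I would write $\Delta_{k}f=R_{k-2}(Lf)$ and apply Lemma \ref{intRkL} with its ``$f$'' taken to be $Lf$ (of weight $k-2$), its ``$g$'' taken to be $g$ (of weight $k$), and its weight parameter equal to $k-2$. This expresses the integral as
\[\int_{D_{w}}y^{k}\langle\Delta_{k}f,g\rangle y^{-s}d\mu=-\int_{D_{w}}y^{k-2}\langle Lf,Lg\rangle y^{-s}d\mu+s\int_{D_{w}}y^{k-1}\langle Lf,g\rangle y^{-s}d\mu-\int_{\mathbb{R}/\mathbb{Z}+iw}w^{k-2}\langle Lf,g\rangle w^{-s}dx.\]

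For the second integral $\int_{D_{w}}y^{k}\langle f,\Delta_{k}g\rangle y^{-s}d\mu$ the operator $\Delta_{k}$ again factors as $R_{k-2}L$, but now it sits in the second slot of the Hermitian pairing, in which $\langle\cdot,\cdot\rangle_{\rho_{L}}$ is conjugate-linear; hence $R_{k-2}$ cannot be moved off directly by Lemma \ref{intRkL}, whose $R$-term lives in the first slot. The device is to use the Hermitian symmetry $\langle f,\Delta_{k}g\rangle=\overline{\langle\Delta_{k}g,f\rangle}$, apply Lemma \ref{intRkL} to the pair $(Lg,f)$ (again at weight $k-2$) but with parameter $\overline{s}$, and then conjugate the resulting identity back. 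Because $y$ and $w$ are real, $k$ is a real half-integer, and $\overline{s}$ ranges over all of $\mathbb{C}$ as $s$ does, conjugation turns $y^{-\overline{s}}$ into $y^{-s}$, turns the scalar $\overline{s}$ into $s$, and turns each conjugated pairing back into $\langle f,Lg\rangle$, $\langle Lf,Lg\rangle$, and so on. The outcome is the mirror identity
\[\int_{D_{w}}y^{k}\langle f,\Delta_{k}g\rangle y^{-s}d\mu=-\int_{D_{w}}y^{k-2}\langle Lf,Lg\rangle y^{-s}d\mu+s\int_{D_{w}}y^{k-1}\langle f,Lg\rangle y^{-s}d\mu-\int_{\mathbb{R}/\mathbb{Z}+iw}w^{k-2}\langle f,Lg\rangle w^{-s}dx.\]

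Subtracting the two displays then proves the lemma: the symmetric double-lowering terms $-\int_{D_{w}}y^{k-2}\langle Lf,Lg\rangle y^{-s}d\mu$ are identical in both and cancel, the $s$-terms combine into $s\int_{D_{w}}y^{k-1}\big(\langle Lf,g\rangle-\langle f,Lg\rangle\big)y^{-s}d\mu$, and the boundary terms combine into $-\int_{\mathbb{R}/\mathbb{Z}+iw}w^{k-2}\big(\langle Lf,g\rangle-\langle f,Lg\rangle\big)w^{-s}dx$, which is exactly the asserted difference. I expect the only genuinely delicate step to be the conjugation bookkeeping in the second application---keeping track of the conjugate-linearity of $\langle\cdot,\cdot\rangle_{\rho_{L}}$ in its second argument together with the substitution $s\mapsto\overline{s}$---and confirming that the two $\langle Lf,Lg\rangle$ contributions really appear with equal sign so as to cancel; everything else is a direct transcription of Lemma \ref{intRkL}.
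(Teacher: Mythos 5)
Your proof is correct and is exactly the argument the paper intends: the paper derives this lemma by ``using Lemma \ref{intRkL} twice'' (as in Lemma 4.3 of \cite{[Bru]}), i.e.\ by writing $\Delta_{k}=R_{k-2}\cdot L$ and applying Lemma \ref{intRkL} at weight $k-2$ to the pairs $(Lf,g)$ and $(Lg,f)$, with the $\langle Lf,Lg\rangle$ terms cancelling in the difference. Your conjugation bookkeeping for the second slot --- applying the lemma at $\overline{s}$ and conjugating back, which is legitimate since Lemma \ref{intRkL} holds for all $s\in\mathbb{C}$ --- is the right way to handle the conjugate-linearity of $\langle\cdot,\cdot\rangle_{\rho_{L}}$.
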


We are interested in the case where
$g(\tau)=y^{\frac{b_{-}}{2}+m_{-}}\Theta_{L}(\tau,v,p_{v})$ (so that
$k=\frac{b_{+}}{2}+m_{+}-\frac{b_{-}}{2}-m_{-}$). The line integrals at the
limit $w\to\infty$ are dealt with in the following

\begin{lem}
Let $L$ be an even lattice of signature $(b_{+},b_{-})$, let $v$ be an element
of $G(L_{\mathbb{R}})$ which does not belong to any $\lambda^{\perp}$ for
$\lambda \in L^{*}$, and let $p$ be a polynomial on $L_{\mathbb{R}}$ which is
homogenous of degree $(m_{+},m_{-})$ with respect to $v$. Let $F$ be a modular
form of weight $\frac{b_{+}}{2}+m_{+}-\frac{b_{-}}{2}-m_{-}$ and representation
$\rho_{L}$, and write $F$ as in Equation \eqref{Fourier}. Assume that for every
$\gamma$ and $n$ we have $c_{\gamma,n}(y)=o(e^{\varepsilon y})$ as $y\to\infty$
for every $\varepsilon>0$. In the case where $m_{+}+m_{-}$ is even and the
constant $\Delta_{v}^{j}(p)$ for $j=\frac{m_{+}+m_{-}}{2}$ does not vanish, we
assume further that $c_{0,0}(y)=o(y^{T})$ as $y\to\infty$ for some $T$. Then
\[\lim_{w\to\infty}\int_{\mathbb{R}/\mathbb{Z}}w^{\frac{b_{+}}{2}+m_{+}-2-s}
\langle F(x+iw),\Theta_{L}(x+iw,v,p_{v})\rangle_{\rho_{L}}dx=0\] for large
enough $\Re s$. \label{boun0}
\end{lem}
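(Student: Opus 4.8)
The plan is to evaluate the $x$-integral exactly by pairing the Fourier expansions of $F$ and $\Theta_L$ in the real variable, and then to read off the $w\to\infty$ behaviour from the resulting exponential factors. Writing $\tau=x+iw$, each summand of $\theta_\gamma(\tau,v,p_v)$ carries the oscillating factor $\mathbf{e}\big(x\tfrac{\lambda^2}{2}\big)$ together with the real factor $e^{-2\pi w(\lambda_{v_+}^2-\lambda_{v_-}^2)/2}$, while the $\gamma$-component of $F$ contributes $c_{\gamma,n}(w)\mathbf{e}(nx)e^{-2\pi nw}$. The Hermitian pairing couples the $\gamma$-components, so integrating over $\mathbb{R}/\mathbb{Z}$ kills every cross term except those with $n=\tfrac{\lambda^2}{2}$ (both lie in $\tfrac{\gamma^2}{2}+\mathbb{Z}$, so their difference is an integer); absolute convergence, guaranteed by the decay below, justifies interchanging sum and integral. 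Combining the two real exponentials and using $\lambda^2=\lambda_{v_+}^2+\lambda_{v_-}^2$, the $w$-exponent collapses to $-2\pi w\lambda_{v_+}^2$, so the integral equals
\[\sum_{\lambda\in L^*}c_{\lambda+L,\lambda^2/2}(w)\,\overline{e^{-\Delta_v/8\pi w}(p_v)(\lambda)}\,e^{-2\pi w\lambda_{v_+}^2}.\]
It then remains to multiply by $w^{\frac{b_+}{2}+m_+-2-s}$ and show the limit is $0$.

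Next I would split the sum into the terms with $\lambda\neq0$ and the single term $\lambda=0$. For $\lambda\neq0$ the hypothesis $v\notin\lambda^\perp$ forces $\lambda_{v_+}\neq0$, hence $\lambda_{v_+}^2>0$. The quantitative input is that the majorant $\lambda_{v_+}^2-\lambda_{v_-}^2$ is a positive definite form, so only finitely many $\lambda\in L^*$ have bounded majorant; since the contributing $\lambda$ have $\lambda^2$ bounded below (the coefficients vanish for $n\ll0$), the identity $\lambda_{v_+}^2=\tfrac12\big(\lambda^2+(\lambda_{v_+}^2-\lambda_{v_-}^2)\big)$ shows that $\{\lambda\neq0:\lambda_{v_+}^2<\varepsilon\}$ is finite for each $\varepsilon>0$. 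Each of these finitely many vectors has $\lambda_{v_+}^2>0$, so there is a uniform $\varepsilon_0>0$ with $\lambda_{v_+}^2\ge\varepsilon_0$ for all contributing $\lambda\neq0$. Writing $e^{-2\pi w\lambda_{v_+}^2}\le e^{-\pi w\varepsilon_0}e^{-\pi w\lambda_{v_+}^2}$, bounding $|e^{-\Delta_v/8\pi w}(p_v)(\lambda)|$ by a fixed polynomial in $|\lambda|$ uniformly for $w\ge1$, and using the assumed sub-exponential growth of the coefficients, the residual sum $\sum_{\lambda}|c_{\lambda+L,\lambda^2/2}(w)|(1+|\lambda|)^{m_++m_-}e^{-\pi w\lambda_{v_+}^2}$ converges and grows at most sub-exponentially in $w$ (as $\lambda_{v_+}^2$ grows at least quadratically in $|\lambda|$ on the relevant set). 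Hence the whole $\lambda\neq0$ contribution is bounded by a sub-exponential function of $w$ times $e^{-\pi w\varepsilon_0}$, and multiplying by $w^{\frac{b_+}{2}+m_+-2-s}$ it tends to $0$ for every $s$.

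The remaining term $\lambda=0$ is where the delicacy lies, and it is the main obstacle: here $\lambda_{v_+}^2=0$, so the exponential provides no decay and everything hinges on $e^{-\Delta_v/8\pi w}(p_v)(0)$. Expanding $e^{-\Delta_v/8\pi w}=\sum_i\tfrac{(-1)^i}{(8\pi w)^i i!}\Delta_v^i$ and noting that $\Delta_v^i p_v$ is a sum of pieces bihomogeneous of degree $(m_+-2a,m_--2b)$ with $a+b=i$, only a piece of bidegree $(0,0)$ survives evaluation at $\lambda=0$; this forces $m_+$ and $m_-$ both even and $i=j:=\tfrac{m_++m_-}{2}$, the surviving value being the constant $\frac{(-1)^j\Delta_v^j p_v}{(8\pi w)^j j!}$. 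Thus if $m_++m_-$ is odd, or if the constant $\Delta_v^j p_v$ vanishes, the $\lambda=0$ term is identically zero and the limit is $0$ for all $s$, needing only the sub-exponential hypothesis. In the remaining case the $\lambda=0$ contribution to $w^{\frac{b_+}{2}+m_+-2-s}\int\langle F,\Theta_L\rangle\,dx$ is a constant multiple of $w^{\frac{b_+}{2}+m_+-2-j-s}c_{0,0}(w)$; invoking the assumed polynomial growth $c_{0,0}(w)=o(w^T)$, this is $o\big(w^{\frac{b_+}{2}+m_+-2-\frac{m_++m_-}{2}+T-\Re s}\big)$, which tends to $0$ precisely once $\Re s$ exceeds $\frac{b_+}{2}+m_+-2-\frac{m_++m_-}{2}+T$. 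This is exactly the meaning of ``for large enough $\Re s$'', and explains why the polynomial-growth assumption on $c_{0,0}$ is needed only in this case. Assembling the two contributions completes the argument.
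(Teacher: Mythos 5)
Your proof is correct and takes essentially the same route as the paper's: integrating over $x$ extracts the constant term $\sum_{\lambda\in L^{*}}e^{-\Delta_{v}/8\pi w}(\overline{p})(\lambda)\,c_{\lambda,\lambda^{2}/2}(w)\,e^{-2\pi w\lambda_{v_{+}}^{2}}$, the terms with $\lambda\neq0$ are killed because $v\notin\lambda^{\perp}$ forces $\lambda_{v_{+}}^{2}>0$, and the $\lambda=0$ term survives only through the constant $\Delta_{v}^{j}(p)$ with $j=\frac{m_{+}+m_{-}}{2}$, where the polynomial growth of $c_{0,0}$ and $\Re s\gg0$ finish the argument. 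Your additional details (the uniform lower bound $\varepsilon_{0}$ from the finiteness of $\{\lambda\neq0:\lambda_{v_{+}}^{2}<\varepsilon\}$ on the contributing set, and the bidegree bookkeeping showing both $m_{\pm}$ must be even for a nonzero value at $\lambda=0$) simply make explicit what the paper's terser proof leaves implicit.
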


\begin{proof}
For fixed $w$, the integral equals some power of $w$ times the constant term of
the Fourier expansion of $\langle F,\Theta_{L}\rangle$ at $y=w$. Hence we are
considering the limit of the expression
\begin{equation}
w^{\frac{b_{+}}{2}+m_{+}-2-s}\sum_{\lambda \in L^{*}}e^{-\Delta_{v}/8\pi
w}(\overline{p})(\lambda)c_{\lambda,\frac{\lambda^{2}}{2}}(w)e^{-2\pi
w\lambda_{v_{+}}^{2}} \label{FThetaw}
\end{equation}
as $w\to\infty$. As $v\not\in\lambda^{\perp}$ for any $\lambda \in L^{*}$, we
have $\lambda_{v_{+}}^{2}>0$ for any non-zero $\lambda \in L^{*}$, so that
$e^{-2\pi w\lambda_{v_{+}}^{2}}$ eliminates the sub-exponential growth of
$c_{\lambda,\frac{\lambda^{2}}{2}}(w)$. The contribution of the term with
$\lambda=0$ may be non-zero only if $m_{+}+m_{-}$ is even and the constant
$\Delta_{v}^{j}(p)$ with $j=\frac{m_{+}+m_{-}}{2}$ does not vanish. In this
case we consider some constant multiple of
$c_{0,0}(w)w^{\frac{b_{+}}{2}+m_{+}-2-j-s}$, which tends to 0 for $\Re s\gg0$
because of the polynomial growth of $c_{0,0}$. This proves the lemma.
\end{proof}

Using Lemmas \ref{intDeltak} and \ref{boun0} we generalize Lemma 4.4 of
\cite{[Bru]} as follows.

\begin{lem}
Let $L$, $v$, $p$, and $F$ be as in Lemma \ref{boun0}. Assume that the
regularized theta lift of $F$ from Theorem \ref{regthetalift} is well-defined.
If $e^{-\Delta_{v}/8\pi y}(q)(\lambda=0)$ vanishes for either $q=p$,
$q=\Delta_{v_{+}}p$, or $q=\lambda_{v_{-}}^{2}p$, then the theta lift of
$\Delta_{k}F(\tau)$, with the weight $k$ being
$\frac{b_{+}}{2}+m_{+}-\frac{b_{-}}{2}+m_{-}$, gives the same result as the
regularized integral \[\int_{X(1)}\langle
F(\tau),\Delta_{k}y^{\frac{b_{+}}{2}+m_{+}}\Theta_{L}(\tau,v,p_{v})\rangle_{
\rho_{L}}\frac{dxdy}{y^{2}}.\] \label{Lapmov}
\end{lem}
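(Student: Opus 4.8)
The plan is to deduce the identity from the formal self-adjointness of the weight $k$ Laplacian $\Delta_k$ under the regularized Petersson pairing, following Lemma 4.4 of \cite{[Bru]} but tracking carefully the different regularization used here. First I would absorb the automorphy factor of $\Theta_L$ into the second entry by setting $g:=y^{\frac{b_-}{2}+m_-}\Theta_L(\tau,v,p_v)$, which by Theorem \ref{modularity}$(iii)$, together with the fact that multiplying by $y^{\frac{b_-}{2}+m_-}$ lowers both weight indices accordingly, is a $\mathbb{C}[L^*/L]$-valued modular form of the same weight $k=\frac{b_+}{2}+m_+-\frac{b_-}{2}-m_-$ as $F$. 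Since $y$ is real, the integrand $y^{\frac{b_+}{2}+m_+}\langle\Delta_k F,\Theta_L\rangle_{\rho_L}$ defining the theta lift of $\Delta_k F$ equals $y^k\langle\Delta_k F,g\rangle_{\rho_L}$, so on each truncated domain $D_w$ I can apply Lemma \ref{intDeltak} with $f=F$ to move $\Delta_k$ off $F$ and onto $g$. A bookkeeping of $y$-powers via the relation $\Delta_{r,s}y^t=y^t\Delta_{r+t,s+t}$ then rewrites $y^k\langle F,\Delta_k g\rangle_{\rho_L}$ in the form $\langle F,\Delta_k y^{\frac{b_+}{2}+m_+}\Theta_L\rangle_{\rho_L}$ appearing on the right-hand side of the lemma (this identification of the balanced Laplacian with the stated operator is a step I would verify explicitly). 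What remains is to show that the two correction terms produced by Lemma \ref{intDeltak} — an $s$-proportional bulk integral and a line integral over $\mathbb{R}/\mathbb{Z}+iw$ — do not contribute after passing to the regularized value.

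For the line integral, which carries the factor $\langle Lf,g\rangle-\langle f,Lg\rangle$, I would invoke the explicit formulas \eqref{RkTheta} and \eqref{LTheta} for the action of $L$ on theta functions with polynomials. Applying $L$ to $g=y^{\frac{b_-}{2}+m_-}\Theta_L$ produces, by \eqref{LTheta}, theta functions attached to the polynomials $\lambda_{v_-}^2 p$ and $\Delta_{v_+}p$, while the term $\langle LF,g\rangle$ pairs $LF$ against the theta function still attached to $p$ itself. Consequently the constant ($\lambda=0$) Fourier coefficient of each of these pairings is governed by $e^{-\Delta_v/8\pi y}(q)(\lambda=0)$ for $q$ running through $p$, $\Delta_{v_+}p$, and $\lambda_{v_-}^2 p$ — precisely the three quantities assumed to vanish. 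Hence Lemma \ref{boun0} applies to each piece separately and the line integral over $\mathbb{R}/\mathbb{Z}+iw$ tends to $0$ as $w\to\infty$ for $\Re s$ large.

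Finally I would take $w\to\infty$ first, obtaining an identity between regularized integrals valid as an equality of holomorphic functions of $s$ in some right half-plane, and only then meromorphically continue in $s$ and extract the constant term at $s=0$. The surviving bulk correction is $s$ times the regularized integral of $y^{k-1}\big(\langle Lf,g\rangle-\langle f,Lg\rangle\big)$, so it can contribute to the constant term at $s=0$ only through a pole of that integral there; but the same vanishing of the $\lambda=0$ coefficients that killed the line integral shows this integrand is integrable near the cusp, whence its regularization is holomorphic at $s=0$ and the explicit $s$-factor annihilates the entire term. What is left is exactly the theta lift of $\Delta_k F$ on one side and the regularized integral $\int_{X(1)}\langle F,\Delta_k y^{\frac{b_+}{2}+m_+}\Theta_L\rangle_{\rho_L}\frac{dxdy}{y^2}$ on the other. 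I expect the main obstacle to lie in this last paragraph: justifying the interchange of the limit $w\to\infty$ with the meromorphic continuation in $s$, and establishing holomorphy at $s=0$. This is the precise point at which the present regularization departs from that of \cite{[Bru]}, and where the growth hypotheses of Lemma \ref{boun0} — sub-exponential growth of the $c_{\gamma,n}$, and polynomial growth of $c_{0,0}$ in the even case — must be invoked with care.
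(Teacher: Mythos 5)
Your proposal is correct and follows essentially the same route as the paper's proof: truncate to $D_{w}$, apply Lemma \ref{intDeltak} to move $\Delta_{k}$ onto the theta kernel, kill the line integral at $y=w$ via Equation \eqref{LTheta} and Lemma \ref{boun0}, and then observe that in the $s$-multiplied bulk integral every $\lambda\neq0$ term is integrable for all $s$ (so the factor $s$ eliminates it), while the $\lambda=0$ term contributes expressions $s\Delta_{v}^{j}(\overline{q})\int_{1}^{\infty}y^{\alpha-j-s}c_{0,0}(y)\,dy$ that vanish precisely because of the assumed vanishing of $e^{-\Delta_{v}/8\pi y}(q)(\lambda=0)$ for the three polynomials $q$. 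The only (harmless) deviation is that you invoke the vanishing hypothesis already for the line integral, whereas the paper disposes of that term using only the growth conditions of Lemma \ref{boun0} together with the remark that $L$ preserves them, reserving the vanishing hypothesis for the bulk correction exactly as in your final paragraph.
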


\begin{proof}
Fix $w>1$ and $s\in\mathbb{C}$. By Lemma \ref{intDeltak}, the difference between
the corresponding integrals on $D_{w}$ is the sum of a line integral at $y=w$
and a certain integral over $D_{w}$ multiplied by $s$. The weight lowering
operator $L$ preserves the properties of $F$ needed for Lemma \ref{boun0}, and
by Equation \eqref{LTheta} the image of
$y^{\frac{b_{-}}{2}+m_{-}}\Theta_{L}(\tau,v,p_{v})$ under $L$ is the sum of two
theta functions with polynomials. Thus, for large enough $\Re s$, Lemma
\ref{boun0} implies that the line integral vanishes as $w\to\infty$. For the
other integral, we may change the integral by a finite number without affecting
the constant term in the Laurent expansion at $s=0$ because of the factor $s$.
We thus replace the domain $D_{w}$ by the rectangle $x\in\mathbb{R}/\mathbb{Z}$
and $1 \leq y \leq w$. Equation \eqref{LTheta} reduces the expression we
consider to a linear combination of integrals of the form
\[s\int_{1}^{w}\int_{\mathbb{R}/\mathbb{Z}}y^{\alpha-s}\langle
G(\tau),\Theta_{L}(\tau,v,q_{v})\rangle dxdy\] for $(G,q)$ being $(LF,p)$,
$(F,\Delta_{v_{+}}p)$, and $(F,\lambda_{v_{+}}^{2}p)$, each with the
corresponding power $\alpha$. Expanding $\langle G,\Theta_{L}\rangle$ and
integrating over $x$ we obtain $s$ times the integral of a function of the form
appearing in Equation \eqref{FThetaw}. Our assumption on $v$ implies that the
integral over $[1,\infty)$ of every term with $\lambda\neq0$ is finite for every
$s$, so that the factor $s$ eliminates it at $s=0$. The difference between the
theta lift and the integral asserted in the lemma is thus the constant term at
$s=0$ of a linear combination of expressions of the form
\begin{equation}
s\Delta_{v}^{j}(\overline{q})\int_{1}^{\infty}y^{\alpha-j-s}c_{0,0}(y)dy
\label{intdif0}
\end{equation}
for the appropriate $j$. Since we assume that the coefficient
$\Delta_{v}^{j}(\overline{q})$ in Equation \eqref{intdif0} vanishes for the three possible polynomials $q$, the lemma follows.
\end{proof}

In general the expression in Equation \eqref{intdif0} does not vanish. For
example, if $F$ is weakly holomorphic then for large enough $\Re s$ Equation
\eqref{intdif0} becomes a linear combination of expressions of the form
$\frac{s}{\beta-s}$. Such a combination may have a non-zero constant term at
$s=0$ in case a term with $\beta=0$ appears.

The conditions required by Lemma \ref{boun0} and Lemma \ref{Lapmov} are
satisfied by a large variety of modular forms: Almost weakly holomorphic modular
forms, various eigenforms of $\Delta_{k}$, and the functions from Definition 1.8
in Section 1.3 of \cite{[Bru]} to name a few. These lemmas are thus applicable
in many settings.

\smallskip

Returning to the theta lift $\Phi_{L,m,m,0}(v,F)$ for
$F=\delta_{1-\frac{b_{-}}{2}-m}^{m}f$ and $f$ weakly holomorphic, we obtain

\begin{cor}
For $m>0$, or for $m=0$ (with $F=f$) under the condition $c_{0,0}=0$, the action
of the operator $\Delta^{G}_{m}$ multiplies $\Phi_{L,m,m,0}(v,F)$ by $2mb_{-}$.
\label{DeltaGmPhi}
\end{cor}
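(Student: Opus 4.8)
The plan is to compute $\Delta^G_m\Phi_{L,m,m,0}(v,F)$ directly, by pulling the weight $m$ Laplacian (which acts in the Grassmannian variable $v$) inside the regularized integral defining the lift, converting it into a $\tau$-Laplacian on the theta kernel by means of Proposition \ref{DeltatauZ}, and then transporting that $\tau$-Laplacian onto $F$ via self-adjointness, where it is evaluated through the eigenvalue of $F$ recorded after Equation \eqref{coeff}.

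Concretely, I would first move $\Delta^G_m=\Delta^G_{m,0}$ under the integral sign, so that it acts on the kernel $\Theta_{L,m,m,0}(\tau,v)$, which enters the Hermitian pairing $\langle F,\Theta_{L,m,m,0}\rangle$ conjugated and hence carries $v$-weight $(m,0)$. Since $\Theta_{L,m,m,0}$ itself has $v$-weight $(0,m)$, Proposition \ref{DeltatauZ} with $(r,s,t)=(m,m,0)$ applies to $\Delta^G_{0,m}$ on $y^{b_-/2}\Theta_{L,m,m,0}$, rewriting it as $4\Delta_{k,\tau}$, where $k$ is the weight of $F$, together with the explicit constant $2mb_-$ from that proposition. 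Matching the operator $\Delta^G_{m,0}$ acting on the conjugated kernel to $\Delta^G_{0,m}$ acting on $\Theta_{L,m,m,0}$ produces a further additive constant coming from the passage between $\widetilde\Delta^G$ and $\Delta^G$ (Lemma \ref{DeltaGkl} and the definition of $\Delta^G_{k,l}$); I would keep careful track of these constants, whose net effect will combine with the eigenvalue factor below.

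Next I would transfer $\Delta_{k,\tau}$ off the kernel and onto $F$. Taking $g=y^{b_-/2}\Theta_{L,m,m,0}$, a weight $k$ form in $\tau$, the self-adjointness of $\Delta_k$ (Lemma \ref{intDeltak}) together with the vanishing of the limiting line integral (Lemma \ref{boun0}) and of the $s$-multiplied residual term gives $\int_{X(1)}y^{k}\langle F,\Delta_k g\rangle\,d\mu=\int_{X(1)}y^{k}\langle\Delta_kF,g\rangle\,d\mu$; this is exactly the situation of Lemma \ref{Lapmov}, whose hypothesis is that $e^{-\Delta_v/8\pi y}(q)(\lambda=0)=0$ for $q\in\{p,\Delta_{v_+}p,\lambda_{v_-}^2p\}$ with $p=P_{m,m,0}$. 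I must verify these: because $\Delta_vP_{m,m,0}=0$ (the coefficient $4rt$ in $\Delta_vP_{r,s,t}=4rtP_{r-1,s-1,t-1}$ vanishes when $t=0$), one has $e^{-\Delta_v/8\pi y}P_{m,m,0}=P_{m,m,0}$, and $P_{m,m,0}$, $\Delta_{v_+}P_{m,m,0}=0$, and $\lambda_{v_-}^2P_{m,m,0}$ all vanish at $\lambda=0$ once $m>0$; for $m=0$ the only surviving contribution is the constant controlled by $c_{0,0}$, which the hypothesis $c_{0,0}=0$ annihilates. Finally, Equation \eqref{coeff} gives $\Delta_kF=\tfrac{mb_-}{2}F$ (eigenvalue $-\tfrac{mb_-}{2}$ in the convention there), so the transported term contributes $4\cdot\tfrac{mb_-}{2}$; once the additive constants from Proposition \ref{DeltatauZ} and from the $\widetilde\Delta^G$-to-$\Delta^G$ passage are combined they leave precisely this eigenvalue contribution, yielding $\Delta^G_m\Phi_{L,m,m,0}(v,F)=2mb_-\,\Phi_{L,m,m,0}(v,F)$.

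The main obstacle is the analytic justification in the middle step rather than the final arithmetic: one must legitimately interchange $\Delta^G_m$ with the regularized integral and, more delicately (since our regularization differs from that of \cite{[Bru]}), show that the boundary line integral and the $s$-multiplied residual term genuinely vanish. These vanish only by virtue of the special features of $P_{m,m,0}$, namely that it is annihilated by $\Delta_v$ and, for $m>0$, that it and the relevant auxiliary polynomials vanish at $\lambda=0$; this is the real content of Lemmas \ref{boun0} and \ref{Lapmov}, and the sign and constant bookkeeping in Proposition \ref{DeltatauZ} and in the conjugation step must be carried out consistently in order to land on the coefficient $2mb_-$.
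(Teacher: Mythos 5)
Your proposal is correct and is essentially the paper's own proof: the paper likewise conjugates $\Delta^{G}_{m}=\widetilde{\Delta}^{G}_{m,0}$ into $\widetilde{\Delta}^{G}_{0,m}$ acting on $\Theta_{L,m,m,0}$, identifies the resulting operator via Proposition \ref{DeltatauZ} with $(r,s,t)=(m,m,0)$ as $4\Delta_{k,\tau}$, transfers the $\tau$-Laplacian onto $F$ by Lemma \ref{Lapmov} (whose hypotheses hold exactly as you verify, since $P_{m,m,0}$ is $\Delta_{v}$-harmonic and the relevant polynomials vanish at $\lambda=0$ for $m>0$, the case $m=0$ being covered by $c_{0,0}=0$), and concludes from $\Delta_{k}F=\frac{mb_{-}}{2}F$ that the eigenvalue is $4\cdot\frac{mb_{-}}{2}=2mb_{-}$. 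Your explicit attention to the boundary line integral and the $s$-multiplied residual term is precisely the content of Lemmas \ref{intDeltak} and \ref{boun0} that the paper invokes, so no genuinely different route is taken.
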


\begin{proof}
The operator $\Delta^{G}_{m}=\widetilde{\Delta}^{G}_{m,0}$ acts on the
conjugated theta function $\overline{\Theta_{L,m,m,0}}$ which shows up in the
definition of $\Phi_{L,m,m,0}$. Hence its action is the same as the action of
the conjugated operator $\widetilde{\Delta}^{G}_{0,m}=\Delta_{0,m}^{G}+2mb_{-}$
on the theta function $\Theta_{L,m,m,0}$ itself (after conjugating). But with
$r=s=m$ and $t=0$, the combination $\Delta_{0,m}^{G}+2mb_{-}$ coincides with the
operator appearing on the right hand side of Proposition \ref{DeltatauZ}. It
follows that $\Delta^{G}_{m}\Phi_{L,m,m,0}(v,F)$ is 4 times the integral from
Lemma \ref{Lapmov}. Since for $m>0$ the polynomial $e^{-\Delta_{v}/8\pi
y}(\overline{P}_{m,m,0})$ vanishes at $\lambda=0$ and for $m=0$ we assume
$c_{0,0}=0$, the expression in Equation \eqref{intdif0} vanishes in both cases.
Hence Lemma \ref{Lapmov} implies that $\Delta^{G}_{m}\Phi_{L,m,m,0}(v,F)$
coincides with $4\Phi_{L,m,m,0}(v,\Delta_{k}F)$. As $\Delta_{k}$ multiplies $F$
by $\frac{mb_{-}}{2}$, the corollary follows.
\end{proof}

\subsection{The Theta Lift of $F=\delta_{1-\frac{b_{-}}{2}-m}^{m}f$}

We begin by evaluating $p_{v,h_{+},h_{-}}$ for
$p_{v}=\frac{(-i)^{m}}{2}P_{m,m,0}$. Only elements with $h_{-}=0$ appear (since
$m_{-}=0$), and as $z_{v_{+}}=\frac{X_{v,V}}{Y^{2}}$ the binomial decomposition
\[\frac{(-i)^{m}}{2}\cdot\frac{(\lambda,X_{v,V}+iY_{v,V})^{m}}{(Y^{2})^{m}}
=\sum_{h_{+}}\binom{m}{h_{+}}\bigg(\frac{(\lambda,X_{v,V})}{Y^{2}}\bigg)^{h_{+}}
\frac{i^{m-h_{+}}}{2i^{m}}\cdot\frac{(\lambda,Y_{v,V})^{m-h_{+}}}{(Y^{2})^{m-h_{
+}}}\] implies
\begin{equation}
p_{v,h_{+},0}(\eta)=\frac{(-i)^{h_{+}}}{2}\binom{m}{h_{+}}\frac{(\eta,Y)^{m-h_{+
}}}{(Y^{2})^{m-h_{+}}}. \label{pvh+}
\end{equation}
This polynomial is homogenous of degree $(m-h_{+},0)$ with respect to the
element $w \in G(K_{\mathbb{R}})$ in which $w_{+}$ is spanned by $Y$ (or by the
normalized generator $\frac{Y}{|Y|}$). This illustrates the reason for using the
notation $p_{v,h_{+},0}$ rather than $p_{w,h_{+},0}$: The latter notation allows dependence only on $w$, i.e., on $\frac{Y}{|Y|}$, while Equation \eqref{pvh+} displays dependence on $Y$ itself (hence on $v$). On the other hand, we can write $p_{v,0,0}(\eta)$ as $\frac{1}{2|Y|^{m}}(\eta,w_{1})^{m}$ with
$w_{1}=\frac{Y}{|Y|}$ (as in Theorem 14.3 of \cite{[B1]}), so that the term with
$\Phi_{K}$ in Theorem 7.1 of \cite{[B1]} can be evaluated using the polynomial
$\widetilde{p}(\eta)=(\eta,w_{1})^{m}$. Since for this polynomial Theorem 10.3
of \cite{[B1]} remains valid, we deduce that $\Phi_{K}(v,F,p_{v,0,0})$ is a
polynomial in $\frac{Y}{|Y|}$ divided by $|Y|^{m}$.

\medskip

We can now state the properties of the theta lift $\Phi_{L,m,m,0}(v,F)$.

\begin{thm}
For $F=\delta_{1-\frac{b_{-}}{2}-m}^{m}f$ and $f$ a weakly holomorphic modular
form of weight $1-\frac{b_{-}}{2}-m$, the theta lift
$\frac{i^{m}}{2}\Phi_{L,m,m,0}(v,F)$ is a function of $Z \in K_{\mathbb{R}}+iC$
whose singularity along $\lambda^{\perp}$ for negative norm $\lambda$ is given
by \[\frac{1}{2}\sum_{\alpha\lambda \in
L^{*}}c_{\alpha\lambda,\frac{\alpha^{2}\lambda^{2}}{2}}\frac{(i\alpha)^{m}}{
(2\pi)^{m}}\Bigg[\prod_{r=0}^{m-1}\bigg(r+\frac{b_{-}}{2}\bigg)\cdot\frac{
(\lambda,\overline{Z_{v,V}})^{m}}{2^{m}(Y^{2})^{m}}\cdot\bigg(-\ln\frac{
|(\lambda,Z_{v,V})|^{2}}{Y^{2}}\bigg)+\]
\[+\sum_{k=0}^{m-1}\frac{m!}{2^{k}k!}\bigg(\frac{\lambda^{2}}{2}\bigg)^{m-k}
\cdot\prod_{r=0}^{k-1}\bigg(r+\frac{b_{-}}{2}\bigg)\cdot\frac{(\lambda,\overline
{Z_{v,V}})^{k}}{(\lambda,Z_{v,V})^{m-k}(Y^{2})^{k}}\cdot\frac{1}{m-k}\Bigg].\]
This function is annihilated by $\Delta^{G}_{m}-2mb_{-}$ outside its
singularities. Its Fourier expansion at the primitive norm 0 vector $z$ of $L$
(if it exists) decomposes, in a Weyl chamber $W$ containing $z$ in its closure,
as \[\frac{\sqrt{2}\varphi(Y)}{\pi^{m-1}|Y|^{m-1}}+\sum_{k=0}^{m}\sum_{C=0}^{k}
\sum_{\rho \in
K^{*}}\frac{A_{k,C,\rho}}{\pi^{k+C}}\frac{(\rho,Y)^{k-C}}{(Y^{2})^{k}}
\times\left\{\begin{array}{cc}\mathbf{e}\big((\rho,Z)\big) & (\rho,W)>0 \\
\mathbf{e}\big((\rho,\overline{Z})\big) & (\rho,W)<0.\end{array}\right.\] Here
$\varphi(Y)$ is a polynomial of degree not exceeding $m+1$ in $\frac{Y}{|Y|}$,
plus some constant divided by $|Y|^{m+1}$. The constants $A_{k,C,\rho}$ involve
rational numbers, roots of unity of some finite order, and the Fourier
coefficients of $f$. \label{PhiLmm0}
\end{thm}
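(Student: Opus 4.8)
The plan is to prove the three assertions---the explicit singularity, the eigenvalue equation $\big(\Delta^G_m-2mb_-\big)\frac{i^m}{2}\Phi_{L,m,m,0}(v,F)=0$, and the Fourier expansion---largely independently, each drawing on machinery already in place. The differential equation is immediate: by Equation \eqref{coeff} the form $F=\delta_{1-\frac{b_-}{2}-m}^m f$ is an eigenfunction on which $\Delta_k$ acts by the scalar $\frac{mb_-}{2}$, and since for $m>0$ the polynomial $e^{-\Delta_v/8\pi y}(\overline{P}_{m,m,0})$ vanishes at $\lambda=0$, Corollary \ref{DeltaGmPhi} applies verbatim (the case $m=0$ under its stated hypothesis $c_{0,0}=0$) and gives $\Delta^G_m\cdot\frac{i^m}{2}\Phi_{L,m,m,0}(v,F)=2mb_-\cdot\frac{i^m}{2}\Phi_{L,m,m,0}(v,F)$ on the complement of the walls, where the regularized lift is smooth by Lemma \ref{Lapmov}.

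For the singularity along $\lambda^\perp$ with $\lambda^2<0$ I would specialize the general formula \eqref{sing} to $\Phi_{L,m,m,0}=\Phi_L(v,F,P_{m,m,0})$ and multiply by $\frac{i^m}{2}$. The decisive simplification is that the conjugate polynomial appearing in \eqref{sing} is $\overline{P_{m,m,0}}=P_{0,m,m}$, and $\Delta_v P_{0,m,m}=4\cdot0\cdot m\,P_{-1,m-1,m-1}=0$, so only the $j=0$ term of the inner sum survives. Inserting the coefficients $c_{\alpha\lambda,\alpha^2\lambda^2/2,k}$ from \eqref{coeff}, writing $\overline{P_{m,m,0}}(\alpha\lambda)=\alpha^m(\lambda,\overline{Z_{v,V}})^m/(Y^2)^m$ and using $\lambda_{v_+}^2=|(\lambda,Z_{v,V})|^2/Y^2$, the exponent $\beta=\frac{b_+}{2}+m_+-j-k-1$ becomes $m-k$. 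The index $k=m$ gives $\beta=0\in-\mathbb{N}$, hence the logarithmic term in $\ln\!\big(|(\lambda,Z_{v,V})|^2/Y^2\big)$, while $k<m$ gives $\beta=m-k>0$ with $\Gamma(m-k)=(m-k-1)!$, hence the polar terms. Matching the powers of $\alpha$, $(\lambda,Z_{v,V})$, $(\lambda,\overline{Z_{v,V}})$, $\lambda^2$ and $Y^2$ is automatic, and the numerical constants agree after the identity $\binom{m}{k}(m-k-1)!=m!/\big(k!(m-k)\big)$ together with $(4\pi)^{-k}(2\pi)^{-(m-k)}=2^{-(m+k)}\pi^{-m}$; this reproduces the displayed formula, the only bookkeeping points being the paper's sign convention in the logarithm and the discarding of the smooth constant $\ln\alpha^2$.

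The Fourier expansion is the substantial step. I would feed the decomposition $p_v=\sum_{h_+}(\lambda,z_{v_+})^{h_+}p_{v,h_+,0}$, with the $p_{v,h_+,0}$ of Equation \eqref{pvh+}, into the reduction formula of Theorem 7.1 of \cite{[B1]} (valid in our notation with $p_{w,h_+,h_-}$ replaced by $p_{v,h_+,h_-}$, as checked in the discussion preceding Lemma \ref{boun0}). The $\rho=0$ part produces the lower-rank lift $\Phi_K$ on $G(K_{\mathbb R})$; for the component $h_+=0$, where $p_{v,0,0}(\eta)=\frac{1}{2|Y|^m}(\eta,w_1)$ with $w_1=Y/|Y|$, Theorem 10.3 of \cite{[B1]} genuinely applies and yields a polynomial in $Y/|Y|$ divided by $|Y|^m$; this, together with the $h_+>0$ contributions and the regularization constant noted after Lemma \ref{Lapmov}, assembles into the prefactor $\frac{\sqrt2\,\varphi(Y)}{\pi^{m-1}|Y|^{m-1}}$ with $\deg\varphi\le m+1$ (plus a constant over $|Y|^{m+1}$). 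The terms with $0\neq\rho\in K^*$ arise from the non-constant Fourier modes; evaluating the associated Whittaker/Bessel integrals of Section 7 of \cite{[B1]} against the almost-holomorphic profile of $F$ collapses---because of the factor $\mathbf{e}\big((\rho,Z)\big)$ or $\mathbf{e}\big((\rho,\overline Z)\big)$---to the finite elementary sums $A_{k,C,\rho}\,\pi^{-(k+C)}(\rho,Y)^{k-C}/(Y^2)^k$, with the sign of $(\rho,W)$ selecting the half-plane on which the exponential decays inside the Weyl chamber $W$. Finally, the $A_{k,C,\rho}$ are built from the Fourier coefficients $c_{\gamma,n}$ of $f$, the structure constants of $\rho_L$ (Gauss sums, hence roots of unity), and half-integer values of $\Gamma$ (powers of $\sqrt\pi$, absorbed into the displayed $\pi^{k+C}$), so they lie in a cyclotomic extension of the field generated by the $c_{\gamma,n}$.

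The hard part will be this last step. Borcherds' reduction and his Theorem 10.3 were designed for $p=1$ and for the single polynomial $(\eta,v_1)^{m_+}$, whereas here the $Y$-dependence of $p_{v,h_+,0}$ for $h_+>0$ destroys the ``polynomial in $v_1$'' structure those results exploit, exactly as flagged in the remarks following Equation \eqref{wallcross}. I therefore expect to process each $h_+$-component separately, invoking only smoothness off the walls (not polynomiality) for $h_+>0$, and to carry the non-holomorphic $1/y^k$ parts of $F=\delta^m f$ through the unfolding so that the precise shape $(\rho,Y)^{k-C}/(Y^2)^k$ and the exact powers of $\pi$ emerge. Pinning the constants down inside a cyclotomic field, rather than merely in $\mathbb C$, is the final delicate point.
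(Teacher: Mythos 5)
Your route coincides with the paper's own proof: the eigenvalue assertion via Corollary \ref{DeltaGmPhi}, the singularity by specializing Equation \eqref{sing} with $j=0$ (harmonicity of $\overline{P_{m,m,0}}=P_{0,m,m}$) together with the coefficients \eqref{coeff}, and the Fourier expansion via the corrected form of Theorem 7.1 of \cite{[B1]} with the components $p_{v,h_{+},0}$ of Equation \eqref{pvh+}. Your first two parts are correct, including the identifications $\beta=m-k$, $\binom{m}{k}(m-k-1)!=m!/\big(k!(m-k)\big)$, and the power count $(4\pi)^{-k}(2\pi)^{-(m-k)}=2^{-(m+k)}\pi^{-m}$.

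The genuine gap sits in the expansion, exactly where you defer to ``carrying the $1/y^{k}$ parts through the unfolding.'' Substituting Equation \eqref{Lapjpvh+} and the finite expansion of $K_{\nu+\frac{1}{2}}$ produces, for each nonzero $\eta \in K^{*}$, a double sum over the Laplacian index $j$ and the Bessel index $r$, in which $j$ and $r$ enter the exponents only through $C=j+r$, with the factor $|(\eta,Y)|^{k-j-r}$. Nothing in the unfolding restricts $C \leq k$: in the branch $\nu=m-h_{+}-j-k-1$ one can have $C$ as large as $m-h_{+}-k-1$, and any surviving term with $C>k$ would carry a negative power of $(\eta,Y)$, i.e.\ would both fall outside the asserted range $0 \leq C \leq k$ and create singularities along the walls $\eta^{\perp}$. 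The theorem's displayed shape is thus equivalent to a cancellation, which the paper proves: for fixed $C \geq k+1$ one writes $r=C-j$, the $j$-dependent part becomes $\frac{(C-k-1)!}{C!}\sum_{j}(-1)^{j}\binom{C}{j}\binom{m-h_{+}+C-k-1-2j}{C-k-1}$, and since the rightmost binomial coefficient is a polynomial of degree $C-k-1<C$ in $j$, Lemma \ref{binompol} kills it (the branch $\nu=j+k+h_{+}-m$ gives $C \leq k$ directly from $2j \leq m-h_{+}$). Your proposal contains no mechanism for this, and ``smoothness off the walls'' is no substitute, because the claim concerns the exact exponents, not mere regularity; note the same identity reappears in the proof of Theorem \ref{b-=1mer}, so it is structural, not incidental.

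Two attributions should also be corrected. The constant divided by $|Y|^{m+1}$ inside $\varphi(Y)$ is not ``the regularization constant noted after Lemma \ref{Lapmov}'': that constant is the expression \eqref{intdif0}, which vanishes here for $m>0$ precisely because $e^{-\Delta_{v}/8\pi y}(\overline{P}_{m,m,0})$ vanishes at $\lambda=0$ --- the very fact you invoked in your first paragraph, so citing it again as a source of a constant is inconsistent. It actually comes from the $\eta=0$ term of the reduction, where only $h_{+}=m-2j$ contributes, Equation \eqref{coeff} forces $k=m$, and Lemma 7.3 of \cite{[B1]} yields Hurwitz zeta values $\zeta\big(m+1,\frac{\varepsilon}{N}\big)$ at the constant term $s=0$, giving a constant times $(Y^{2})^{-m}$. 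Likewise, the cyclotomic ingredient in $A_{k,C,\rho}$ is the factor $\mathbf{e}\big(n(\gamma,\zeta)\big)$ of Equation \eqref{AkCrho}, with order bounded by the denominators of $(\zeta,\gamma)$, rather than Gauss sums from $\rho_{L}$; this slip is harmless for the conclusion but misidentifies the mechanism.
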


\begin{proof}
The fact that $\Delta^{G}_{m}\Phi_{L,m,m,0}=2mb_{-}\Phi_{L,m,m,0}$ is the
content of Corollary \ref{DeltaGmPhi}. The singularities can be read off
Theorem \ref{sing}, with $b_{+}=2$, $m_{+}=m$, $k \leq m$, and $j=0$ since
$p_{v}$ is harmonic. The condition $\beta\in-\mathbb{N}$ holds only for $k=m$,
and the term with $\ln(\alpha^{2})$ multiplies a smooth function. Substituting
the coefficients $c_{\gamma,n,k}$ from Equation \eqref{coeff} thus yields the
asserted singularity near $\lambda^{\perp}$. The rest of the proof follows the
proof of Theorem 14.3 of \cite{[B1]}. We assume that $L$ contains a primitive
norm 0 vector (otherwise the assertion about Fourier expansions is vacuous), and
that $v$ (or $w$) is not on any wall between two Weyl chambers. Since the
polynomial $p_{v}$ is of homogeneity degree $(m,0)$, we can take $h_{-}=h=0$ in
Theorem 7.1 of \cite{[B1]}. Thus, $\frac{i^{m}}{2}\Phi_{L,m,m,0}(v,F)$ is the
sum of $\frac{|Y|}{\sqrt{2}}\Phi_{K}(w,F_{K},p_{v,0,0})$, a term coming from the
element $0 \in K^{*}$, and the expression
\[\sqrt{2}|Y|\sum_{h_{+},j,k,n}\frac{n^{h_{+}}}{(2i)^{h_{+}}j!(-8\pi)^{j}}\sum_{
0\neq\eta \in
K^{*}}\Delta_{w}^{j}(\overline{p}_{v,h_{+},0})(\eta)\sum_{\gamma}\mathbf{e}
\big(n[(\eta,X)+(\gamma,\zeta)]\big)\times\] \[\times
c_{\gamma,\frac{\eta^{2}}{2},k}\cdot2\bigg(\frac{nY^{2}}{2|(\eta,Y)|}\bigg)^{
m-h_{+}-j-k-\frac{1}{2}}K_{m-h_{+}-j-k-\frac{1}{2}}\big(2\pi n|(\eta,Y)|\big).\]
Here $\gamma$ is the $L^{*}/L$-image of an element of $L^{*}$ whose restriction
to $z^{\perp} \subseteq L$ is the pull-back of $\eta:K\to\mathbb{Z}$ under the
projection $z^{\perp} \to K$. The derivation employs Lemma 7.2 of \cite{[B1]},
together with the fact that $b_{+}=2$, $z_{v_{+}}^{2}=\frac{1}{Y^{2}}$, $\mu=X$,
and $\eta_{w_{+}}^{2}=\frac{(\eta,Y)^{2}}{Y^{2}}$.

Next, applying $\Delta_{w_{+}}$ (which differentiates twice with respect to the
pairing with $\frac{Y}{|Y|}$) to the complex conjugate of $p_{v,h_{+},0}$ from
Equation \eqref{pvh+} gives
\begin{equation}
\Delta_{w}^{j}(\overline{p_{v,h_{+},0}})(\eta)=\frac{i^{h_{+}}}{2}\cdot\frac{m!}
{h_{+}!(m-h_{+}-2j)!}\cdot\frac{(\eta,Y)^{m-h_{+}-2j}}{(Y^{2})^{m-h_{+}-j}}.
\label{Lapjpvh+}
\end{equation}
Furthermore, we quote from the proof of Theorem 14.3 of \cite{[B1]} the formula
\[K_{\nu+\frac{1}{2}}(t)=\sqrt\frac{\pi}{2t}e^{-t}\sum_{r=0}^{\nu}\frac{(\nu+r)!
}{r!(\nu-r)!}\frac{1}{(2t)^{r}}\] for the $K$-Bessel function of half-integral
index (here $\nu\in\mathbb{N}$, including 0). Since inverting the index leaves
the $K$-Bessel function invariant, the same expansion holds for
$K_{-\nu-\frac{1}{2}}(t)$. Substituting, and collecting the total powers of 2,
$\pi$, $n$, $(\eta,Y)$, $|(\eta,Y)|$, and $Y^{2}$ we find that this expression
reduces to
\[\sum_{h_{+},j,k,n}\sum_{\eta,\gamma,r}\frac{m!(\nu+r)!c_{\gamma,\frac{\eta^{2}
}{2},k}(-1)^{j}}{h_{+}!(m-h_{+}-2j)!j!r!(\nu-r)!}2^{k-m-2j-2r}\pi^{-j-r}n^{
m-k-j-r-1}\times\]
\begin{equation}
\times|(\eta,Y)|^{k-j-r}sgn(\eta,Y)^{m-h_{+}}(Y^{2})^{-k}\mathbf{e}\big(n[
(\eta,X)+i|(\eta,Y)|+(\gamma,\zeta)]\big). \label{FourexpPhi}
\end{equation}
Here $\nu$ stands for $m-h_{+}-j-k-1$ when this integer is non-negative and for
$j+k+h_{+}-m$ otherwise. We observe that $j$ and $r$ appear in exponents only
through their sum $C=j+r$, and we wish to show that only terms with $C \leq k$
appear. Considering the terms in which $\nu=j+k+h_{+}-m$, we recall that $2j$ is
bounded by $m-h_{+}$. Hence we have the inequality $j+h_{+}-m\leq-j$, which
implies $r\leq\nu \leq k-j$ hence $C=j+r \leq k$. For the other terms we fix $C$
and write $r=C-j$, so that the only part depending on $j$ is
\[\frac{1}{C!}\sum_{j}(-1)^{j}\binom{C}{j}\frac{(m-h_{+}+C-k-1-2j)!}{(m-h_{+}
-2j)!}.\] For $C \geq k+1$ this sum equals
\[\frac{(C-k-1)!}{C!}\sum_{j}(-1)^{j}\binom{C}{j}\binom{m-h_{+}+C-k-1-2j}{C-k-1}
,\] hence vanishes by Lemma \ref{binompol} since the rightmost binomial
coefficient is a polynomial of degree $C-k-1<C$ in $j$. Thus, only terms with $C
\leq k$ survive, which implies that $|(\eta,Y)|^{k-C}$ is continuous for every
$\eta$.

Now, by Equation \eqref{coeff}, $c_{\gamma,\frac{\eta^{2}}{2},k}$ is some
rational multiple of
$c_{\gamma,\frac{\eta^{2}}{2}}\frac{(\eta^{2})^{m-k}}{\pi^{k}}$. Thus, given
$k$, $C$, $n$, $\eta$, and $\gamma$ (with $C \leq k$ and the usual relation
between $\gamma$ and $\eta$), the corresponding terms in Equation
\eqref{FourexpPhi} take the form
\[a_{k,C,\varepsilon}\frac{n^{m-k-C-1}(\eta^{2})^{m-k}}{\pi^{k+C}}c_{\gamma,
\frac{\eta^{2}}{2}}\frac{(\eta,Y)^{k-C}}{(Y^{2})^{k}}\mathbf{e}\big(n[(\eta,
X)+i|(\eta,Y)|+(\gamma,\zeta)]\big),\] where $\varepsilon=sgn(\eta,Y)$ and the
coefficients $a_{k,C,\varepsilon}$ are rational numbers. \cite{[B1]} shows that
$a_{0,0,\varepsilon}$ equals $2^{m}$ for $\varepsilon=+1$ and 0 for
$\varepsilon=-1$. Now, the power of $n$ lies in $\mathbb{Q}$, and raising
$\mathbf{e}((\gamma,\zeta))$ to a power which equals the level of $L$ gives 1.
The remaining part of the exponent is $\mathbf{e}\big(n(\eta,Z)\big)$ if
$(\eta,Y)>0$ and $\mathbf{e}\big(n(\eta,\overline{Z})\big)$ if $(\eta,Y)<0$. We
may substitute $n\eta=\rho$, and the total coefficient of
$\frac{(\rho,Y)^{k-C}}{\pi^{k+C}(Y^{2})^{k}}$ times
$\mathbf{e}\big((\rho,Z)\big)$ or $\mathbf{e}\big((\rho,\overline{Z})\big)$
becomes
\begin{equation}
A_{k,C,\rho}=a_{k,C,\varepsilon}(\rho^{2})^{m-k}\sum_{n>0,\frac{\rho}{n} \in
L^{*}}\sum_{\gamma|_{z^{\perp}}=\frac{\rho}{n}}\frac{c_{\gamma,\frac{\rho^{2}}{
2n^{2}}}}{n^{m+1}}\mathbf{e}\big(n(\gamma,\zeta)\big). \label{AkCrho}
\end{equation}
This coefficient clearly has the asserted properties. As the dependence on
$X=\mu$ comes only from $\mathbf{e}\big((\eta,\mu)\big)$, the terms with
$\Phi_{K}$ or with $\eta=0$ all go into $\varphi(Y)$.

Now, the term involving $\Phi_{K}$ was discussed above (note the factor
$\frac{1}{|Y|^{m}}$ from $p_{v,0,0}$ but also the coefficient $\sqrt{2}|Y|$
from Theorem 7.1 of \cite{[B1]}). The reason for the coefficient
$\frac{\sqrt{2}}{\pi^{m-1}}$ is explained below. The polynomial from Theorem
10.3 of \cite{[B1]} has degree at most $m+1$ since $m_{+}=k_{max}=m$ and
$m_{-}=0$. For the term with $\eta=0$, fix $h_{+}$ and $j$. As
$\Delta_{w}^{j}(\overline{p}_{v,h_{+},0})(\eta)$ does not vanish for $\eta=0$
only if $2j=m-h_{+}$, we fix only $j$ and write $h_{+}=m-2j$. Recall that for
$\eta=0$ the values which $\gamma \in L^{*}/L$ attains are the images of
$\frac{\delta z}{N}$ for $\delta\in\mathbb{Z}/N\mathbb{Z}$ (where $N$ is defined
by $(L,z)=N\mathbb{Z}$). Combining this with Equation \eqref{Lapjpvh+} and Lemma
7.3 of \cite{[B1]} thus yields the expression
\[\sqrt{2}|Y|\sum_{j,k,n}\frac{n^{m-2j}}{(2i)^{m-2j}j!(-8\pi)^{j}}\cdot\frac{i^{
m-2j}}{2}\cdot\frac{m!}{(m-2j)!(Y^{2})^{j}}\sum_{\delta\in\mathbb{Z}/N\mathbb{Z}
}\mathbf{e}\bigg(\frac{\delta n}{N}\bigg)\times\] \[\times c_{\frac{\delta
z}{N},0,k}\cdot\bigg(\frac{\pi
n^{2}Y^{2}}{2}\bigg)^{j-k-\frac{1}{2}-s}\Gamma\bigg(s+\frac{1}{2}+k-j\bigg).\]
Collecting powers gives \[\sum_{k,\delta}c_{\frac{\delta
z}{N},0,k}\frac{2^{k-m}}{(Y^{2})^{k}\pi^{k+\frac{1}{2}}}\sum_{j}\frac{(-1)^{j}m!
}{4^{j}j!(m-2j)!}\Gamma\bigg(s+\frac{1}{2}+k-j\bigg)\bigg(\frac{2}{\pi
Y^{2}}\bigg)^{s}\times\]
\begin{equation}
\times\sum_{n}\mathbf{e}\bigg(\frac{\delta n}{N}\bigg)n^{m-2k-1-2s}.
\label{eta0}
\end{equation}
The sum over $n$ equals
$N^{m-2k-1-2s}\sum_{\varepsilon=1}^{N}\mathbf{e}\big(\frac{\delta\varepsilon}{N}
\big)\zeta\big(1+2k-m+2s,\frac{\varepsilon}{N}\big)$, involving values of the
Hurwitz zeta function. Now, Equation \eqref{coeff} implies that $c_{\frac{\delta
z}{N},0,k}$ equals
$\prod_{r=0}^{k-1}\big(r+\frac{b_{-}}{2}\big)\cdot\frac{c_{\frac{\delta
z}{N},0}}{(4\pi)^{m}}$ for $k=m$ and vanishes otherwise. Since the Hurwitz zeta
function is holomorphic at $m+1$, the constant term of this expression at $s=0$
is obtained by substitution. The fact that
$\Gamma\big(\frac{1}{2}+k-j\big)\in\mathbb{Q}\cdot\sqrt{\pi}$ renders the
expression from Equation \eqref{eta0}
\[\frac{\widetilde{a}}{(Y^{2})^{m}\pi^{2m}}\sum_{\delta\in\mathbb{Z}/N\mathbb{Z}
}c_{\frac{\delta
z}{N},0}\sum_{\varepsilon=1}^{N}\mathbf{e}\bigg(\frac{\delta\varepsilon}{N}
\bigg)\zeta\bigg(m+1,\frac{\varepsilon}{N}\bigg)\] with some
$\widetilde{a}\in\mathbb{Q}$. Since this is a constant times
$\frac{1}{(Y^{2})^{m}}$, adding it to the term with $\Phi_{K}$ shows that
$\varphi(Y)$ has the desired form. This proves the theorem.
\end{proof}

A remark about the evaluation of the constant term $P_{2}$ in Theorem 14.3 of
\cite{[B1]} is in order here. In the case of weakly holomorphic $F$ considered
there, only $k=0$ appears. Evaluating the Hurwitz zeta function thus obtained
and using the symmetry and duplication formulae for the gamma function shows
that the true value of $P_{2}$ is half the value given in the proof of Theorem
14.3 of \cite{[B1]}. The constant appearing in the assertion of that theorem is,
on the other hand, correct. One should thus be careful when evaluating $P_{2}$
as the limit of the expression for $\eta\neq0$ (indeed, with $(\eta,W)<0$ one
gets 0). In any case, the proof of Theorem \ref{PhiLmm0} shows how to evaluate
this term directly.

Note that the coefficients of the polynomial part of $\varphi(Y)$ involve the
lattice $K$, some roots of unity, and the Fourier coefficients of the modular
form $f$ (this is the reason for the coefficient $\frac{\sqrt{2}}{\pi^{m-1}}$).
Hence for ``algebraic'' $f$ these coefficients are algebraic. On the other hand,
evaluating the constant coming from Equation \eqref{eta0} for $\eta=0$ is very
hard. For example, for $N=1$ we must take even $m$, yielding the values of the
Riemann zeta function at odd positive integers, whose properties (not to mention
a finite formula) are not yet known.

We remark that Theorem \ref{PhiLmm0} extends to the case $m=0$, if $c_{0,0}=0$.
In general, $\Delta^{G}_{0}\frac{1}{2}\Phi_{L,0,0,0}(v,F)$ is a non-zero
constant multiple of $c_{0,0}$. Indeed, in this case one obtains the usual theta
lift with $p=1$ from Theorem 13.3 of \cite{[B1]} (up to a factor of 2),
$\ln|\Psi|^{2}$ is harmonic for meromorphic $\Psi$, and $\Delta^{G}_{0}=8\Omega$
sends $\ln|Y|$ to a non-zero constant by Equation (4.3) of \cite{[Bru]}. We
mention that Theorem \ref{PhiLmm0}, as well as Theorems 4.6 and 4.7 of
\cite{[Bru]}, provide answers to Problem 16.6 of \cite{[B1]} in some
(interesting) special cases.

\smallskip

We remark that $\frac{i^{m}}{2}\Phi_{L,m,m,0}(v,F)$ can be written in terms of
the weakly holomorphic modular form $f$ itself, namely as
$\frac{(-i)^{m}}{2^{m+1}}\Phi_{L}(v,f,p_{v})$ with the polynomial
$p_{v}(\lambda)=P_{m,m,0}(\lambda,Z)(\lambda_{v_{-}}^{2})^{m}$ (recall that
$P_{m,m,0}$ is $\Delta_{v_{+}}$-harmonic). This is a consequence of Equation
\eqref{LTheta}, using an argument similar to Lemmas \ref{intRkL}, \ref{boun0},
and \ref{Lapmov}. This is useful in case one wishes to apply the embedding trick
from Section 8 of \cite{[B1]}. Indeed, the lifted modular form is then divided
by the classical holomorphic cusp form $\Delta$ of weight 12 (not to be confused
with the various Laplacians, whose notation also involves the symbol $\Delta$).
Now, if $f$ is weakly holomorphic then so is $\frac{f}{\Delta}$, while if $F$ is
an eigenfunction of $\Delta_{k}$ then $\frac{F}{\Delta}$ does not necessarily
share this property.

\subsection{The Case $b_{-}=1$}

In the case $b_{-}=1$ we may consider our lattices as embedded in the vector
space $V=M_{2}(\mathbb{R})_{0}$ of traceless $2\times2$ real matrices. This
space has signature $(2,1)$ with $(A,B)=Tr(AB)$ and $A^{2}=-2\det A$. Take
$z=\binom{\ \ \ 1/\beta}{}$ for some non-zero $\beta$ and $\zeta=\binom{\ \
-h/2\beta}{\beta\ \ \ \ \ \ \ }$ (where $h=\zeta^{2}$). Then
$K_{\mathbb{R}}\cong\{z,\zeta\}^{\perp}$ is the space of traceless diagonal
matrices. We send $x\in\mathbb{R}$ to $\binom{\beta x\ \ \ \ \
\ }{\ \ \ \ -\beta x} \in K_{\mathbb{R}}$ with the norm $2\beta^{2}x^{2}$, and
take $C$ to be the cone of positive reals. Thus $G(V) \cong K_{\mathbb{R}}+iC$
is identified with $\mathcal{H}$. Explicitly, given $\tau\in\mathcal{H}$, the
vector $Z_{v,V}$ is $\beta M_{\tau}$ and $v_{-}=\mathbb{R}J_{\tau}$ using the
matrices defined in the beginning of Section \ref{CMUF}. Automorphic forms are
modular forms of twice the weight, as Equation \eqref{Jtauconj} shows. The group
$SL_{2}(\mathbb{R})$ acts on $V$ by conjugation, yielding an isomorphism
$PSL_{2}(\mathbb{R}) \cong SO^{+}(V)$. We remark that right multiplication by
$S$ (and rescaling the bilinear form) gives the model in which the vector space
consists of the symmetric $2\times2$ matrices from Example 5.1 of \cite{[B2]}.
In order for $z$ to be a primitive norm 0 vector in the lattice $L$ considered
there we must take $\beta=\sqrt{N}$.

\medskip

Consider now $\Gamma^{+}=\Gamma \cap SO^{+}(L_{\mathbb{R}})$ (in the case
$b_{-}=1$) as a Fuchsian subgroup of $PSL_{2}(\mathbb{R})$.
$\frac{i^{m}}{2}\Phi_{L,m,m,0}(v,F)$ is then a modular form of weight $2m$ on
$\mathcal{H}$ with respect to $\Gamma^{+}$, which has eigenvalue $-2m$ with
respect to $\Delta^{G}_{m}=\Delta_{2m}$. A negative norm vector $\lambda \in
L^{*}$ is a multiple of $J_{\sigma}$ for some $\sigma=s+it\in\mathcal{H}$, and
then $\lambda^{\perp}=\{\sigma\}$. By a slight abuse of notation, we replace the
variable $v \in G(V)$ of $\Phi$ by $\tau=x+iy$ once more (as we consider only
$\Phi$ and not $\Theta$ here, this should lead to no confusion). The description
of $\frac{i^{m}}{2}\delta_{2m}\Phi_{L,m,m,0}(v,F)$ is given in

\begin{thm}
The function $\frac{i^{m}}{2}\delta_{2m}\Phi_{L,m,m,0}(v,F)$ is a meromorphic
modular form of weight $2m+2$ with respect to $\Gamma^{+}$. Its poles are at
points $\sigma\in\mathcal{H}$ for which a real multiple of $J_{\sigma}$ lies in
(the isomorphic copy of) $L^{*}$. The principal part at such $\sigma$ is
\[\frac{i}{(4\pi)^{m+1}}\sum_{\alpha J_{\sigma} \in L^{*}}c_{\alpha
J_{\sigma},-\alpha^{2}}\frac{\alpha^{m}}{\beta^{m}}\cdot\frac{m!(2it)^{m+1}}{
(\tau-\sigma)^{m+1}(\tau-\overline{\sigma})^{m+1}}.\] In case $\Gamma^{+}$ has
cusps, the Fourier expansion at such a cusp is
\[\sum_{r>0}\frac{r^{m}}{\beta^{2m}}\sum_{d|r}d^{m+1}\sum_{\gamma|_{z^{\perp}}
=\frac{d}{2\beta^{2}}}c_{\gamma,\frac{d^{2}}{4\beta^{2}}}\mathbf{e}\bigg(\frac{r
}{d}(\gamma,\zeta)\bigg)q^{r},\] plus some constant if $m=0$. \label{b-=1mer}
\end{thm}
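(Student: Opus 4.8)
The plan is to produce $\frac{i^m}{2}\delta_{2m}\Phi_{L,m,m,0}(v,F)$ by applying the raising operator $\delta_{2m}=-R_{2m}/4\pi$ to the weight $2m$ form $G:=\frac{i^m}{2}\Phi_{L,m,m,0}(v,F)$ supplied by Theorem \ref{PhiLmm0}, reading off all analytic data from that theorem. First I would record that for $b_{-}=1$ Corollary \ref{DeltaGmPhi} together with the identification $\Delta^{G}_{m}=\Delta_{2m}$ gives $\Delta_{2m}G=2mG$. Writing $\Delta_{k}=LR_{k}+k$ (which follows by the same direct computation that produces $\Delta_{k}=R_{k-2}L$ in Section \ref{RelDO}), this eigenvalue relation becomes $LR_{2m}G=(\Delta_{2m}-2m)G=0$, whence $L\delta_{2m}G=-\frac{1}{4\pi}LR_{2m}G=0$. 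Since $L=-2iy^{2}\partial_{\overline{\tau}}$, this says precisely that $\delta_{2m}G$ is holomorphic off the singular locus; combined with the facts that $\delta_{2m}$ raises the weight by $2$ and preserves $\Gamma^{+}_{0}$-modularity, it establishes that $\delta_{2m}G$ is a meromorphic modular form of weight $2m+2$ whose only possible poles are at the CM points $\sigma$, i.e.\ the points $\lambda^{\perp}$ for $\lambda\in L^{*}$ of negative norm.

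For the principal part I would argue that, since $\delta_{2m}$ has smooth coefficients on $\mathcal{H}$, it sends the real-analytic part of $G$ near $\sigma$ to a function that is smooth at $\sigma$ and (being the difference of $\delta_{2m}G$ and $\delta_{2m}$ of the singular part, both holomorphic off $\sigma$) holomorphic off $\sigma$; hence it extends holomorphically across $\sigma$ and contributes no principal part. Thus the principal part of $\delta_{2m}G$ at $\sigma$ equals that of $\delta_{2m}$ applied to the explicit singular expression in Theorem \ref{PhiLmm0}. The next step is to rewrite that expression in the $\tau$-variable using the $(2,1)$-model of Section \ref{CMUF}: with $\lambda=\alpha J_{\sigma}$, so $\lambda^{2}=-2\alpha^{2}$ matching the index $c_{\alpha J_{\sigma},-\alpha^{2}}$, one finds that $(\lambda,Z_{v,V})$ vanishes to first order at $\tau=\sigma$ and at $\tau=\overline{\sigma}$, giving the factor $(\tau-\sigma)(\tau-\overline{\sigma})$, while $(\lambda,\overline{Z_{v,V}})$ carries the factor $(\overline{\tau}-\sigma)(\overline{\tau}-\overline{\sigma})$ and $Y^{2}=2\beta^{2}y^{2}$, with $t=\Im\sigma$. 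Substituting and applying $\delta_{2m}=\frac{\partial_{\tau}}{2\pi i}-\frac{2m}{4\pi y}$, the derivative of the logarithmic term produces the simple pole $\partial_{\tau}\ln(\tau-\sigma)=(\tau-\sigma)^{-1}$ while the finite sum over $k$ supplies the higher-order poles (the $k=0$ term, carrying $(\lambda,Z_{v,V})^{-m}$, yielding order $m+1$ after $\partial_{\tau}$); collecting these and using the factorials of \eqref{coeff} and of Theorem \ref{PhiLmm0}, I expect the $\overline{\tau}$-dependence to cancel, as it must by the holomorphy already proved, and the terms to assemble into $\frac{i}{(4\pi)^{m+1}}\sum_{\alpha J_{\sigma}\in L^{*}}c_{\alpha J_{\sigma},-\alpha^{2}}\frac{\alpha^{m}}{\beta^{m}}\cdot\frac{m!(2it)^{m+1}}{(\tau-\sigma)^{m+1}(\tau-\overline{\sigma})^{m+1}}$, with $2it=\sigma-\overline{\sigma}$ a constant.

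For the Fourier expansion at a cusp I would likewise apply $\delta_{2m}$ to the expansion of Theorem \ref{PhiLmm0} specialized to $b_{-}=1$, where $K^{*}$ is a rank-one lattice and $(\rho,Z)$ is a scalar multiple of $\tau$. The holomorphy of $\delta_{2m}G$ forces the non-holomorphic monomials $(\rho,Y)^{k-C}/(Y^{2})^{k}$ and the anti-holomorphic contributions $\mathbf{e}\big((\rho,\overline{Z})\big)$ to drop out, leaving only genuine $q$-powers whose coefficients are read off from \eqref{AkCrho}. Reindexing $\rho=(r/d)\rho_{0}$ with $\rho_{0}$ primitive and tracking the power of $n$ in \eqref{AkCrho} converts the sum into the divisor sum $\sum_{d\mid r}d^{m+1}$, yielding $\sum_{r>0}\frac{r^{m}}{\beta^{2m}}\sum_{d\mid r}d^{m+1}\sum_{\gamma|_{z^{\perp}}=\frac{d}{2\beta^{2}}}c_{\gamma,\frac{d^{2}}{4\beta^{2}}}\mathbf{e}\big(\tfrac{r}{d}(\gamma,\zeta)\big)q^{r}$, with an extra constant when $m=0$ handled exactly as in the $m=0$ discussion following Theorem \ref{PhiLmm0}; the field-of-definition claim is inherited from the corresponding statement there.

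The main obstacle I anticipate is the principal-part computation: showing that, after applying $\delta_{2m}$, the logarithmic singularity and the $m$ separate polar terms of Theorem \ref{PhiLmm0} combine, with every $\overline{\tau}$-dependence cancelling, into a single pole of order $m+1$ with exactly the constant $\frac{m!(2it)^{m+1}}{(4\pi)^{m+1}}$ up to the lattice data. The bookkeeping of the binomial and factorial factors, and the verification that $2it=\sigma-\overline{\sigma}$ packages the $\beta$- and $t$-powers correctly, is where the real work lies; the modularity, the holomorphy, and the cusp expansion are comparatively formal once Theorem \ref{PhiLmm0} is in hand.
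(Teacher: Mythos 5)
Your outline reproduces the paper's own proof in all three of its components: meromorphy from the eigenvalue relation of Corollary \ref{DeltaGmPhi} plus the raising operator, the principal part by applying $\delta_{2m}$ to the explicit singularity of Theorem \ref{PhiLmm0} in the $(2,1)$-model (with the same substitutions $\lambda^{2}=-2\alpha^{2}$, $Y^{2}=2\beta^{2}y^{2}$, and $(J_{\sigma},\beta M_{\tau})=-\frac{\beta}{t}(\tau-\sigma)(\tau-\overline{\sigma})$), and the cusp expansion by letting holomorphy of the image annihilate the anti-holomorphic part and all positive-depth terms, so that only $\psi_{0}'/2\pi i$ survives and Equation \eqref{AkCrho} with the reindexing $d=r/n$ gives the divisor sum; your treatment of the $m=0$ constant and the field-of-definition claim also matches. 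One small point where you improve on the paper: your identity $\Delta_{k}=LR_{k}+k$ (which indeed follows from $R_{k}=2i\partial_{\tau}+k/y$ and $L=-2iy^{2}\partial_{\overline{\tau}}$), giving $L\delta_{2m}G=-\frac{1}{4\pi}(\Delta_{2m}-2m)G=0$ directly, is sharper than the paper's one-line assertion preceding the theorem, since mere harmonicity of $\delta_{2m}G$ (which is what the eigenvalue-shift rule for $\delta_{l}$ yields) would not by itself give holomorphy, whereas $L\delta_{2m}G=0$ does.

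The substantive divergence is in the principal-part step, and there your shortcut cannot fully replace the paper's computation. The paper does not use the a priori holomorphy to infer cancellation; it \emph{proves} the cancellation directly, via the expansions $\overline{\tau}-\sigma=(\tau-\sigma)-2iy$ and $\tau-\overline{\sigma}=(\tau-\sigma)+2it$, several index changes, a Vandermonde-type identity for the inner sum, and finally Lemma \ref{binompol}, obtaining meromorphy of the singular part as an output and the constant $\frac{m!(2it)^{m+1}}{(4\pi)^{m+1}}$ along the way. Your appeal to ``holomorphy already proved'' is legitimate but needs repair and yields less: (i) the singular expansion of Theorem \ref{PhiLmm0} is only defined modulo smooth functions, and $\delta_{2m}$ of that smooth remainder need not be holomorphic, so your claim that the real-analytic part contributes no principal part is circular as stated (it presupposes that $\delta_{2m}$ of the singular expression is holomorphic off $\sigma$); the correct formulation works modulo smooth functions and uses the fact that the finitely many singular monomials $(\overline{\tau}-\sigma)^{a}(\overline{\tau}-\overline{\sigma})^{b}y^{-c}(\tau-\sigma)^{-d}(\tau-\overline{\sigma})^{-e}$ are linearly independent modulo sums of smooth and meromorphic functions, whence the non-meromorphic ones must cancel identically; (ii) even granting this, holomorphy gives no information about the meromorphic terms, so the pole order $m+1$ and the exact constant still require collecting precisely the contributions the paper collects --- the shortcut only spares you verifying total cancellation of the non-meromorphic pieces, not the binomial bookkeeping (including a further application of Lemma \ref{binompol}) that constitutes the bulk of the paper's proof, which you have correctly flagged as the remaining work but not carried out.
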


\begin{proof}
We first observe that applying $\delta_{2m}$ to a modular form of weight $2m$
having eigenvalue $-2m$ yields a meromorphic modular form of weight $2m+2$. The
singularities of $\frac{i^{m}}{2}\delta_{2m}\Phi_{L,m,m,0}(v,F)$ are the
$\delta_{2m}$-images of the singularities of
$\frac{i^{m}}{2}\Phi_{L,m,m,0}(v,F)$ given in Theorem \ref{PhiLmm0}. The
constant $\prod_{r=0}^{k-1}\big(r+\frac{b_{-}}{2}\big)$ is
$\frac{(2k)!}{4^{k}k!}$, for $\lambda=J_{\sigma}$ we have $\lambda^{2}=-2$,
and $Y^{2}$ equals $2\beta^{2}y^{2}$. The pairing of $Z_{v,V}=\beta M_{\tau}$
and of $\overline{Z_{v,V}}=\beta \overline{M_{\tau}}$ with $\lambda=J_{\sigma}$
gives $-\frac{\beta}{t}(\tau-\sigma)(\tau-\overline{\sigma})$ and its complex
conjugate respectively. Hence the singularity of
$\frac{i^{m}}{2}\Phi_{L,m,m,0}(v,F)$ at $\sigma$ is \[\frac{1}{2}\!\sum_{\alpha
J_{\sigma} \in L^{*}}\!\!\!c_{\alpha
J_{\sigma},-\alpha^{2}}\frac{(i\alpha)^{m}}{(2\pi)^{m}}\!\Bigg[\frac{(2m)!}{4^{m
}m!}\cdot\frac{[-(\overline{\tau}-\sigma)(\overline{\tau}-\overline{\sigma})]^{m
}}{(4y^{2})^{m}\beta^{m}t^{m}}\cdot\!\bigg(\!-\ln\frac{|(\tau-\sigma)|^{2}
|(\tau-\overline{\sigma})|^{2}}{2y^{2}}\bigg)+\]
\[+\sum_{k=0}^{m-1}\frac{m!(-1)^{m-k}}{2^{k}k!}\cdot\frac{(2k)!}{4^{k}k!}\frac{
[-(\overline{\tau}-\sigma)(\overline{\tau}-\overline{\sigma})]^{k}t^{m-2k}}{[
-(\tau-\sigma)(\tau-\overline{\sigma})]^{m-k}\beta^{m}(2y^{2})^{k}}\cdot\frac{1}
{m-k}\Bigg].\] We write this expression as
\[\frac{m!}{2}\sum_{\alpha J_{\sigma} \in L^{*}}c_{\alpha
J_{\sigma},-\alpha^{2}}\bigg(\frac{i\alpha
t}{2\pi\beta}\bigg)^{m}\sum_{k=0}^{m}\frac{(2k)!(-1)^{k}}{(k!)^{2}(4yt)^{2k}}
\cdot(\overline{\tau}-\sigma)^{k}(\overline{\tau}-\overline{\sigma})^{k}g_{k}
(\tau),\] with $g_{k}(\tau)$ being
$\frac{1}{(m-k)(\tau-\sigma)^{m-k}(\tau-\overline{\sigma})^{m-k}}$ for $0 \leq
k<m$ and $-\ln\frac{|(\tau-\sigma)|^{2}|(\tau-\overline{\sigma})|^{2}}{2y^{2}}$
for $k=m$. Applying $\delta_{2m}$ reduces to letting $\delta_{2m-2k}$ operate on
$g_{k}$ for each $k$. Hence the singularity of
$\frac{i^{m}}{2}\delta_{2m}\Phi_{L,m,m,0}(v,F)$ at $\sigma$ is
\[\frac{m!i}{4\pi}\sum_{\alpha J_{\sigma} \in L^{*}}c_{\alpha
J_{\sigma},-\alpha^{2}}\bigg(\frac{i\alpha
t}{2\pi\beta}\bigg)^{m}\sum_{k=0}^{m}\frac{(2k)!(-1)^{k}}{(k!)^{2}(4yt)^{2k}}
(\overline{\tau}-\sigma)^{k}(\overline{\tau}-\overline{\sigma})^{k}\times\]
\[\times\frac{(\tau-\overline{\sigma})+(\tau-\sigma)+\frac{
2(\tau-\sigma)(\tau-\overline{\sigma})}{-2iy}}{(\tau-\sigma)^{m+1-k}
(\tau-\overline{\sigma})^{m+1-k}}.\]

For ease of presentation we omit, until the last step, the coefficient in front
of the sum over $k$. Expand the $k$th powers of
$\overline{\tau}-\sigma=(\tau-\sigma)-2iy$ and
$\overline{\tau}-\overline{\sigma}=(\tau-\overline{\sigma})-2iy$, and write
$\frac{(-1)^{k}}{(4yt)^{2k}}$ as $\frac{1}{(2t)^{2k}(-2iy)^{2k}}$. This gives
\[\sum_{k=0}^{m}\frac{(2k)!}{(2t)^{2k}}\!\sum_{a,b}\frac{1}{
a!(k-a)!b!(k-b)!(-2iy)^{a+b}}\bigg[\frac{1}{(\tau-\sigma)^{m+1-k-a}
(\tau-\overline{\sigma})^{m-k-b}}+\]
\[+\frac{1}{(\tau-\sigma)^{m-k-a}(\tau-\overline{\sigma})^{m+1-k-b}}+\frac{\frac
{2}{-2iy}}{(\tau-\sigma)^{m-k-a}(\tau-\overline{\sigma})^{m-k-b}}\bigg].\] Now
fix $c$ and $l$, and collect the terms involving
$\frac{1}{(\tau-\sigma)^{m-k-l+1}(-2iy)^{c}}$. We thus take $a=l$ and $b=c-l$ in
the first summand within the square brackets, $a=l-1$ and $b=c-l+1$ in the
second summand, and $a=l-1$ and $b=c-l$ in the third summand. This gives
\[\sum_{k,c,l}\frac{(2k)!(2t)^{-2k}}{(\tau-\sigma)^{m-k-l+1}(\tau-\overline{
\sigma})^{m-k-c+l}(-2iy)^{c}}\bigg[\frac{1}{l!(k-l)!(c-l)!(k+l-c)!}+\]
\[+\frac{1}{(l-1)!(k+1-l)!(c-l+1)!(k+l-1-c)!}+\]
\[+\frac{2}{(l-1)!(k+1-l)!(c-l)!(k+l-c)!}\bigg].\] The sum of the combinatorial
expressions reduces to $\frac{(k+1)(c+1)}{l!(k+1-l)!(c-l+1)!(k+l-c)!}$, so that
we can write the latter expression as
\[\sum_{c}\frac{(c+1)(-2iy)^{-c}}{(\tau-\sigma)^{m+1}(\tau-\overline{\sigma})^{
m+1-c}}\sum_{k,l}\frac{(2k)!(k+1)(\tau-\sigma)^{k+l}(\tau-\overline{\sigma})^{
k+1-l}}{(2t)^{2k}l!(k+1-l)!(c-l+1)!(k+l-c)!}.\]

Expand the power of $\tau-\overline{\sigma}=\tau-\sigma+2it$, and write
$(2t)^{2k}=(-1)^{k}(2it)^{2k}$. We thus obtain for every $c$ a sum of the form
\[\sum_{k,l,h}\frac{(-1)^{k}(2k)!(k+1)(\tau-\sigma)^{k+l+h}}{(2it)^{k+l+h-1}
l!(c-l+1)!(k+l-c)!h!(k+1-l-h)!}.\] The index change $k=s-l-h$ yields
\[\sum_{c,s}\!\frac{(-1)^{s}(c+\!1)(-2iy)^{-c}(2it)^{1-s}}{(\tau-\sigma)^{m+1-s}
(\tau-\overline{\sigma})^{m+1-c}}\!\sum_{l,h}\!\frac{(-1)^{l+h}
(2s-2l-2h)!(s-l-h+1)}{ l!(c-\!l+\!1)!(s-\!h-c)!h!(s+\!1-2l-2h)!}.\] For every
$c$ and $s$ now write $r=h+l$, and the corresponding term becomes
\[\sum_{r}\frac{(-1)^{r}(2s-2r)!(s-r+1)}{(s+1-2r)!(c+1)!(s-c)!}\sum_{l}\binom{
c+1}{l}\binom{s-c}{r-l}.\] A well-known combinatorial identity shows that the
sum over $l$ is simply $\binom{s+1}{r}$. By writing $(s+1-r)\binom{s+1}{r}$ as
$(s+1)\binom{s}{r}$ and $\frac{c+1}{(c+1)!(s-c)!}$ as
$\frac{1}{s!}\binom{s}{c}$, the expression in question reduces to
\[\sum_{c,s}\frac{(-1)^{s}(s+1)(-2iy)^{-c}(2it)^{1-s}}{(\tau-\sigma)^{m+1-s}
(\tau-\overline{\sigma})^{m+1-c}s!}\binom{s}{c}\sum_{r}(-1)^{r}\binom{s}{r}\frac
{(2s-2r)!}{(s+1-2r)!}.\] As
$\frac{(2s-2r)!}{(s+1-2r)!}=\prod_{i=s+2}^{2s}(i-2r)$ is a polynomial of degree
$s-1<s$ for all $s>0$, Lemma \ref{binompol} shows that the sum over $r$ vanishes
unless $s=0$. Thus only the term
$\frac{2it}{(\tau-\sigma)^{m+1}(\tau-\overline{\sigma})^{m+1}}$, corresponding
to $s=c=0$, survives. Multiplying by the factor we omitted above now gives the
asserted singularity at $\sigma$.

Assume that $\Gamma^{+}$ has a cusp, corresponding to a primitive norm 0 vector
$z \in L$ which is normalized as above. Theorem \ref{PhiLmm0} shows that for
$b_{-}=1$, the expansion of $\frac{i^{m}}{2}\Phi_{L,m,m,0}(v,F)$ at the cusp is
the sum of an almost holomorphic function of depth $\leq 2m$, the complex
conjugate of such a function, and an expression of the form
$\frac{B}{y^{m-1}}+\frac{D}{y^{2m}}$ for some constants $B$ and $D$. Note that
the condition on the sign of $(\rho,W)$ implies a sub-exponential growth at the
cusp. Moreover, the eigenvalue $-2m$ implies that $B=0$ (unless $m=0$) and that
the conjugate almost holomorphic part combines with $\frac{D}{y^{2m}}$ to a give
$\frac{h(\tau)}{y^{2m}}$ with $h$ anti-holomorphic. $\delta_{2m}$ thus
annihilates this part. Now, if the almost holomorphic part is
$\sum_{r}\frac{\psi_{r}(\tau)}{y^{r}}$ with meromorphic functions $\psi_{r}$
then the meromorphicity of $\frac{i^{m}}{2}\delta_{2m}\Phi_{L,m,m,0}(v,F)$ shows
that its expansion is just $\frac{\psi_{0}'(\tau)}{2\pi i}$. We normalize $z$
and $\beta$ such that $K^{*}=\frac{1}{2\beta^{2}}\mathbb{Z}\subseteq\mathbb{R}$
and $(\rho,W)>0$ is equivalent, for $\rho=\frac{r}{2\beta^{2}}$, to $r>0$. Then
the part with $(\rho,W)>0$ in the expansion of
$\frac{i^{m}}{2}\Phi_{L,m,m,0}(v,F)$ in Theorem \ref{PhiLmm0} is
$\sum_{k,C}A_{k,C,\rho}\frac{r^{k-C}}{(2\beta^{2})^{k}(\pi
y)^{k+C}}\mathbf{e}(r\tau)$. Hence
$\psi_{0}(\tau)=\sum_{r>0}A_{0,0,\frac{r}{2\beta^{2}}}q^{r}$, so that
$\frac{\psi_{0}'(\tau)}{2\pi i}=\sum_{r>0}rA_{0,0,\frac{r}{2\beta^{2}}}q^{r}$.
We substitute $A_{k,C,\rho}$ from Equation \eqref{AkCrho}, the value
$a_{0,0,+1}=2^{m}$, and the norm $\frac{s^{2}}{2\beta^{2}}$ of
$\frac{s}{2\beta^{2}} \in K^{*}$, and make the index change $d=\frac{r}{n}$.
This yields the asserted Fourier expansion for
$\frac{i^{m}}{2}\delta_{2m}\Phi_{L,m,m,0}(v,F)$ at the cusp, and completes the
proof of the theorem.
\end{proof}
Note that the Fourier coefficients in Theorem \ref{b-=1mer} are based, up to
rational numbers and the global power of $\beta$, on roots of unity of bounded
order and the Fourier coefficients of the weight $\frac{1}{2}-m$ weakly
holomorphic modular form $f$.

As an example for the map
$f\mapsto\frac{i^{m}}{2}\delta_{2m}\Phi_{L,m,m,0}(v,F)$, fix $N\in\mathbb{N}$,
and take $L$ to be the lattice spanned by $z=\binom{\ \ \ 1/\sqrt{N}}{}$,
$\zeta=\binom{}{\sqrt{N}\ \ \ }$, and the vector $\binom{\sqrt{N}\ \ \ \ \ \ }{\
\ \ \ -\sqrt{N}}$. In this case $\beta=\sqrt{N}$, $L^{*}/L=K^{*}/K$ is cyclic of
order $2N$, $\rho_{L}=\rho_{K}$, $(z,L)=\mathbb{Z}$ and $\zeta \in L$, and
$\Gamma^{+}=\Gamma_{0}(N)$ acts by conjugation. Hence Theorem \ref{b-=1mer}
yields a singular Shimura-type correspondence, as stated in the following

\begin{cor}
Let
$f(\tau)=\sum_{\gamma\in\mathbb{Z}/2N\mathbb{Z}}\sum_{n\in\mathbb{Q}}c_{\gamma,n
}q^{n}e_{\gamma}$ be a weakly holomorphic modular form of weight $\frac{1}{2}-m$
and representation $\rho_{K}$ for some $m\in\mathbb{N}$. The $q$-series
\[\sum_{r>0}\frac{r^{m}}{N^{m}}\bigg(\sum_{d|r}d^{m+1}c_{d,\frac{d^{2}}{4N}}
\bigg)q^{r}\] defines a meromorphic modular form of weight $2m+2$ with respect
to $\Gamma_{0}(N)$, which has poles of order $m+1$ at some points in
$\mathcal{H}$ which are quadratic over $\mathbb{Q}$. \label{Shimcor}
\end{cor}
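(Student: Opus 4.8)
The plan is to specialize Theorem~\ref{b-=1mer} to the concrete lattice $L$ introduced just above the statement, with $\beta=\sqrt{N}$. First I would dispose of the lattice-theoretic bookkeeping. Since $z^{2}=\zeta^{2}=0$ and $(z,\zeta)=1$, the pair $z,\zeta$ spans a unimodular even hyperbolic plane $U$, and because the generator of $K$ is orthogonal to both $z$ and $\zeta$ one gets an orthogonal splitting $L=U\oplus K$. Hence $L^{*}=U\oplus K^{*}$, so $L^{*}/L\cong K^{*}/K$ and (as the signature of $U$ is $(1,1)$, making its contribution to the Weil representation trivial) $\rho_{L}=\rho_{K}$. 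As the generator of $K$ has norm $2N$, the group $K^{*}/K$ is cyclic of order $2N$ and $K^{*}$ is identified with $\frac{1}{2N}\mathbb{Z}$. I would then identify $\Gamma^{+}_{0}$ with $\Gamma_{0}(N)$ acting by conjugation, using the description of the $SL_{2}(\mathbb{R})$-action on $V=M_{2}(\mathbb{R})_{0}$ recalled before the theorem and the requirement that an isometry both preserve $L$ and act trivially on $L^{*}/L$.

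With these identifications, the next step is to read off the Fourier expansion from Theorem~\ref{b-=1mer} and simplify it. Substituting $\beta^{2}=N$ turns $\frac{r^{m}}{\beta^{2m}}$ into $\frac{r^{m}}{N^{m}}$, the index $\frac{d}{2\beta^{2}}$ into $\frac{d}{2N}$, and the norm $\frac{d^{2}}{4\beta^{2}}$ into $\frac{d^{2}}{4N}$. Because $\zeta\in L$ and $\gamma\in L^{*}$, the pairing $(\gamma,\zeta)$ lies in $\mathbb{Z}$; since $d\mid r$, the argument $\frac{r}{d}(\gamma,\zeta)$ is an integer, so the factor $\mathbf{e}\big(\frac{r}{d}(\gamma,\zeta)\big)$ equals $1$. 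Finally, under $L^{*}/L\cong K^{*}/K\cong\mathbb{Z}/2N\mathbb{Z}$ the condition $\gamma|_{z^{\perp}}=\frac{d}{2N}$ singles out the unique class $d\bmod 2N$, collapsing the inner sum over $\gamma$ to the single coefficient $c_{d,\,d^{2}/(4N)}$. These reductions convert the expansion of Theorem~\ref{b-=1mer} verbatim into the asserted $q$-series, and the weight $2m+2$ together with $\Gamma^{+}_{0}=\Gamma_{0}(N)$-modularity are inherited directly.

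It remains to describe the poles. Theorem~\ref{b-=1mer} places them at those $\sigma\in\mathcal{H}$ for which a real multiple $\alpha J_{\sigma}$ lies in $L^{*}$, with principal part proportional to $(\tau-\sigma)^{-(m+1)}(\tau-\overline{\sigma})^{-(m+1)}$; since $\overline{\sigma}\neq\sigma$ near such a point, each pole has order exactly $m+1$. I would then translate the condition $\alpha J_{\sigma}\in L^{*}$ into arithmetic: writing $\sigma=s+it$, the matrix $J_{\sigma}=\frac{1}{t}\binom{s\ \ -|\sigma|^{2}}{1\ \ \ \ -s}$ has entries which, after clearing $t$ and imposing membership in $L^{*}$, force $\sigma$ to satisfy a quadratic equation $a\sigma^{2}+b\sigma+c=0$ with integer coefficients, i.e.\ $\sigma$ is a CM point. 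This yields every claimed assertion. The proof is essentially bookkeeping; the one point demanding care is keeping the two roles of the symbol $N$ apart—the norm normalization $\beta^{2}=N$ of the corollary versus the level parameter governed by $(L,z)$, which here is all of $\mathbb{Z}$ (so that the exponential degenerates and the $\gamma$-sum reduces to a single term).
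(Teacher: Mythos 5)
Your proposal is correct and follows essentially the same route as the paper: the paper likewise obtains the corollary by specializing Theorem~\ref{b-=1mer} to this lattice with $\beta=\sqrt{N}$, noting $L^{*}/L=K^{*}/K$, $\rho_{L}=\rho_{K}$, $\Gamma^{+}_{0}=\Gamma_{0}(N)$, and that $(z,L)=\mathbb{Z}$ and $\zeta\in L$ trivialize the exponential and collapse the $\gamma$-sum to the single class $d\in\mathbb{Z}/2N\mathbb{Z}$. You merely supply a little more detail than the paper does (the splitting $L=U\oplus K$ with $U$ an even unimodular hyperbolic plane, the pole order via $(\tau-\overline{\sigma})^{-(m+1)}$ being nonsingular near $\sigma$, and the explicit integral quadratic equation from $\alpha J_{\sigma}\in L^{*}$), all of which checks out.
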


In the case $N=1$ in Corollary \ref{Shimcor} the modular forms of
$\frac{1}{2}-m$ and representation $\rho_{L}$ correspond to weakly holomorphic
modular forms of the same weight with respect to $\Gamma_{0}(4)$ which lie in
the corresponding Kohnen plus-space (see Chapter 5 of \cite{[EZ]} or Proposition
1 of \cite{[K]}, which easily generalize to the weakly holomorphic case). They
exist only if $m$ is even (since $L^{*}/L$ has exponent 2), yielding the
statement given in the corresponding Theorem in the Introduction.

\smallskip

The subsequent work \cite{[Ze2]} investigates the behavior of
$\frac{i^{m}}{2}\Phi_{L,m,m,0}$ under weight changing operators for automorphic
forms on Grassmannians (defined in that reference) in higher dimensions. As
meromorphic images are constructed in that reference, it should be possible to
obtain a Gross--Kohnen--Zagier type theorem for higher-codimensional Heegner
cycles in universal families over higher-dimensional varieties along the lines
of the proof of the main result of this paper (see \cite{[Ze4]} for the case of
dimension 2). On the other hand, \cite{[Ze3]} investigates the modular forms
from Theorem \ref{b-=1mer} further, and relates them to the results of
\cite{[BK]} as well as to other modular objects.

\section{Relations between CM Cycles \label{Rels}}

Let $B$ and $i$ be as in Section \ref{CMUF}, and let $L$ be the lattice
$\tilde{\Lambda} \subseteq B_{0}$ from part $(ii)$ of Corollary \ref{H2lat}. The
arithmetic applications of the theory of theta lifts are based on
\begin{prop}
The following assertions hold:
\begin{enumerate}[$(i)$]
\item The map $\frac{i}{\sqrt{disc(I)}}$ identifies $L_{\mathbb{R}}$ with
$M_{2}(\mathbb{R})_{0}$ as quadratic spaces.
\item A point $\tau\in\mathcal{H}$ is a CM point for $B$ and $i$ if and only if it is of the form $\lambda^{\perp}$ for some $\lambda$ from $L^{*}$.
\end{enumerate}\label{lambdaperpCM}
\end{prop}

\begin{proof}
Part $(i)$ follows from part $(ii)$ of Theorem \ref{H2Aprop}. For part $(ii)$
use the description for $G(L_{\mathbb{R}})$ preceding Theorem \ref{b-=1mer} and
apply Proposition \ref{CMptchar}. This proves the proposition.
\end{proof}

The observation from Proposition \ref{lambdaperpCM} relates the special points
$\lambda^{\perp}$ on $\mathcal{H}$ as a Grassmannian to the CM points on modular
and Shimura curves. This shows that the automorphic forms of \cite{[B1]} have
divisors with arithmetic meaning, and as we shall see below, similar assertion
holds for the singularities of our automorphic forms.

\subsection{$m$-Divisors, Heegner $m$-Divisors, and their Classes}

We now define the objects for which we prove the main result. Let $L$ be an even
lattice of signature $(2,b_{-})$, and let $G(L_{\mathbb{R}})$ be its positive
Grassmannian.
\begin{defn}
An \emph{$m$-divisor} on $G(L_{\mathbb{R}})$ is a locally finite sum of elements
in $Sym^{m}L_{\mathbb{C}} \otimes Div(G(L_{\mathbb{R}}))$, namely of expressions
of the sort
\begin{equation}
\bigg(\prod_{i=1}^{m}\lambda_{i}\bigg) \otimes Y, \label{mdivexp}
\end{equation}
where $\lambda_{i} \in L_{\mathbb{C}}$ and $Y \subseteq G(L_{\mathbb{R}})$ is an
irreducible divisor. Recalling that $O(L_{\mathbb{R}})$ acts on both
$G(L_{\mathbb{R}})$ and $Sym^{m}L_{\mathbb{C}}$, we define an \emph{$m$-divisor
on the quotient $X(\Gamma)=\Gamma \backslash G(L_{\mathbb{R}})$} to be an
$m$-divisor on $G(L_{\mathbb{R}})$ which is invariant under the action of the
discrete subgroup $\Gamma$ of $O(L_{\mathbb{R}})$. An $m$-divisor on
$G(L_{\mathbb{R}})$ or on $X(\Gamma)$ is \emph{of totally negative type} if it
satisfies the following condition: For every term of the form appearing in
Equation \eqref{mdivexp}, each $\lambda_{i}$ lies in (the complexification of)
the space $v_{-}$ corresponding to every point $v \in Y$. This condition is
well-defined, and is equivalent to every term being of the form
$a_{\lambda}\lambda^{m}\otimes\lambda^{\perp}$ for some negative norm vector  $\lambda \in M_{\mathbb{R}}$, with $a_{\lambda}\in\mathbb{C}$. \label{mdiv}
\end{defn}

So $m$-divisors are divisors with coefficients in local systems. Lemma
\ref{abcdz1z2} and part $(ii)$ of Theorem \ref{H2Aprop} show that the local
system from Definition \ref{mdiv} becomes $Sym^{m}V_{2}$ when $b_{-}=1$.

\smallskip

We need to introduce another property of automorphic forms on
$G(L_{\mathbb{R}})$. Let $\Phi$ be an automorphic form of weight $m$ on
$G(L_{\mathbb{R}}) \cong K_{\mathbb{R}}+iC$ which is an eigenform of
$\Delta_{m}^{G}$ with eigenvalue $-2mb_{-}$, and assume first that the lattice
$L$ contains a primitive norm zero vector $z$. The expansion of $\Phi$ around
$z$ in a Weyl chamber $W$ containing $z$ in its closure is of the form
\[\widetilde{\varphi}(Y)+\sum_{k=0}^{m}\sum_{C=0}^{k}\sum_{\rho \in
K^{*}}\frac{A_{k,C,\rho}}{\pi^{k+C}}\frac{(\rho,Y)^{k-C}}{(Y^{2})^{k}}
\times\left\{\begin{array}{cc}\mathbf{e}\big((\rho,Z)\big) & (\rho,W)>0 \\
\mathbf{e}\big((\rho,\overline{Z})\big) & (\rho,W)<0\end{array}\right.\] (as in
Theorem \ref{PhiLmm0}). We call $\Phi$ \emph{algebraic} if the coefficients
$A_{k,C,\rho}$ are algebraic over $\mathbb{Q}$. If $L$ contains no norm zero
vectors, then as in Section 8 of \cite{[B1]}, we can embed $L$ in two different
lattices $M$ and $N$ of signature $(2,b_{-}+24)$, giving embeddings of
$G(L_{\mathbb{R}})$ into the larger Grassmannians $G(M_{\mathbb{R}})$ and
$G(N_{\mathbb{R}})$. Assume that we can present $\Phi$ as the difference of
weight $m$ automorphic forms on $G(M_{\mathbb{R}})$ and $G(N_{\mathbb{R}})$,
which are eigenforms with eigenvalue $-2m(b_{-}+24)$ of the corresponding
Laplacians, minus their singularities. We call $\Phi$ \emph{algebraic} if it is
obtained in this way from \emph{algebraic} automorphic forms on the larger
Grassmannians. We note again that one has to apply the argument carefully in
this case---see the last remark following Theorem \ref{PhiLmm0} above. The
$q$-expansion principle shows that in the case where $b_{-}=1$ and $\Gamma$ has
cusps, the $\delta_{2m}$-images of an algebraic automorphic form of automorphic
weight $m$ is an algebraic meromorphic modular form of $2m+2$. This assertion is
likely to extend also to the case where $\Gamma$ has no cusps.

\smallskip

Let $\Gamma$ be a discrete subgroup of $O^{+}(L_{\mathbb{R}})$ (we will take the
intersection of the discriminant kernel with $SO^{+}(L)$ in our applications).
In general we make the following

\begin{defn}
The $m$-divisor of totally negative type on $X(\Gamma)$, specified as
$\sum_{\lambda \in
L_{\mathbb{R}},\lambda^{2}<0}a_{\lambda}\lambda^{m}\otimes\lambda^{\perp}$ (a
locally finite, $\Gamma$-invariant sum) is called \emph{strongly principal} if
there exists an algebraic automorphic form of weight $m$ on $G(L_{\mathbb{R}})$
whose singularities lie on the divisors $\lambda^{\perp}$, and along each such
$\lambda^{\perp}$ the singularity is
\[\frac{a_{\lambda}}{2}\bigg(\frac{i}{2\pi}\bigg)^{m}\Bigg[\prod_{r=0}^{m-1}
\bigg(r+\frac{b_{-}}{2}\bigg)\cdot\frac{(\lambda,\overline{Z_{v,V}})^{m}}{2^{m}
(Y^{2})^{m}}\cdot\bigg(-\ln\frac{|(\lambda,Z_{v,V})|^{2}}{Y^{2}}\bigg)+\]
\[+\sum_{k=0}^{m-1}\frac{m!}{2^{k}k!}\bigg(\frac{\lambda^{2}}{2}\bigg)^{m-k}
\cdot\prod_{r=0}^{k-1}\bigg(r+\frac{b_{-}}{2}\bigg)\cdot\frac{(\lambda,\overline
{Z_{v,V}})^{k}}{(\lambda,Z_{v,V})^{m-k}(Y^{2})^{k}}\cdot\frac{1}{m-k}\Bigg].\]
\label{pringen}
\end{defn}

If $b_{-}=1$ we adopt the alternative

\begin{defn}
We call the $m$-divisor of totally negative type on $X(\Gamma)$ defined by
$\sum_{\lambda \in
L_{\mathbb{R}},\lambda^{2}<0}a_{\lambda}\lambda^{m}\otimes\lambda^{\perp}$
(again a locally finite, $\Gamma$-invariant sum) \emph{principal} if there
exists an algebraic modular form of weight $2m+2$ on
$G(L_{\mathbb{R}})\cong\mathcal{H}$, whose poles are at the points
$\lambda^{\perp}$, and at each point $\lambda^{\perp}=\sigma$ the singularity is
\[\frac{i}{(4\pi)^{m+1}}\sum_{\alpha}a_{\alpha
J_{\sigma}}\alpha^{m}\frac{m!(2it)^{m+1}}{(\tau-\sigma)^{
m+1}(\tau-\overline{\sigma})^{m+1}}.\] \label{prinmer}
\end{defn}

We remark that while the (principal) divisor of a rational function depends also
on the zeros of the function, our (strongly) principal $m$-divisors involve only
the singularities of the function. Indeed, when comparing the case $m=0$ here
with \cite{[B2]}, the theta lift $\Phi$ from \cite{[B1]} (considered in
Definition \ref{pringen}) is (roughly) the logarithm of the absolute value of
the Borcherds product $\Psi$, and for $b_{-}=1$ Definition \ref{prinmer}
considers the singularities of $\delta_{0}\Phi=\frac{1}{2\pi
i}\frac{\partial\Phi}{\partial\tau}\sim\frac{\Psi'}{\Psi}$, the logarithmic
derivative of $\Psi$ (see also the remarks at the end of this Section). The
algebraicity constraint corresponds to the fact that the Fourier coefficients of
negative indices of the weakly holomorphic modular form appearing in Theorem
13.3 of \cite{[B1]} are required to be integral.

A subtle point in Definitions \ref{pringen} and \ref{prinmer} is dealt with in
the following
\begin{lem}
If the two terms $a_{\lambda}\lambda^{m}\otimes\lambda^{\perp}$ and
$a_{\mu}\mu^{m}\otimes\mu^{\perp}$ coincide then so are the corresponding
singularities in Definitions \ref{pringen} and \ref{prinmer}. \label{welldef}
\end{lem}

\begin{proof}
The assumption of the lemma is equivalent to $\lambda=x\mu$ and
$a_{\mu}=x^{m}a_{\lambda}$ for some real $x\neq0$. As
$\big(\frac{\lambda^{2}}{2}\big)^{m-k}\cdot\frac{(\lambda,\overline
{Z_{v,V}})^{k}}{(\lambda,Z_{v,V})^{m-k}}$ equals
$x^{m}\big(\frac{\mu^{2}}{2}\big)^{m-k}\cdot\frac{(\mu,\overline{Z_{v,V}})^{k}}{
(\mu,Z_{v,V})^{m-k}}$ for every $0 \leq k \leq m$ in this case (and the
difference in the logarithm gives a smooth function of $Z$), the assertion holds
for Definition \ref{pringen}. For Definition \ref{prinmer}, with $b_{-}=1$, we
have $\lambda=\alpha J_{\sigma}$ for some $\alpha\in\mathbb{R}$ and
$\sigma\in\mathcal{H}$. Then $\mu=\frac{\alpha}{x}J_{\sigma}$, and the assertion
follows from the equality $\alpha^{m}=x^{m}\big(\frac{\alpha}{x}\big)^{m}$. This
proves the lemma.
\end{proof}

\smallskip

Given $\gamma \in M^{*}/M$ and $n\in\mathbb{Q}$, let
$y_{n,\gamma}^{(m)}=\sum_{\lambda \in
L+\gamma,\lambda^{2}=-2n}\lambda^{m}\otimes\lambda^{\perp}$. It is trivial if
$n\not\equiv\gamma^{2}/2(\mathrm{mod\ }\mathbb{Z})$, and also if $2\gamma=0$ in
$L^{*}/L$ and $m$ is odd. In particular, if $L^{*}/L$ has exponent 2 (or 1) then
$y_{n,\gamma}^{(m)}=0$ for all $n$ and $\gamma$ if $m$ is odd. The group
$Heeg^{(m)}(X(\Gamma))$ of \emph{Heegner $m$-divisors} is the free Abelian group
of $m$-divisors which is generated by the $m$-divisors $y_{n,\gamma}^{(m)}$. Let
$PrinHeeg^{(m)}_{st}(X(\Gamma))$ be the subgroup of $Heeg^{(m)}(X(\Gamma))$
consisting of those Heegner $m$-divisors which are strongly principal. If
$b_{-}=1$ then we denote the group of principal Heegner $m$-divisors by
$PrinHeeg^{(m)}(X(\Gamma))$. The spaces $Heeg^{(m)}_{st}Cl(X(\Gamma))$ and
$Heeg^{(m)}Cl(X(\Gamma))$ of \emph{(strong) Heegner $m$-divisor classes on
$X(\Gamma)$} are the corresponding quotient groups. We denote the image of
$y_{n,\gamma}^{(m)}$ in these quotients by $y_{n,\gamma}^{(m)}$ as well.

We can now state and prove the main result of this paper.

\begin{thm}
The formal power series $\sum_{n,\gamma}y_{n,\gamma}^{(m)}q^{n}e_{\gamma}$ is a
modular form of weight $1+\frac{b_{-}}{2}+m$ and representation $\rho_{L}^{*}$ with respect to $Mp_{2}(\mathbb{Z})$, with coefficients in the finite-dimensional space $Heeg^{(m)}_{st}Cl(X(\Gamma))\otimes\mathbb{Q}$. \label{main}
\end{thm}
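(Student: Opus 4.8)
The overall strategy is to apply Serre duality to reduce modularity of the Heegner $m$-divisor generating series to a collection of linear relations among the classes $y_{n,\gamma}^{(m)}$, and then to verify each such relation by constructing an explicit strongly principal $m$-divisor via the theta lift of Theorem~\ref{PhiLmm0}. Let me describe the steps.

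First I would invoke the standard Serre-duality criterion for modularity of a $q$-series with coefficients in a vector space (here $Heeg^{(m)}_{st}Cl(X(\Gamma))\otimes\mathbb{Q}$). The formal series $\sum_{n,\gamma}y_{n,\gamma}^{(m)}q^{n}e_{\gamma}$ is a modular form of weight $1+\frac{b_{-}}{2}+m$ and representation $\rho_{L}^{*}$ precisely when, for \emph{every} weakly holomorphic modular form $f=\sum_{\gamma,n}c_{\gamma,n}q^{n}e_{\gamma}$ of the dual weight $1-\frac{b_{-}}{2}-m$ and representation $\rho_{L}$, the pairing of principal parts vanishes in the class group; concretely, the relation
\[
\sum_{\gamma,n<0}c_{\gamma,-n}\,y_{n,\gamma}^{(m)} = 0 \quad\text{in}\quad Heeg^{(m)}_{st}Cl(X(\Gamma))\otimes\mathbb{Q}
\]
must hold for each such $f$. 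This is the usual ``obstruction'' reformulation: the coefficients of a would-be modular form are constrained exactly by the principal parts of forms of complementary weight. So the entire theorem reduces to producing, for each weakly holomorphic $f$ of weight $1-\frac{b_{-}}{2}-m$, a strongly principal $m$-divisor whose underlying combination of $y_{n,\gamma}^{(m)}$ is the one appearing above.

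Second, and this is the heart of the argument, I would feed such an $f$ into the construction $F=\delta_{1-\frac{b_{-}}{2}-m}^{m}f$ of Equation~\eqref{coeff} and form the theta lift $\frac{i^{m}}{2}\Phi_{L,m,m,0}(v,F)$. Theorem~\ref{PhiLmm0} tells us two crucial things: this lift is an algebraic automorphic form of weight $m$ on $G(L_{\mathbb{R}})$ which is an eigenfunction of $\Delta^{G}_{m}$ with eigenvalue $2mb_{-}$, and its singularity along each $\lambda^{\perp}$ (for negative-norm $\lambda$) is given by \emph{exactly} the expression appearing in Definition~\ref{pringen}, with $a_{\lambda}$ read off as the Fourier coefficient $c_{\alpha\lambda,\alpha^{2}\lambda^{2}/2}$. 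In other words, the theta lift is by design the algebraic automorphic form that witnesses the strong principality of the $m$-divisor $\sum_{\lambda}a_{\lambda}\lambda^{m}\otimes\lambda^{\perp}$ attached to $f$. Matching the summation over $\lambda\in L+\gamma$ with $\lambda^{2}=-2n$ against the definition of $y_{n,\gamma}^{(m)}$, and collecting the coefficients $c_{\gamma,-n}$, identifies this divisor with precisely the combination $\sum_{\gamma,n<0}c_{\gamma,-n}\,y_{n,\gamma}^{(m)}$. Hence that combination is strongly principal, i.e.\ zero in the class group, which is the required relation.

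Third, I would handle the two technical caveats. The \emph{algebraicity} of the coefficients $A_{k,C,\rho}$ (needed so that $\Phi$ qualifies as an algebraic automorphic form in Definition~\ref{pringen}) is exactly the content of the last assertion of Theorem~\ref{PhiLmm0}; when $L$ contains no norm-zero vector one must instead pass to the two auxiliary lattices $M,N$ of signature $(2,b_{-}+24)$ and present $\Phi$ as a difference of restrictions of algebraic automorphic forms minus singularities, as flagged in the discussion preceding Definition~\ref{pringen}. I would also need to confirm finite-dimensionality of $Heeg^{(m)}_{st}Cl(X(\Gamma))\otimes\mathbb{Q}$, which follows because the relations produced exhaust the Serre-duality obstructions and the space of Heegner $m$-divisors modulo them is dual to a finite-dimensional space of cusp forms of weight $1+\frac{b_{-}}{2}+m$.

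\textbf{Main obstacle.} The genuinely delicate step is the precise bookkeeping that the singularity computed in Theorem~\ref{PhiLmm0} matches Definition~\ref{pringen} \emph{coefficient for coefficient}, including the factors $\prod_{r=0}^{k-1}(r+\frac{b_{-}}{2})$ and the $\frac{1}{m-k}$ weights, and that summing over all $\alpha$ with $\alpha\lambda\in L^{*}$ reassembles exactly into the integer combination $\sum c_{\gamma,-n}y_{n,\gamma}^{(m)}$ rather than a rational multiple thereof. A second subtlety is that Serre duality gives relations valid after tensoring with $\mathbb{Q}$, so I must be careful that the theta-lift argument produces the relation with the correct rational (ideally integral) normalization; this is where the explicit constant $\frac{i^{m}}{2}$ and the normalization of $y_{n,\gamma}^{(m)}$ as $\sum_{\lambda^{2}=-2n}\lambda^{m}\otimes\lambda^{\perp}$ must be reconciled. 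I expect that the algebraicity/embedding argument for lattices without norm-zero vectors is the most technically demanding but conceptually routine, following Section~8 of \cite{[B1]} as adapted in the remarks after Theorem~\ref{PhiLmm0}.
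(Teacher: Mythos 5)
Your proposal is correct and follows essentially the same route as the paper: Serre duality (Theorem 3.1 of \cite{[B2]}) reduces modularity to relations among the $y_{n,\gamma}^{(m)}$, each relation is witnessed by the strongly principal $m$-divisor coming from the theta lift $\frac{i^{m}}{2}\Phi_{L,m,m,0}(v,\delta_{1-\frac{b_{-}}{2}-m}^{m}f)$ of Theorem \ref{PhiLmm0}, whose singularities match Definition \ref{pringen} with $a_{\lambda}=c_{\lambda,\lambda^{2}/2}$, and finite-dimensionality of the class group follows from surjectivity of the map from singular parts modulo those of weakly holomorphic forms (Lemmas 4.2--4.4 of \cite{[B2]}). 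The normalization worry you flag at the end is resolved in the paper exactly as you anticipate, via McGraw's result \cite{[McG]} that the space of weakly holomorphic forms has a basis with integral Fourier coefficients, which lets one clear denominators and work rationally throughout.
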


\begin{proof}
The proof follows \cite{[B2]}. Consider the space of weight $k$ modular forms
with representation $\rho_{L}^{*}$ (as a finite-dimensional subspace of the
space of power series), and the space of singular parts of weakly holomorphic
modular forms of weight $2-k$ and representation $\rho_{L}$ (which is a subspace
of finite codimension of the space of singular parts). By Theorem 3.1 of
\cite{[B2]}, these two spaces are the full perpendicular spaces of one another
for any half-integral $k$. Let $f$ be a weakly holomorphic modular form of
weight $1-\frac{b_{-}}{2}-m$ and representation $\rho_{L}$ with respect to
$Mp_{2}(\mathbb{Z})$. Expand $f$ as in Equation \eqref{Fourier}. Then Theorem
\ref{PhiLmm0} shows that the singularities of the theta lift
$\frac{i^{m}}{2}\Phi_{L,m,m,0}(v,F)$ of $F=\delta_{1-\frac{b_{-}}{2}-m}^{m}f$
are along the divisors $\lambda^{\perp}$ with $\lambda \in L^{*}$, where
every $\lambda \in L^{*}$ contributes a singularity along $\lambda^{\perp}$ which takes the form appearing in Definition \ref{pringen} with $a_{\lambda}=c_{\lambda,\frac{\lambda^{2}}{2}}$. If we assume that $f$ has algebraic Fourier coefficients, then it follows from the description of the coefficients $A_{k,C,\rho}$ that the automorphic form $\frac{i^{m}}{2}\Phi_{L,m,m,0}(v,F)$ is algebraic. Assuming that the Fourier coefficients $c_{n,\gamma}$ with $n<0$ are integral, this implies that the $m$-divisor $\sum_{n>0,\gamma}c_{-n,\gamma}y_{n,\gamma}^{(m)}$ is strongly principal, hence it vanishes in $Heeg^{(m)}_{st}Cl(X(\Gamma))$. Let $\xi^{(m)}$ be the map which takes a ``singular part'' $\sum_{n<0,\gamma}a_{\gamma,n}q^{n}e_{\gamma}$ to the element
$\sum_{n>0,\gamma}a_{\gamma,-n}y_{n,\gamma}^{(m)}$ in the class group
$Heeg^{(m)}_{st}Cl(X(\Gamma))$. The map $\xi^{(m)}$ is surjective, and since the
space of weakly holomorphic modular forms of any weight and representation
$\rho_{L}$ has a basis consisting of modular forms with integral Fourier
coefficients (see \cite{[McG]}), $\xi^{(m)}$ factors through the quotient of the
space of singular parts by singular parts of weakly holomorphic modular forms.
By Lemmas 4.2, 4.3, and 4.4 of \cite{[B2]} this quotient space is
finite-dimensional. Hence $\dim Heeg^{(m)}_{st}Cl(X(\Gamma))<\infty$ as well.
Thus the Serre duality pairing extends to the case where the space of singular
parts and the image of the pairing are tensored with
$Heeg^{(m)}_{st}Cl(X(\Gamma))$, and continues to be non-degenerate.

Now, for an algebraic modular form $f$ as above, the vanishing expression
$\sum_{n>0,\gamma}c_{-n,\gamma}y_{n,\gamma}^{(m)}$ is obtained as the Serre
duality pairing of the formal power series
$\sum_{n,\gamma}y_{n,\gamma}^{(m)}q^{n}e_{\gamma}$ with the singular part
of $f$. Hence this power series is perpendicular to all the singular parts of
weakly holomorphic meromorphic modular form of weight $1-\frac{b_{-}}{2}-m$ and
representation $\rho_{L}$ with respect to $Mp_{2}(\mathbb{Z})$. By Lemma 4.3 of
\cite{[B2]} and the result of \cite{[McG]}, the formal power series in question
lies in the space of (holomorphic) modular forms of weight $1+\frac{b_{-}}{2}+m$ and representation $\rho_{L}^{*}$ tensored with
$Heeg^{(m)}_{st}Cl(X_{\Gamma})\otimes{\mathbb{C}}$. The fact that the Serre
duality pairing and the spaces we consider are defined over $\mathbb{Q}$
implies the ``algebraicity'' of
$\sum_{n,\gamma}y_{n,\gamma}^{(m)}q^{n}e_{\gamma}$ as a modular
form with coefficients in $Heeg^{(m)}Cl(X_{\Gamma})\otimes\mathbb{Q}$. This
proves the theorem.
\end{proof}

In the case $b_{-}=1$ we obtain an assertion like that of Theorem \ref{main} but
with coefficients in $Heeg^{(m)}Cl(X(\Gamma))\otimes\mathbb{Q}$ (no subscript
$st$). This follows either by omitting this subscript and replacing Definition
\ref{pringen} by Definition \ref{prinmer} in the proof of Theorem \ref{main},
or using the fact that if every strongly principal Heegner $m$-divisor is
principal. Indeed, if $\Phi$ is an automorphic form which shows that some
$m$-divisor is strongly principal, then $\delta_{2m}\Phi$ reveals the
principality of this $m$-divisor. Whether or not the notions of principality and
strong principality are equivalent in the case $b_{-}=1$ remains an interesting
question. We also remark that unlike the result of \cite{[B2]}, here the
``constant term'' $y_{0,0}$ does not show up. Thus our expression is in fact a
``cusp form''.

\medskip

Let us give an example of an automorphic form yielding the principality of a
Heegner $m$-divisor. This also illustrates Corollary \ref{Shimcor} above. Take
$N=1$ in the example preceding Corollary \ref{Shimcor}, and let $m$ be even.
Theorem \ref{main} implies that the generating series of the Heegner
$m$-divisors is a cusp form of weight $\frac{3}{2}+m$ and representation
$\rho_{L}^{*}$. Equivalently, it lies in the Kohnen plus-space of cusp forms of
weight $\frac{3}{2}+m$ on $\Gamma_{0}(4)$ (see \cite{[EZ]} or \cite{[K]}). This
space is isomorphic to the space of cusp forms of weight $2m+2$ on
$SL_{2}(\mathbb{Z})$ by Theorem 1 of \cite{[K]}. Hence this series may not
vanish only if $m\geq8$. We take $m=2$, and present the function yielding the
principality $y_{\frac{3}{4},1}^{(2)}$. Define
\[\theta(\tau)=\sum_{n\in\mathbb{Z}}q^{n^{2}}\qquad\mathrm{and}\qquad
H_{\frac{5}{2}}(\tau)=120\sum_{n=0}^{\infty}H(2,n)q^{n}\] as in \cite{[K]} (here $H(2,n)$ is the function introduced in \cite{[Co]}, and $H_{\frac{5}{2}}$ is normalized to attain 1 at $\infty$ and to have integer coefficients). Let $E_{l}$ be the classical (normalized) Eisenstein series of even weight $l\geq4$ on $SL_{2}(\mathbb{Z})$, and let $\Delta$ be the classical cusp form of weight 12. The weight $-\frac{3}{2}$ weakly holomorphic modular form $f$ with
representation $\rho_{L}$ having the appropriate principal part corresponds to
the weakly holomorphic modular form
\[g(\tau)=\frac{E_{10}(4\tau)\theta(\tau)-E_{8}(4\tau)H_{\frac{5}{2}}(\tau)}{
\Delta(4\tau)}=\]
\[=q^{-3}-56+384q-15024q^{4}+39933q^{5}-523584q^{8}+1129856q^{9}+O(q^{12})\] in
the Kohnen plus-space of weight $-\frac{3}{2}$. The lift
$\frac{i^{2}}{2}\delta_{4}\Phi_{L,2,2,0}(v,F)$ of $F=\delta_{-\frac{3}{2}}^{2}f$
is given by the Fourier expansion
\[384q-479232q^{2}+274558464q^{3}-118219210752q^{4}+43867326009600q^{5}+O(q^{6}
)\] around $\infty$. It is a modular form of weight 6 with respect to
$SL_{2}(\mathbb{Z})$, which has a pole of the form
$\frac{18\sqrt{3}/(4\pi)^{3}}{(\tau-\sigma)^{3}(\tau-\overline{\sigma})^{3}}$ at
$\sigma$ the 3rd root of unity in $\mathcal{H}$ (recall that $\beta$ and the
Fourier coefficient are 1 and $\alpha=2t=\sqrt{3}$). These properties determine
$\frac{i^{2}}{2}\delta_{4}\Phi_{L,2,2,0}(v,F)$ as
$\frac{384E_{6}\Delta}{E_{4}^{3}}$, a fact which can also be verified directly
by an explicit evaluation of the Fourier coefficients.

\subsection{Algebaricity and Relations to Other Works}

We now explain why the condition of algebraicity of the automorphic form is
required in the definition of principal $m$-divisors (both in Definition
\ref{pringen} and in Definition \ref{prinmer}). The eigenvalue of the function
$\frac{i^{m}}{2}\Phi_{L,m,m,0}(v,F)$ from Theorem \ref{PhiLmm0} under the action
of $\Delta_{m}^{G}$ is a consequence of the fact that
$F=\delta_{1-\frac{b_{-}}{2}-m}^{m}f$ has a specific eigenvalue under the weight
$1-\frac{b_{-}}{2}+m$ Laplacian on $\mathcal{H}$. Replacing the weakly
holomorphic modular form $f$ by a harmonic weak Maa\ss\ form would yield
$F=\delta_{1-\frac{b_{-}}{2}-m}^{m}f$ with the same eigenvalue. Hence
$\frac{i^{m}}{2}\Phi_{L,m,m,0}(v,F)$ would have the same properties. For further
details on harmonic weak Maa\ss\ forms we refer the reader to Section 3 of
\cite{[BF]}.

Now, every harmonic weak Maa\ss\ form $f$ of weight $l$ and representation
$\rho_{L}$ with respect to $Mp_{2}(\mathbb{Z})$ decomposes as the sum of a
holomorphic part (which yields a similar expression in the theta lift) and a
non-holomorphic part. The non-holomorphic part consists of ``constant terms''
multiplying a power of $y$ and expressions which are based on incomplete gamma
functions. Hence the part of $F=\delta_{l}^{m}f$ arising from the
non-holomorphic part of $f$ is again a power of $y$, plus the complex conjugate
of an almost holomorphic function, plus similar incomplete gamma functions. For
$l<1-m$ the map $\delta_{l}^{m}f$ is invertible (its inverse is a constant
multiple of the $m$th power of the weight lowering operator $L$). Thus every
modular form $F$ of weight $l+2m$ and representation $\rho_{L}$ with respect to
$Mp_{2}(\mathbb{Z})$ having eigenvalue $m(l+m-1)$ is $\delta_{l}^{m}f$ for some
harmonic weak Maa\ss\ form $f$. Take $l=1-\frac{b_{-}}{2}-m$, and consider first
the case where the operator $\xi_{l}$ of $\cite{[BF]}$ takes $f$ to a weakly
holomorphic modular form which is not holomorphic at the cusp. An argument
similar to Section 6 of \cite{[B1]} (or Theorem \ref{sing}) shows that the
corresponding theta lift $\frac{i^{m}}{2}\Phi_{L,m,m,0}(v,F)$ from Theorem
\ref{PhiLmm0} has, apart from the usual singularities, also singularities along
the sub-Grassmannians of the form $\{v \in G(L_{\mathbb{R}})|\lambda \in
v_{+}\}$ for $\lambda \in L^{*}$ with positive norm (these are $\lambda^{\perp}$
on $G(L_{\mathbb{R}}(-1)) \cong G(L_{\mathbb{R}})$). In the case $b_{-}=1$ these
lifts may be related to (non-harmonic) locally Maa\ss\ forms as defined in
\cite{[BKK]}, and their $\delta_{2m}$-images are probably examples of the
locally harmonic Maa\ss\ forms considered in that reference. In any case, lifts
of such forms do not yield additional principal Heegner $m$-divisors.

Consider now the case where $\xi_{1-\frac{b_{-}}{2}-m}f$ is holomorphic at the
cusp. The singularity then arises only from the holomorphic part of $f$. Now, if
$L$ contains a norm zero vector $z$ then the Fourier expansion of
$\frac{i^{m}}{2}\Phi_{L,m,m,0}(v,F)$ contains, apart from the expression from
Theorem \ref{PhiLmm0}, terms involving Bessel $K$-functions evaluated at $2\pi
n|Y|\cdot|\eta_{w_{-}}|=2\pi n\sqrt{(\eta,Y)^{2}-Y^{2}\eta^{2}}>0$ (the strict
positivity follows from the condition on $\xi_{1-\frac{b_{-}}{2}-m}f$, since
only elements with $\eta^{2}\leq0$ contribute in the non-holomorphic part of
$f$). Hence one may distinguish, for $b_{-}>1$, the theta lifts arising from
weakly holomorphic modular forms from those in which
$\xi_{1-\frac{b_{-}}{2}-m}f\neq0$ by the fact that the former do not include
terms of the form
$\mathbf{e}\big((\rho,X)+i\sqrt{(\rho,Y)^{2}-Y^{2}\rho^{2}})\big)$. Note that
for $b_{-}=2$, with $(\rho,Z)=\rho_{1}\sigma+\rho_{2}\tau$, the latter
expression is either $\mathbf{e}(\rho_{1}\sigma+\rho_{2}\overline{\tau})$ or
$\mathbf{e}(\rho_{1}\overline{\sigma}+\rho_{2}\tau)$. These expressions may thus
be related to the class map defined in Section 5 of \cite{[Bru]} in terms of the
operator $\partial\overline{\partial}$. On the other hand, in case $b_{-}=1$ the
lattice $K$ is positive definite. Hence only the holomorphic part of $f$
contributes to the theta lift. As any principal part at $\infty$ can be obtained
as the principal part of a harmonic weak Maa\ss\ form of every weight (see
Proposition 3.11 of \cite{[BF]}), omitting the algebraicity condition in
Definitions \ref{pringen} and \ref{prinmer} would reduce the the (strong)
Heegner $m$-divisor class group (for $b_{-}=1$) to 0. Now, (algebraic) weakly
holomorphic modular forms give rise to algebraic theta lifts, while for harmonic
weak Maa\ss\ forms we pose the following

\begin{conj}
Let $f$ be a harmonic weak Maa\ss\ form of weight $\frac{1}{2}-m$ (with $m>0$)
and representation $\rho_{L}$, and assume that $\xi_{\frac{1}{2}-m}f$ is
holomorphic at $\infty$. Assume further that the principal part of $f$ involves
only Fourier coefficients which are algebraic over $\mathbb{Q}$, or even
integral. Then the condition $\xi_{\frac{1}{2}-m}f\neq0$ implies the existence of some Fourier coefficient of the holomorphic part of $f$ which is transcendental over $\mathbb{Q}$. \label{hwmftrans}
\end{conj}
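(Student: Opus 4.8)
The plan is to argue by contraposition: assuming every Fourier coefficient of the holomorphic part $f^{+}$ of $f$ is algebraic, I would try to deduce that the shadow $\xi_{\frac{1}{2}-m}f$ vanishes. Write $f=f^{+}+f^{-}$ as in Section 3 of \cite{[BF]}, so that $s:=\xi_{\frac{1}{2}-m}f$ is a holomorphic modular form of weight $\frac{3}{2}+m$; by hypothesis it is holomorphic at $\infty$, hence lies in $S_{\frac{3}{2}+m}(\rho_{L}^{*})$ (its possible constant term is the one governed by the $y^{b_{-}/2}$-term discussed before Equation \eqref{coeff}). The first reduction exploits that the weight $\frac{1}{2}-m$ is negative: there are no nonzero holomorphic modular forms of this weight and representation, so a harmonic weak Maass form of weight $\frac{1}{2}-m$ is determined \emph{uniquely} by its principal part together with its shadow. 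Fixing a basis $\{g_{j}\}$ of Hecke eigenforms of $S_{\frac{3}{2}+m}$ with algebraic Fourier coefficients (which exists by Shimura's theory) and writing $s=\sum_{j}\lambda_{j}g_{j}$, the conjecture then reduces to showing that if some $\lambda_{j}\neq0$ then the $\lambda_{j}$-dependent contribution to the coefficients of $f^{+}$ is transcendental.

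The main tool is the Bruinier--Funke pairing. For an algebraic cusp form $g=\sum_{\gamma,n}c_{g}(\gamma,n)q^{n}e_{\gamma}\in S_{\frac{3}{2}+m}$ it gives, up to an explicit nonzero rational constant, the identity
\[\big(\,\xi_{\frac{1}{2}-m}f,\,g\,\big)_{\mathrm{Pet}}=\sum_{\gamma,\,n>0}c_{f}^{+}(\gamma,-n)\,c_{g}(\gamma,n),\]
whose right-hand side involves only the \emph{principal part} of $f$. Since the principal part of $f$ is assumed algebraic and the $c_{g}(\gamma,n)$ are algebraic, the right-hand side lies in $\overline{\mathbb{Q}}$. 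Applying this with $g=g_{j}$ and using orthogonality of the eigenbasis yields $\lambda_{j}\,\|g_{j}\|_{\mathrm{Pet}}^{2}\in\overline{\mathbb{Q}}$ for every $j$. Thus every nonzero coefficient $\lambda_{j}$ is an algebraic multiple of $\|g_{j}\|_{\mathrm{Pet}}^{-2}$, and the Petersson norm $\|g_{j}\|_{\mathrm{Pet}}^{2}$ is, via the Rankin--Selberg method and Waldspurger's theorem, a period attached to the corresponding integral-weight eigenform of weight $2m+2$.

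To finish, I would combine this with the first reduction: realizing the holomorphic part $\hat g_{j}^{+}$ of the canonical harmonic lift $\hat g_{j}$ of each $g_{j}$ as an explicit Maass--Poincar\'{e} series, whose coefficients $c_{\hat g_{j}}^{+}(\gamma,n)$ are the usual Kloosterman--Bessel sums, the assumed algebraicity of $f^{+}$ forces the combination $\sum_{j}\lambda_{j}\,c_{\hat g_{j}}^{+}(\gamma,n)$ to lie in $\overline{\mathbb{Q}}$ for every $(\gamma,n)$. The conjecture would then follow from a transcendence--independence statement: that the Kloosterman--Bessel coefficients $c_{\hat g_{j}}^{+}(\gamma,n)$, together with the periods $\|g_{j}\|_{\mathrm{Pet}}^{2}$ entering through the $\lambda_{j}$, admit no such $\overline{\mathbb{Q}}$-linear relations unless all $\lambda_{j}$ vanish. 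This is precisely where the difficulty lies, and is the main obstacle: it requires genuine transcendence input---at the very least the transcendence of the Petersson norms (equivalently of the relevant critical $L$-values), and beyond that their algebraic independence from the explicit mock coefficients---which is well beyond current transcendence technology. This is why the statement can only be posed as a conjecture.

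Finally, I would stress that the one structural shortcut, namely detecting the shadow directly through the theta lift, is unavailable here: as explained in the discussion preceding the conjecture, for $b_{-}=1$ the positive-definiteness of $K$ means the non-holomorphic part of $f$ contributes nothing to the Fourier expansion of $\frac{i^{m}}{2}\Phi_{L,m,m,0}(v,F)$ (in contrast to $b_{-}>1$, where it surfaces through the exponentials $\mathbf{e}\big((\rho,X)+\frac{(\rho,\sqrt{(\eta,Y)^{2}-Y^{2}\eta^{2}})}{|Y|}\big)$), so algebraicity of the lift in Theorem \ref{b-=1mer} cannot see $\xi_{\frac{1}{2}-m}f$, and one is forced back onto the transcendence argument above.
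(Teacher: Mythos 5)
You have not proved the statement, but neither does the paper: this is Conjecture \ref{hwmftrans}, and the paper offers no proof at all, only supporting evidence --- Corollary 1.4 of \cite{[BO]} (and the conjecture preceding it there), together with the remark that for shadow weight $\frac{3}{2}+m\geq\frac{5}{2}$ the Shimura correspondence takes cusp forms to cusp forms, so the Bruinier--Ono mechanism producing \emph{algebraic} mock coefficients (shadows mapping to weight $2$ Eisenstein series) is unavailable. So there is no ``paper proof'' to match your attempt against; the honest assessment is of your reduction on its own terms. That reduction is sound as far as it goes: the Bruinier--Funke pairing of \cite{[BF]} does express $(\xi_{\frac{1}{2}-m}f,g)_{\mathrm{Pet}}$ for cuspidal $g$ purely in terms of the principal part of $f$, so an algebraic principal part forces $\lambda_{j}\|g_{j}\|_{\mathrm{Pet}}^{2}\in\overline{\mathbb{Q}}$; and in negative weight a harmonic weak Maass form is indeed determined by principal part plus shadow, since the difference of two candidates would be a holomorphic form of negative weight. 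But the chain terminates, as you yourself say, in an unproven transcendence--independence assertion about Petersson norms (equivalently, via Waldspurger, critical $L$-values) and the mock coefficients of the Maass--Poincar\'{e} lifts $\hat g_{j}$. That is the genuine gap, and it is exactly the content of the conjecture restated at a finer level --- your write-up correctly diagnoses \emph{why} the statement can only be conjectural, which is consistent with, and somewhat more precise than, the paper's own discussion.

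Two secondary loose ends in the reduction are worth flagging. First, the existence of an \emph{orthogonal} Hecke eigenbasis of $S_{\frac{3}{2}+m}(\rho_{L}^{*})$ with algebraic coefficients in the vector-valued half-integral setting is not automatic from ``Shimura's theory''; you need the rationality result of \cite{[McG]} together with a Hecke theory for $\rho_{L}^{*}$. Second, when you subtract $\sum_{j}\lambda_{j}\hat g_{j}$ from $f$, the principal parts of the $\hat g_{j}$ enter with the (possibly transcendental) weights $\lambda_{j}$, so the weakly holomorphic correction has a $\lambda_{j}$-dependent principal part; since the map from principal parts to holomorphic coefficients of negative-weight weakly holomorphic forms is defined over $\overline{\mathbb{Q}}$ (again by \cite{[McG]} and Serre duality), the relations you are forced into involve $1$, the $\lambda_{j}$, and the products $\lambda_{j}c_{\hat g_{j}}^{+}(\gamma,n)$ jointly, not the clean combination $\sum_{j}\lambda_{j}c_{\hat g_{j}}^{+}(\gamma,n)$ alone --- harmless for the shape of the final transcendence statement, but it should be stated correctly. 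Your closing observation, that for $b_{-}=1$ the shadow is invisible in the theta lift of Theorem \ref{b-=1mer} because $K$ is positive definite, matches the paper's discussion exactly and is precisely the reason the algebraicity condition appears in Definitions \ref{pringen} and \ref{prinmer}.
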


Although we do not investigate the Fourier coefficients of harmonic weak Maa\ss\
forms in this paper, let us indicate what evidence towards Conjecture
\ref{hwmftrans} does exist. First, we mention Corollary 1.4 of \cite{[BO]}, as
well as the conjecture preceding it in that reference. Indeed, in the weight
$\frac{1}{2}$ case considered there one has certain theta series of weight
$\frac{3}{2}$ which map by the Shimura correspondence to Eisenstein series of
weight 2. Harmonic weak Maa\ss\ forms which map to these forms under the
operator $\xi_{\frac{1}{2}}$ are known to have algebraic Fourier coefficients.
Since in higher half-integral weights the Shimura correspondence takes cusp
forms to cusp forms, we expect in Conjecture \ref{hwmftrans} only the weakly
holomorphic modular forms to have algebraic coefficients. In any case, it will
be interesting to find, under Conjecture \ref{hwmftrans}, whether every relation
between the Heegner $m$-divisors in $Heeg^{(m)}_{st}Cl(X(\Gamma))$ (or in
$Heeg^{(m)}Cl(X(\Gamma))$ for $b_{-}=1$) arises from a weakly holomorphic
modular form as we described.

\medskip

Theorems 0.2.1 and 0.3.1 of \cite{[Zh]} (with $k=m+1$) relate the images of
$m$-dimensional CM cycles in the Kuga--Sato variety $W_{2m}$ over modular curves in a certain vector space to modular forms of weight $2m+2$. Such a modular form may be related to the modular form of weight $\frac{3}{2}+m$ obtained from our Theorem \ref{main} by the Shimura--Shintani correspondence. This may also indicate, once a relation between our construction and the one in \cite{[Zh]} is established, that Conjecture \ref{hwmftrans} may be true, since otherwise the power series from Theorem \ref{main} must lie in a strict quotient Hecke module. Furthermore, \cite{[H]} defines, for any $m$, a map on the Heegner divisors on $X_{0}(N)$ into some elliptic curve. This reference conjectures that the images of these divisors under this map correspond to the coefficients of a modular form of weight $2m+2$. In some cases this map coincides with the Abel-Jacobi map on the CM cycles into a certain sub-torus of the intermediate Jacobian of the Kuga--Sato variety $W_{2m}$. \cite{[H]} supplies numerical evidence that this is true in some other cases as well.

Now, the results of Section \ref{CMUF} relate the Heegner $m$-divisors in the
case $b_{-}=1$ to symmetric powers of normalized cohomology classes of Heegner
cycles inside Kuga--Sato type varieties. In the case $m=0$ considered in
\cite{[B2]}, the automorphic form $\delta_{0}\frac{1}{2}\Phi_{L,0,0,0}$ is
roughly the logarithmic derivative of the Borcherds product $\Psi$. Hence it is
(under some normalization, when $c_{0,0}=0$) a meromorphic modular form of
weight 2 with only simple poles in CM points, and the residues in these poles
are integral. It thus corresponds to a differential of the first kind in the
description of \cite{[Sch]} or Section 3 of \cite{[BO]}. Its algebraicity thus
corresponds to the fact that its residue divisor vanishes in the Jacobian of the
curve $X(\Gamma)$ (being the divisor of the rational function $\Psi$)---see
Theorem 1 of \cite{[Sch]} or Theorem 3.2 of \cite{[BO]}. Returning to the case
of general $m$, we attach the element $M_{\tau}^{m}$, or $\binom{\tau}{1}^{2m}$,
of the local system $Sym^{m}L_{\mathbb{C}}$, to the meromorphic modular form
$\delta_{2m}\frac{i^{m}}{2}\Phi_{L,m,m,0}$. We thus obtain a meromorphic modular
form of weight 2 and representation $V_{2m}$. This corresponds to a meromorphic
differential form of degree $2m+1$ on (the Shimura curve analog of) $W_{2m}$.
Pairing this modular form with $\lambda^{m}\otimes\lambda^{\perp}$ for
$\lambda=\alpha J_{\sigma}$ gives the function
$\delta_{2m}\frac{(i\alpha)^{m}}{2t^{m}}\Phi_{L,m,m,0}(\tau)(\tau-\sigma)^{m}
(\tau-\overline{\sigma})^{m}$ (up to normalization), whose pole at $\sigma$ is
simple. Moreover, if we assume that $f$ has integral Fourier coefficients, then
the residues are integral. In addition, it seems that the corresponding
differential of the third kind is canonical in the following sense: The
cohomology group $H^{2m+1}(W_{2m})$ has a component $H^{1}(X,V_{2m})$ of Hodge
weight $(2m+1,0)$ and $(0,2m+1)$ whose holomorphic part consists of
$g\binom{\tau}{1}^{2m}d\tau$ for $g$ a cusp form of weight $2m+2$ with respect
to $\Gamma$. This is shown explicitly for $m=1$ in Section 6 of \cite{[Be]}, and it is not hard to generalize to arbitrary $m$. The $H^{1}(X,V_{2m})$ part of the cohomology class which is Poincar\'{e} dual to a $(2m+1)$-dimensional cycle $\mathcal{Z}$ in $W_{2m}$ equals $g(\tau)\binom{\tau}{1}^{2m}d\tau+\overline{g(\tau)}\binom{\overline{\tau}}{1}^{
2m}d\overline{\tau}$ for some cusp form $g$. We should therefore have the
equality
\[\int_{\mathcal{Z}}\delta_{2m}\frac{i^{m}}{2}\Phi_{L,m,m,0}\binom{\tau}{1}^{2m}
d\tau=\int_{X(\Gamma)}(2iy)^{2m}\delta_{2m}\frac{i^{m}}{2}\Phi_{L,m,m,0}
\overline{g(\tau)}d\tau d\overline{\tau}\] (up to the poles). As the latter form
is exact like in Lemma \ref{intRkL}, the integral reduces to the contribution
from the poles, which also vanishes since $\overline{g}$ is smooth and the
integral over a circle of radius $\varepsilon$ around each pole $\sigma$ gives
an expression which vanishes as $\varepsilon\to0$. This evaluation process is
well-defined up to the location of the poles of
$\delta_{2m}\frac{i^{m}}{2}\Phi_{L,m,m,0}$ with respect to the choice of the
cycle. However, jumping over a pole $\sigma$ changes the value by a totally
imaginary multiple of the residue of
$\delta_{2m}\frac{i^{m}}{2}\Phi_{L,m,m,0}(\tau-\sigma)^{m}(\tau-\overline{\sigma
})^{m}$, which is assumed to be integral. It may be interesting to compare this
evaluation with the regularized integrals from \cite{[BK]}. In any case, if we
could establish some generalization of Theorem 1 of \cite{[Sch]} assuring us
that the algebraicity of this differential form with local coefficients implies
the vanishing of a certain expression inside some generalized Jacobian (up to
torsion), then we would know that the images of our Heegner $m$-divisors in
$Heeg^{(m)}Cl(X(\Gamma))$ are the same as their images in this Jacobian.

The map $\alpha$ of \cite{[H]} also seems to be related to the
$(H^{2m+1,0})^{*}$ part of the intermediate Jacobian of the Kuga--Sato variety
$W_{2m}$. For any CM point $\sigma$, consider the $m$th symmetric power
$z_{\sigma}^{m}$ of the normalized CM cycle corresponding to $\sigma$ in
$A_{\sigma}$. In addition, consider the closure of the $(2m+1)$-dimensional
cycle $[\sigma,\infty) \times z_{\sigma}^{m}$ in the notation of Section 8 of
\cite{[Be]} (in the modular case this is possible) plus some cycle bounding the
fiber over $\infty$. This resembles, in the case $m=1$, the cycle considered in
\cite{[Sc]}, where the latter part is the counterpart of the combination of the
cycles $\Delta_{k}$ from \cite{[Be]}. Then the integral of some form
$g(\tau)(dz_{1} \wedge dz_{2})^{m}d\tau$ of type $(2m+1,0)$ over this cycle
decomposes as in the proof of Theorem 8.5 of \cite{[Be]}. The integral from
$\sigma$ to $\infty$ is of the function
$g(\tau)(\tau-\sigma)^{m}(\tau-\overline{\sigma})^{m}$ (up to some constant).
Since $\sigma$ is a (modular) CM point on $\mathcal{H}$, this gives (under
the correct normalization) the integral considered in \cite{[H]}. The integral
over the cycle in the fiber over $\infty$ probably gives some period of $g$, and
when we focus on one newform $g$ (of weight $2m+2$) this reduces to the image
modulo the lattice in $\mathbb{C}$ appearing in \cite{[H]}. It will be very
interesting to investigate what relations can be established between the
existence of the form $\delta_{2m}\frac{i^{m}}{2}\Phi_{L,m,m,0}$ and the values
of these integrals.

\noindent\textsc{Einstein Institute of Mathematics, the Hebrew University of Jerusalem, Edmund Safra Campus, Jerusalem 91904, Israel}

\noindent E-mail address: zemels@math.huji.ac.il

\end{document}